\title{Computing the symmetric $\mathfrak{gl}_1$-homology}
\author{Laura Marino}
\address{IMJ-PRG, Universit\'{e} Paris Cit\'{e}, France}
\email{\myemail{marino@imj-prg.fr}}
\let\cref\Cref
\crefname{subsection}{section}{sections}
\Crefname{subsection}{Section}{Sections}
\Crefname{enumi}{}{}
\Crefname{equation}{}{}
\definecolor{darkblue}{RGB}{0,0,96}
\definecolor{medblue}{RGB}{0,128,128}
\definecolor{gray}{RGB}{127,127,127}
\definecolor{darkred}{RGB}{160,0,0}
\definecolor{lightyellow}{RGB}{255,255,128}
  \def\unskip{}%
  \def\lambda{lambda}
  \def\Z{Z}
\newcommand{\myemail}[1]{\href{mailto:#1}{#1}}
\newcommand{\qua}{\hskip 0.4em \ignorespaces}
\def\arxiv#1{\relax\ifhmode\unskip\qua\fi
\href{http://arxiv.org/abs/#1}%
{\tt arXiv:\penalty -100\unskip#1}}
\def\ZB#1{\relax\ifhmode\unskip\qua\fi
\href{https://zbmath.org/?q=an:#1}{\tt zb#1}}
\def\xox#1{\csname xx#1\endcsname}
\numberwithin{equation}{section}
\declaretheorem[numberwithin=section,name={Lemma}]{lem}
\newtheorem{theorem}[lem]{Theorem}
\newtheorem{prop}[lem]{Proposition}
\newtheorem{conjecture}[lem]{Conjecture}
\theoremstyle{definition}
\newtheorem{dfn}[lem]{Definition}
\newtheorem{remark}[lem]{Remark}
\newtheorem{example}[lem]{Example}
\DeclareMathAlphabet{\mathpzc}{OT1}{pzc}{m}{it}
\newcommand{\N}{\mathbb{N}}
\newcommand{\Z}{\mathbb{Z}}
\newcommand{\Q}{\mathbb{Q}}
\newcommand{\rk}{\operatorname{rk}}
\definecolor{annotation}{rgb}{.2,.7,.2}
\definecolor{firstmap}{rgb}{0,0,1}
\definecolor{secondmap}{rgb}{.7,.7,1}
\definecolor{homotopy}{rgb}{1,.1,.1}
\renewcommand*{\backrefalt}[4]{%
\tiny
\ifcase #1 %
No citations.%
\or
Cited on page~#2.%
\else
Cited on pages~#2.%
\fi
}
\begin{document}
\begin{abstract}
The symmetric $\mathfrak{gl}_n$-homologies, introduced by Robert and Wagner, provide a categorification of the Reshetikhin--Turaev invariants corresponding to symmetric powers of the standard representation of quantum $\mathfrak{gl}_n$. Unlike in the exterior setting, these homologies are already non-trivial when $n=1$. Moreover, in this case, their construction can be greatly simplified.
Our first aim is giving a down-to-earth description of the non-equivariant symmetric $\mathfrak{gl}_1$-homology, together with relations that hold in this setting. We then find a basis for the state spaces of graphs, and use it to construct an algorithm and a program computing the invariant for uncolored links. 
\end{abstract}

\maketitle
    
\section{Introduction}

In the last 20 years, several categorifications of the $\mathfrak{gl}_n$-polynomials have emerged, declined in a variety of flavours: see, for instance, \cite{KH1, khr1, zbMATH05530200, QuefRose} and \cite{zbMATH06893251} for an overview. The $\mathfrak{gl}_n$-polynomials constitute a particular subset of the Reshetikhin--Turaev invariants, those corresponding to exterior powers of the standard representation of $U_q(\mathfrak{gl}_n)$, and so their categorifications, which are (almost) all isomorphic, will be denoted here by \emph{exterior $\mathfrak{gl}_n$-homologies}. Parallel to these, categorifications of the Reshetikhin--Turaev invariants arising from symmetric powers of the standard representation of $U_q(\mathfrak{gl}_n)$ have started to gain interest in recent years (see Cooper--Krushkal \cite{cooperkrushkal} and, later, Cautis \cite{zbMATH06791663}). 
The categorification due to Cautis, in particular, specifically applies to the symmetric case, and categorifies the \emph{symmetric} MOY calculus \cite{QuefRose, QRS}.
Building on Cautis' construction and on the work of Queffelec--Rose--Sartori \cite{QRS}, Robert and Wagner introduced in 2019 a new such categorification, referred to as the \emph{symmetric $\mathfrak{gl}_n$-homologies} (\cite{zbMATH07206869}). 

\bigskip

Although the proof that they are indeed link invariants is algebraically involved, and requires redefining them in terms of Soergel bimodules and Hochschild homology, the definition of the symmetric $\mathfrak{gl}_n$-homologies is completely combinatorial and geometric in nature. The construction, in its full generality, is done in an equivariant setting and for colored links, and is defined over the rationals. It relies on the universal construction of \cite{zbMATH00863709} to define a monoidal functor $\mathscr{S}_n$ from a category of \emph{vinyl graphs} (plane, trivalent graphs with a labeling on their edges, see \cref{def:vinyl}) and \emph{foams} to one of modules over a ring.
Graphs and foams constitute respectively the objects and morphisms of the \emph{hypercube of resolutions} obtained from a braid diagram of a link, and the functor $\mathscr{S}_n$, followed by a flattening, turns this hypercube into the symmetric $\mathfrak{gl}_n$-chain complex. The functor yields, in particular, a categorification of the symmetric MOY calculus (see \cite{zbMATH07206869}).
The image of a vinyl graph $\Gamma$ under $\mathscr{S}_n$ must thus have finite rank, and this is achieved by quotienting by a family of relations, given implicitly by an evaluation $\llangle \cdot \rrangle$ of foams. 

\bigskip

The structure of the symmetric $\mathfrak{gl}_n$-homologies strongly resembles the exterior construction, and it is therefore likely that similar numerical invariants, such as Rasmussen type ones, can be extracted from them. The first step toward achieving this, however, is dealing with the difficulty of carrying out computations both of the symmetric $\mathfrak{gl}_n$-homologies themselves and inside the modules $\mathscr{S}_n(\Gamma)$.
This is partly due to the non-local character of these invariants, which makes it often necessary to work with the entire link diagram, rather than tangles inside of it (as in the exterior case). It is also hard, in general, to explicitly spell out relations that hold in $\mathscr{S}_n(\Gamma)$.
This paper addresses both of these computability issues, for the case $n=1$.
The $\mathfrak{gl}_1$-polynomial $P_1$ is defined by the skein relation 
\[
qP_1(\;\raisebox{-0.41\baselineskip}{\def\svgwidth{12pt}
\begingroup%
  \makeatletter%
  \providecommand\color[2][]{%
    \errmessage{(Inkscape) Color is used for the text in Inkscape, but the package 'color.sty' is not loaded}%
    \renewcommand\color[2][]{}%
  }%
  \providecommand\transparent[1]{%
    \errmessage{(Inkscape) Transparency is used (non-zero) for the text in Inkscape, but the package 'transparent.sty' is not loaded}%
    \renewcommand\transparent[1]{}%
  }%
  \providecommand\rotatebox[2]{#2}%
  \newcommand*\fsize{\dimexpr\f@size pt\relax}%
  \newcommand*\lineheight[1]{\fontsize{\fsize}{#1\fsize}\selectfont}%
  \ifx\svgwidth\undefined%
    \setlength{\unitlength}{59.75938772bp}%
    \ifx\svgscale\undefined%
      \relax%
    \else%
      \setlength{\unitlength}{\unitlength * \real{\svgscale}}%
    \fi%
  \else%
    \setlength{\unitlength}{\svgwidth}%
  \fi%
  \global\let\svgwidth\undefined%
  \global\let\svgscale\undefined%
  \makeatother%
  \begin{picture}(1,1.29229881)%
    \lineheight{1}%
    \setlength\tabcolsep{0pt}%
    \put(0,0){\includegraphics[width=\unitlength,page=1]{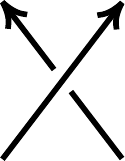}}%
  \end{picture}%
\endgroup%
}\;) - q^{-1}P_1(\;\raisebox{-0.41\baselineskip}{\def\svgwidth{12pt}
\begingroup%
  \makeatletter%
  \providecommand\color[2][]{%
    \errmessage{(Inkscape) Color is used for the text in Inkscape, but the package 'color.sty' is not loaded}%
    \renewcommand\color[2][]{}%
  }%
  \providecommand\transparent[1]{%
    \errmessage{(Inkscape) Transparency is used (non-zero) for the text in Inkscape, but the package 'transparent.sty' is not loaded}%
    \renewcommand\transparent[1]{}%
  }%
  \providecommand\rotatebox[2]{#2}%
  \newcommand*\fsize{\dimexpr\f@size pt\relax}%
  \newcommand*\lineheight[1]{\fontsize{\fsize}{#1\fsize}\selectfont}%
  \ifx\svgwidth\undefined%
    \setlength{\unitlength}{59.75939876bp}%
    \ifx\svgscale\undefined%
      \relax%
    \else%
      \setlength{\unitlength}{\unitlength * \real{\svgscale}}%
    \fi%
  \else%
    \setlength{\unitlength}{\svgwidth}%
  \fi%
  \global\let\svgwidth\undefined%
  \global\let\svgscale\undefined%
  \makeatother%
  \begin{picture}(1,1.29229857)%
    \lineheight{1}%
    \setlength\tabcolsep{0pt}%
    \put(0,0){\includegraphics[width=\unitlength,page=1]{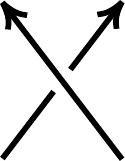}}%
  \end{picture}%
\endgroup%
}\;) = (q-q^{-1})P_1(\;\raisebox{-0.41\baselineskip}{\def\svgwidth{12pt}
\begingroup%
  \makeatletter%
  \providecommand\color[2][]{%
    \errmessage{(Inkscape) Color is used for the text in Inkscape, but the package 'color.sty' is not loaded}%
    \renewcommand\color[2][]{}%
  }%
  \providecommand\transparent[1]{%
    \errmessage{(Inkscape) Transparency is used (non-zero) for the text in Inkscape, but the package 'transparent.sty' is not loaded}%
    \renewcommand\transparent[1]{}%
  }%
  \providecommand\rotatebox[2]{#2}%
  \newcommand*\fsize{\dimexpr\f@size pt\relax}%
  \newcommand*\lineheight[1]{\fontsize{\fsize}{#1\fsize}\selectfont}%
  \ifx\svgwidth\undefined%
    \setlength{\unitlength}{62.04436251bp}%
    \ifx\svgscale\undefined%
      \relax%
    \else%
      \setlength{\unitlength}{\unitlength * \real{\svgscale}}%
    \fi%
  \else%
    \setlength{\unitlength}{\svgwidth}%
  \fi%
  \global\let\svgwidth\undefined%
  \global\let\svgscale\undefined%
  \makeatother%
  \begin{picture}(1,1.25792275)%
    \lineheight{1}%
    \setlength\tabcolsep{0pt}%
    \put(0,0){\includegraphics[width=\unitlength,page=1]{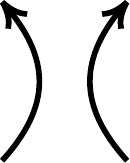}}%
  \end{picture}%
\endgroup%
}\;) \qquad \text{ and } \qquad P_1(\text{unknot})=1.
\]
While one can easily verify that this polynomial is equal to $1$ for all links, its symmetric categorification is far from being trivial, and is therefore the natural starting point.

\bigskip

A combinatorial and foam-free description of the symmetric $\mathfrak{gl}_1$-homology, in the non-equivariant case, is given in Section 2.5 of \cite{zbMATH07632766}. In particular, foams are replaced, in some cases, by \emph{decorations} by symmetric polynomials on the edges of vinyl graphs. In this paper we review this description and list a series of relations that hold in this particular setting.
We aim at providing a down-to-earth treatment of this case, in which computations are more manageable. Our main result is the definition of a basis for $\mathscr{S}_1(\Gamma)$, for all \emph{elementary vinyl graphs} $\Gamma$ (\cref{def:elementary}), that we call the \emph{d.u.r.\ basis} in what follows. It is intended at gaining a better understanding of the modules $\mathscr{S}_1(\Gamma)$, and of the morphisms between such modules. The \emph{d.u.r.\ set} of a vinyl graph $\Gamma$ consists of all possible \textbf{d}ecorations by Schur polynomials on the \textbf{u}pper-\textbf{r}ight edges of split vertices of $\Gamma$ (hence the name d.u.r.): see \cref{def:d.u.r.} and \cref{ex:d.u.r.} for more details.

\begin{equation*}
{\scriptsize \raisebox{-1\baselineskip}{\def\svgwidth{150pt}
\begingroup%
  \makeatletter%
  \providecommand\color[2][]{%
    \errmessage{(Inkscape) Color is used for the text in Inkscape, but the package 'color.sty' is not loaded}%
    \renewcommand\color[2][]{}%
  }%
  \providecommand\transparent[1]{%
    \errmessage{(Inkscape) Transparency is used (non-zero) for the text in Inkscape, but the package 'transparent.sty' is not loaded}%
    \renewcommand\transparent[1]{}%
  }%
  \providecommand\rotatebox[2]{#2}%
  \newcommand*\fsize{\dimexpr\f@size pt\relax}%
  \newcommand*\lineheight[1]{\fontsize{\fsize}{#1\fsize}\selectfont}%
  \ifx\svgwidth\undefined%
    \setlength{\unitlength}{235.41757022bp}%
    \ifx\svgscale\undefined%
      \relax%
    \else%
      \setlength{\unitlength}{\unitlength * \real{\svgscale}}%
    \fi%
  \else%
    \setlength{\unitlength}{\svgwidth}%
  \fi%
  \global\let\svgwidth\undefined%
  \global\let\svgscale\undefined%
  \makeatother%
  \begin{picture}(1,0.25447915)%
    \lineheight{1}%
    \setlength\tabcolsep{0pt}%
    \put(0.45620694,0.00706858){\makebox(0,0)[lt]{\lineheight{1.25}\smash{\begin{tabular}[t]{l}\tiny $a+b$\end{tabular}}}}%
    \put(0.56273155,0.22739194){\makebox(0,0)[lt]{\lineheight{1.25}\smash{\begin{tabular}[t]{l}\tiny $a$\end{tabular}}}}%
    \put(0.41945314,0.22866059){\makebox(0,0)[lt]{\lineheight{1.25}\smash{\begin{tabular}[t]{l}\tiny $b$\end{tabular}}}}%
    \put(0,0){\includegraphics[width=\unitlength,page=1]{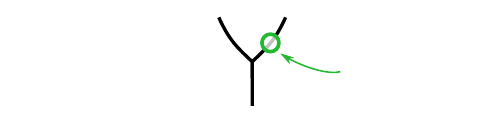}}%
    \put(0.71845261,0.10603913){\color[rgb]{0.14117647,0.7254902,0.19607843}\makebox(0,0)[lt]{\lineheight{1.25}\smash{\begin{tabular}[t]{l}Schur polynomial $s_{\lambda}$\\with $\lambda \in T(a,b)$ \end{tabular}}}}%
    \put(0,0){\includegraphics[width=\unitlength,page=2]{durintro.pdf}}%
    \put(-0.00184181,0.14266018){\makebox(0,0)[lt]{\lineheight{1.25}\smash{\begin{tabular}[t]{l}split vertex of $\Gamma$\end{tabular}}}}%
  \end{picture}%
\endgroup%
}}  
\end{equation*}

\begin{theorem} \label{thm:intro}
The d.u.r.\ set is a basis for the module $\mathscr{S}_1(\Gamma)$, for all elementary vinyl graphs $\Gamma$.    
\end{theorem}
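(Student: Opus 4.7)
The plan is to verify the two defining properties of a basis — spanning and linear independence — for the d.u.r.\ set of $\Gamma$, exploiting both the local relations at split/merge vertices (reviewed earlier in the paper) and the symmetric MOY polynomial description of the graded rank of $\mathscr{S}_1(\Gamma)$.

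For the spanning property, I would rewrite an arbitrary Schur decoration of $\Gamma$ as a linear combination of d.u.r.\ decorations using the listed relations. The key ingredients are a \emph{dot migration} identity at each split vertex — a Schur polynomial $s_\lambda$ on the lower edge (labeled $a+b$) can be decomposed, via the Littlewood--Richardson rule, into a sum of products $s_\mu \otimes s_\nu$ on the upper edges (labeled $b$ and $a$) — together with its analogue at merge vertices. Iterating these, every decoration can be migrated onto the upper-right edges of split vertices. A further \emph{truncation} identity is then used to express $s_\lambda$ on such an edge with $\lambda \notin T(a,b)$ as a linear combination of $s_\mu$'s with $\mu \in T(a,b)$, at the cost of modifying decorations elsewhere in $\Gamma$. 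I would induct on the number of split vertices, with the inner-most split/merge pair serving as the reduction step.

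For linear independence, I would first compute the graded rank of $\mathscr{S}_1(\Gamma)$ via the symmetric MOY polynomial of $\Gamma$: it factors as a product over split vertices of quantum binomial coefficients, whose ungraded specialization matches $\prod_{v}|T(a_v,b_v)|$, that is, the cardinality of the d.u.r.\ set. Once the ranks agree, it suffices to pair each d.u.r.\ element against a suitably chosen ``dual'' foam under the evaluation $\llangle \cdot \rrangle$ and argue that the resulting Gram matrix is triangular with non-vanishing diagonal entries — a fact which should reduce, through local foam manipulations at each split vertex, to the orthogonality of Schur polynomials under the Hall inner product.

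The main obstacle is the truncation identity and its global behavior: controlling the remainder terms it produces, and ensuring that these can themselves be brought to d.u.r.\ form without triggering an infinite descent, is where the induction is most delicate. A secondary difficulty, arising in the independence step, is that the evaluation $\llangle \cdot \rrangle$ is non-local by design, so disentangling its value on ``dual'' pairs of d.u.r.\ foams into a product indexed by the split vertices of $\Gamma$ will require a careful foam-theoretic argument, likely proceeding by cutting $\Gamma$ along circles that isolate one split/merge pair at a time.
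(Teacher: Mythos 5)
Your proposal has two genuine gaps, both at the points you yourself flag as delicate. The first concerns the ``truncation identity'' on which the whole spanning argument rests. In the symmetric $\mathfrak{gl}_1$ theory there is no local relation expressing $s_\lambda$ with $\lambda\notin T(a,b)$ on an upper-right edge as a combination of $s_\mu$ with $\mu\in T(a,b)$: the only degree constraint available is the \emph{global} bound $0\le\deg(d)\le t_\Gamma$ of \cref{rem:degreemaxdecoration}, and the relations defining $\mathscr{S}_1(\Gamma)$ are non-local by construction (this is precisely the computability obstruction the paper is addressing). Relatedly, dot migration \cref{eq:dotmigration} only exchanges decorations between the edge labeled $a+b$ and the \emph{pair} of upper edges; there is no local move taking a decoration on $e^l(v)$ to one on $e^r(v)$, so the claim that every decoration can be migrated onto upper-right edges is unsubstantiated. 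A warning sign is that your argument nowhere uses that $\Gamma$ is elementary, whereas the statement is only claimed (and only proved) for elementary graphs. The second gap is in the independence step: the theorem is proved over $\Z$, and a triangular Gram matrix with non-vanishing diagonal combined with a rank count gives at most a $\Q$-basis --- the d.u.r.\ set could still generate a proper finite-index sublattice. Moreover, the pairing $(\cdot\,,\cdot)_1$ is an alternating sum over global colorings with denominator $Q(\Gamma,c)$, and there is no reason it should factor as a product of Hall inner products indexed by the split vertices.

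The paper takes an entirely different route that avoids both issues: it first proves by induction (\cref{prop:gammaklisdur}) that the specific graphs $\Gamma_{k,\ell}$ are d.u.r., using the categorified MOY isomorphism \cref{eq:id17general} together with \cref{lem:rewritingdots} and an explicit integral change of basis; it then invokes the decomposition result of \cite{BPRW} (\cref{lem:moveselementary}) to relate an arbitrary elementary graph to disjoint unions of $\Theta_\ell$'s by the moves \cref{eq:move1} and \cref{eq:move2}, and checks in \cref{lem:un-bubblepreservesdur}, \cref{lem:id17preservesdur} and \cref{lem:treeisopreservesdur} that each move preserves the property of having the d.u.r.\ set as a $\Z$-basis. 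If you want to salvage your strategy, the missing truncation identity would have to be derived as a consequence of these global isomorphisms rather than assumed as a local relation.
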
 

With some more work, we expect that this result can be extended to general vinyl graphs, but for now it only holds for the graphs appearing in the hypercubes of resolutions of uncolored links. Other than over $\Q$, the d.u.r.\ set turns out to be a basis of $\mathscr{S}_1(\Gamma)$ also as a $\Z$-module. The proof is therefore done in this more general setting. 

\bigskip

Our second result is an algorithm and a computer program, computing the non-equivariant $\mathfrak{gl}_1$-homology of uncolored links. The program is implemented in Python and is effective for links having braid length up to 12 and braid index up to 6. In \cref{sec:computations} we list the Poincar\'e polynomials associated to all prime knots with up to 10 crossings, braid length up to 12 and braid index up to 4, obtained by running our program on the braid representations of knots provided by KnotInfo \cite{chalivingston}.

\bigskip

One of our motivations for implementing this program is investigating the relations between the symmetric $\mathfrak{gl}_n$-homologies and Khovanov--Rozansky's triply graded homology \cite{khr2}, which yields a categorification of the HOMFLYPT polynomial.
Indeed, although the HOMFLYPT polynomial specialises to the Reshetikhin--Turaev invariants, the relations between their categorifications are in general more complex. It is known (\cite{ras4, zbMATH07096528, zbMATH07305663} and \cite{zbMATH07206869}, using results of Cautis \cite{zbMATH06791663}), that there exist spectral sequences from the triply graded homology to both the exterior and the symmetric $\mathfrak{gl}_n$-homologies. In the exterior case, this spectral sequence is known to be non-trivial. We formulate the following conjecture for the symmetric case.

\begin{conjecture}
The total rank of the symmetric $\mathfrak{gl}_1$-homology is equal to the total rank of the reduced triply graded homology.   
\end{conjecture}

More generally, for any $n \geq 1$, we expect the rank of the symmetric $\mathfrak{gl}_n$-homologies to be $n$ times the rank of the reduced triply graded homology.
While several computer programs have been implemented to compute the exterior $\mathfrak{gl}_n$-homologies (see for instance \cite{khoca, khoho}), computing the triply graded homology appears to be a more difficult task, as the complexity grows very quickly. However, advances have been made in \cite{computedW, ras4}, and more recently by Nakagane--Sano in \cite{computHOMFLY}, who compute the reduced triply graded homology of all prime knots with up to 10 crossings and, among the 11 crossings knots, of all those having braid length up to 13. Comparing the results of our program with the computations of Nakagane-Sano \cite{computedHOMFLY}, one verifies that the results indeed coincide for all the knots in \cref{sec:computations}.

\bigskip

Other than over the rationals, our program also computes the symmetric $\mathfrak{gl}_1$-homology of the closure of a braid diagram over $\mathbb{F}_p$, with $p$ prime. Running the program on the closure of the braids
$\sigma_1$, $\sigma_1\sigma_2^{-1}$ and $\sigma_1\sigma_2$,
which are all representatives of the unknot, one obtains three pairwise-distinct results over $\mathbb{F}_3$. This shows, in particular, that the symmetric $\mathfrak{gl}_1$-homology is not a link invariant over $\Z$. 

\subsection*{Outline of the paper} 
The first section is a review of the definition of $\mathscr{S}_1(\Gamma)$, and of all the ingredients involved in the construction of the symmetric $\mathfrak{gl}_1$-homology of \cite{zbMATH07206869}, in the non-equivariant case. As stated above, in this setting, the definitions and main properties of the objects involved in the construction can be rephrased in a more elementary way (see also \cite{zbMATH07632766}). In particular, foams are omitted and replaced, in some cases, by graphs equipped with \emph{decorations} by symmetric polynomials on their edges. We deduce some relations that hold in this setting and give a description of the morphisms used throughout the paper.

In the second section, after defining the d.u.r.\ set of $\mathscr{S}_1(\Gamma)$, we state \cref{thm:intro}, and prove it for a particularly simple case.

The third and fourth sections are concerned with the proof of \cref{thm:intro}: we first prove that the d.u.r.\ set is a basis for a special class of graphs, then show that a series of isomorphisms preserve the d.u.r.\ basis.

The fifth section is a description of an algorithm and a computer program which uses d.u.r.\ bases to compute the symmetric $\mathfrak{gl}_1$-homology of links. The outputs of the program for small knots are listed in the appendix.

\subsection*{Acknowledgments}
I would like to thank my advisor Emmanuel Wagner for guiding me through the symmetric link homologies, and for his precious support and advice. I am very grateful to my advisor Louis-Hadrien Robert for the many suggestions and helpful discussions, and for his invaluable support in programming.
The author has received funding from the European Union's Horizon 2020 research and innovation programme under the Marie Skłodowska-Curie grant agreement No 945332 \includegraphics[width=0.04\textwidth]{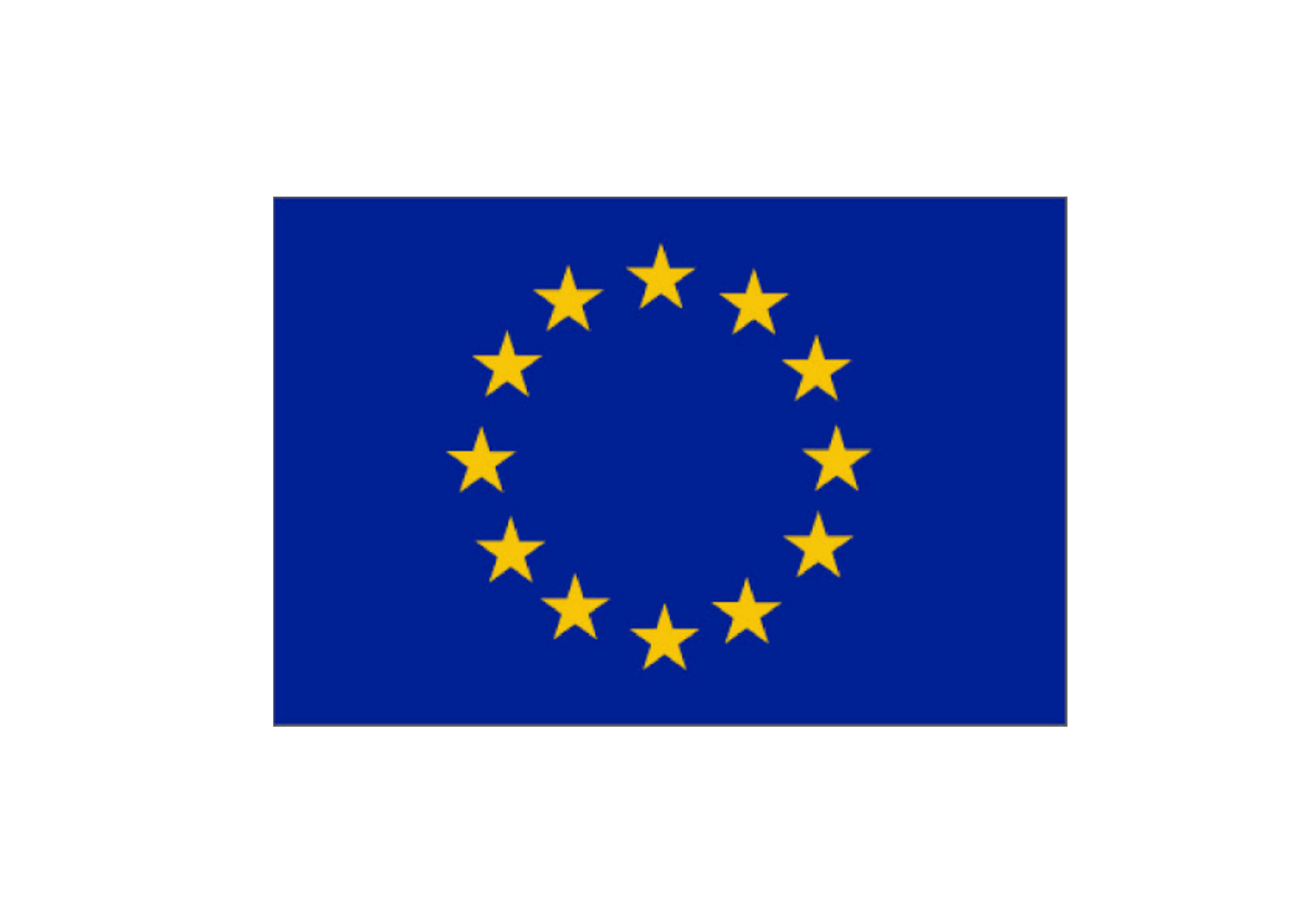}.

\section{The symmetric $\mathfrak{gl}_1$-construction} \label{sec:gl1eval}

In this section we describe the main ingredients of the non-equivariant symmetric $\mathfrak{gl}_1$-homology of links introduced by Robert and Wagner in \cite{zbMATH07206869}. Namely, we will define the graphs $\Gamma$ appearing in the hypercube of resolutions of a colored link, called \emph{vinyl graphs}, the state spaces $\mathscr{S}_1(\Gamma)$ associated to them, and some morphisms between these spaces. We defer the description of the symmetric $\mathfrak{gl}_1$-chain complex and the homology itself until \cref{sec:comp-gll_1-homol}. The construction is done here in a foam-free setting, and holds over $\Q,\,\Z$ and $\mathbb{F}_p$, with $p$ prime. For more details and a general treatment of the subject, as well as proofs of all the statements given in this section, refer to \cite{zbMATH07206869}, \cite{zbMATH07305663}, \cite{zbMATH07632766}, and \cite{rootofunity}.

\bigskip

Given an integer $k\geq 0$, consider the plane, oriented graphs $\rotatebox[origin=c]{180}{Y}^k_i,\, Y^k_i$ and $I^k$, for $i=0,\ldots ,k$, represented below.
\begin{equation*}
\raisebox{-1.2\baselineskip}{\def\svgwidth{360pt}
\begingroup%
  \makeatletter%
  \providecommand\color[2][]{%
    \errmessage{(Inkscape) Color is used for the text in Inkscape, but the package 'color.sty' is not loaded}%
    \renewcommand\color[2][]{}%
  }%
  \providecommand\transparent[1]{%
    \errmessage{(Inkscape) Transparency is used (non-zero) for the text in Inkscape, but the package 'transparent.sty' is not loaded}%
    \renewcommand\transparent[1]{}%
  }%
  \providecommand\rotatebox[2]{#2}%
  \newcommand*\fsize{\dimexpr\f@size pt\relax}%
  \newcommand*\lineheight[1]{\fontsize{\fsize}{#1\fsize}\selectfont}%
  \ifx\svgwidth\undefined%
    \setlength{\unitlength}{324.9370786bp}%
    \ifx\svgscale\undefined%
      \relax%
    \else%
      \setlength{\unitlength}{\unitlength * \real{\svgscale}}%
    \fi%
  \else%
    \setlength{\unitlength}{\svgwidth}%
  \fi%
  \global\let\svgwidth\undefined%
  \global\let\svgscale\undefined%
  \makeatother%
  \begin{picture}(1,0.10506862)%
    \lineheight{1}%
    \setlength\tabcolsep{0pt}%
    \put(-0.00102183,0.06048599){\color[rgb]{0,0,0}\makebox(0,0)[lt]{\lineheight{1.25}\smash{\begin{tabular}[t]{l}$\rotatebox[origin=c]{180}{Y}^k_i=$\end{tabular}}}}%
    \put(0,0){\includegraphics[width=\unitlength,page=1]{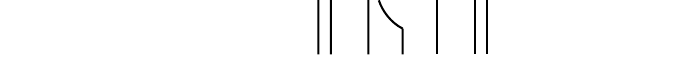}}%
    \put(0.50132862,0.06051375){\color[rgb]{0,0,0}\makebox(0,0)[lt]{\lineheight{1.25}\smash{\begin{tabular}[t]{l}$\cdots$\end{tabular}}}}%
    \put(0.66186696,0.06040985){\color[rgb]{0,0,0}\makebox(0,0)[lt]{\lineheight{1.25}\smash{\begin{tabular}[t]{l}$\cdots$\end{tabular}}}}%
    \put(0,0){\includegraphics[width=\unitlength,page=2]{mergesplit.pdf}}%
    \put(0.10007495,0.06057831){\color[rgb]{0,0,0}\makebox(0,0)[lt]{\lineheight{1.25}\smash{\begin{tabular}[t]{l}$\cdots$\end{tabular}}}}%
    \put(0.26102581,0.06047448){\color[rgb]{0,0,0}\makebox(0,0)[lt]{\lineheight{1.25}\smash{\begin{tabular}[t]{l}$\cdots$\end{tabular}}}}%
    \put(0,0){\includegraphics[width=\unitlength,page=3]{mergesplit.pdf}}%
    \put(0.40533718,0.06053871){\color[rgb]{0,0,0}\makebox(0,0)[lt]{\lineheight{1.25}\smash{\begin{tabular}[t]{l}$Y^k_i=$\end{tabular}}}}%
    \put(0,0){\includegraphics[width=\unitlength,page=4]{mergesplit.pdf}}%
    \put(0.92180723,0.06051368){\color[rgb]{0,0,0}\makebox(0,0)[lt]{\lineheight{1.25}\smash{\begin{tabular}[t]{l}$\cdots$\end{tabular}}}}%
    \put(0.81658335,0.06053864){\color[rgb]{0,0,0}\makebox(0,0)[lt]{\lineheight{1.25}\smash{\begin{tabular}[t]{l}$I^k=$\end{tabular}}}}%
    \put(0,0){\includegraphics[width=\unitlength,page=5]{mergesplit.pdf}}%
    \put(0.93081993,0.00308217){\color[rgb]{0,0.53333333,0.63137255}\makebox(0,0)[lt]{\lineheight{1.25}\smash{\begin{tabular}[t]{l}\tiny $k$\end{tabular}}}}%
    \put(0,0){\includegraphics[width=\unitlength,page=6]{mergesplit.pdf}}%
    \put(0.66780578,0.00266115){\color[rgb]{0,0.53333333,0.63137255}\makebox(0,0)[lt]{\lineheight{1.25}\smash{\begin{tabular}[t]{l}\tiny $k-i$\end{tabular}}}}%
    \put(0,0){\includegraphics[width=\unitlength,page=7]{mergesplit.pdf}}%
    \put(0.50506274,0.00256578){\color[rgb]{0,0.53333333,0.63137255}\makebox(0,0)[lt]{\lineheight{1.25}\smash{\begin{tabular}[t]{l}\tiny $i$\end{tabular}}}}%
    \put(0,0){\includegraphics[width=\unitlength,page=8]{mergesplit.pdf}}%
    \put(0.26617032,0.00256598){\color[rgb]{0,0.53333333,0.63137255}\makebox(0,0)[lt]{\lineheight{1.25}\smash{\begin{tabular}[t]{l}\tiny $k-i$\end{tabular}}}}%
    \put(0,0){\includegraphics[width=\unitlength,page=9]{mergesplit.pdf}}%
    \put(0.10431229,0.00299874){\color[rgb]{0,0.53333333,0.63137255}\makebox(0,0)[lt]{\lineheight{1.25}\smash{\begin{tabular}[t]{l}\tiny $i$\end{tabular}}}}%
  \end{picture}%
\endgroup%
}
\end{equation*}

The trivalent vertices of $\rotatebox[origin=c]{180}{Y}^k_i$ and $Y^k_i$ are called \emph{merge vertex} and \emph{split vertex} respectively. We further denote the two edges departing from a split vertex $v$ on the left and on the right by $e^l(v)$ and $e^r(v)$ respectively.

\begin{dfn} \label{def:vinyl}
A \emph{vinyl graph} $\Gamma$ is a labeled, closed, finite, oriented, trivalent, plane graph $(V(\Gamma),E(\Gamma))$, obtained as follows: 
\begin{enumerate}
    \item Glue on top of each other finitely many graphs of type $\rotatebox[origin=c]{180}{Y}^k_i$, $Y^{k'}_j$ and $I^{k''}$, for any $k,k',k'',i,j$, so that, at the end of this process, the number of split and merge vertices coincides. 
    One is only allowed to glue a graph $\Theta_1$ on top of another graph $\Theta_0$ if the number of strands on the bottom of $\Theta_1$ and on top of $\Theta_0$ coincide. We call the resulting graph $\Gamma_{\text{\tiny open}}$.
    \item $\Gamma$ is obtained by closing up the strands of $\Gamma_{\text{\tiny open}}$ as follows 
    \begin{equation*}
    \Gamma = \quad {\tiny \raisebox{-8\baselineskip}{\def\svgwidth{80pt}
\begingroup%
  \makeatletter%
  \providecommand\color[2][]{%
    \errmessage{(Inkscape) Color is used for the text in Inkscape, but the package 'color.sty' is not loaded}%
    \renewcommand\color[2][]{}%
  }%
  \providecommand\transparent[1]{%
    \errmessage{(Inkscape) Transparency is used (non-zero) for the text in Inkscape, but the package 'transparent.sty' is not loaded}%
    \renewcommand\transparent[1]{}%
  }%
  \providecommand\rotatebox[2]{#2}%
  \newcommand*\fsize{\dimexpr\f@size pt\relax}%
  \newcommand*\lineheight[1]{\fontsize{\fsize}{#1\fsize}\selectfont}%
  \ifx\svgwidth\undefined%
    \setlength{\unitlength}{178.15109263bp}%
    \ifx\svgscale\undefined%
      \relax%
    \else%
      \setlength{\unitlength}{\unitlength * \real{\svgscale}}%
    \fi%
  \else%
    \setlength{\unitlength}{\svgwidth}%
  \fi%
  \global\let\svgwidth\undefined%
  \global\let\svgscale\undefined%
  \makeatother%
  \begin{picture}(1,1.50351855)%
    \lineheight{1}%
    \setlength\tabcolsep{0pt}%
    \put(0,0){\includegraphics[width=\unitlength,page=1]{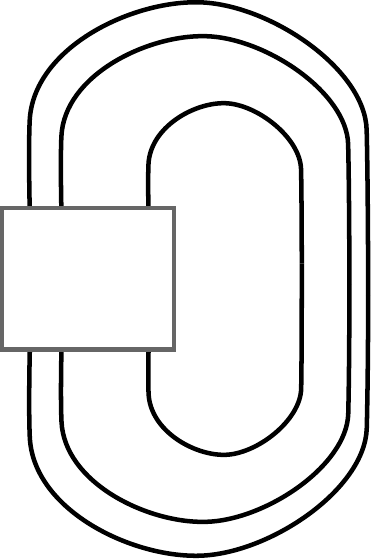}}%
    \put(0.12452252,0.71061793){\color[rgb]{0.4,0.4,0.4}\makebox(0,0)[lt]{\lineheight{1.25}\smash{\begin{tabular}[t]{l}\normalsize $\Gamma_{\text{open}}$\end{tabular}}}}%
    \put(0.82235921,0.73935748){\makebox(0,0)[lt]{\lineheight{1.25}\smash{\begin{tabular}[t]{l}$\ldots$\end{tabular}}}}%
    \put(0.21831028,1.02624051){\makebox(0,0)[lt]{\lineheight{1.25}\smash{\begin{tabular}[t]{l}$\ldots$\end{tabular}}}}%
    \put(0.2119314,0.46610953){\makebox(0,0)[lt]{\lineheight{1.25}\smash{\begin{tabular}[t]{l}$\ldots$\end{tabular}}}}%
  \end{picture}%
\endgroup%
}}
    \end{equation*}
    
    \item The edges of $\Gamma$ are given a labeling by positive integers. Around each vertex, the labeling $\ell\thinspace\colon E(\Gamma) \to \N_{>0}$ is required to follow one of the two models below:
    \begin{equation*}
    {\scriptsize \raisebox{-1.2\baselineskip}{\def\svgwidth{136pt}
\begingroup%
  \makeatletter%
  \providecommand\color[2][]{%
    \errmessage{(Inkscape) Color is used for the text in Inkscape, but the package 'color.sty' is not loaded}%
    \renewcommand\color[2][]{}%
  }%
  \providecommand\transparent[1]{%
    \errmessage{(Inkscape) Transparency is used (non-zero) for the text in Inkscape, but the package 'transparent.sty' is not loaded}%
    \renewcommand\transparent[1]{}%
  }%
  \providecommand\rotatebox[2]{#2}%
  \newcommand*\fsize{\dimexpr\f@size pt\relax}%
  \newcommand*\lineheight[1]{\fontsize{\fsize}{#1\fsize}\selectfont}%
  \ifx\svgwidth\undefined%
    \setlength{\unitlength}{131.02873246bp}%
    \ifx\svgscale\undefined%
      \relax%
    \else%
      \setlength{\unitlength}{\unitlength * \real{\svgscale}}%
    \fi%
  \else%
    \setlength{\unitlength}{\svgwidth}%
  \fi%
  \global\let\svgwidth\undefined%
  \global\let\svgscale\undefined%
  \makeatother%
  \begin{picture}(1,0.200038)%
    \lineheight{1}%
    \setlength\tabcolsep{0pt}%
    \put(0,0){\includegraphics[width=\unitlength,page=1]{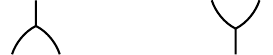}}%
    \put(0.22094468,0.03543583){\color[rgb]{0,0,0}\makebox(0,0)[lt]{\lineheight{1.25}\smash{\begin{tabular}[t]{l}$a$\end{tabular}}}}%
    \put(0,0){\includegraphics[width=\unitlength,page=2]{labelsvertices.pdf}}%
    \put(-0.00253403,0.03853651){\color[rgb]{0,0,0}\makebox(0,0)[lt]{\lineheight{1.25}\smash{\begin{tabular}[t]{l}$b$\end{tabular}}}}%
    \put(0.15589647,0.15097115){\color[rgb]{0,0,0}\makebox(0,0)[lt]{\lineheight{1.25}\smash{\begin{tabular}[t]{l}$a+b$\end{tabular}}}}%
    \put(0.89040834,0.03240381){\color[rgb]{0,0,0}\makebox(0,0)[lt]{\lineheight{1.25}\smash{\begin{tabular}[t]{l}$a+b$\end{tabular}}}}%
    \put(0.94498165,0.13461708){\color[rgb]{0,0,0}\makebox(0,0)[lt]{\lineheight{1.25}\smash{\begin{tabular}[t]{l}$a$\end{tabular}}}}%
    \put(0.72795451,0.12848422){\color[rgb]{0,0,0}\makebox(0,0)[lt]{\lineheight{1.25}\smash{\begin{tabular}[t]{l}$b$\end{tabular}}}}%
  \end{picture}%
\endgroup%
}}
    \end{equation*}   
    Sometimes it will be convenient to also allow edges labeled by $0$. A graph having some $0$-labeled edges is equivalent to the same graph with those edges removed.
\end{enumerate}
\end{dfn}

The labeling of the edges of a vinyl graph $\Gamma$ induces a labeling of the edges of $\Gamma_{\text{\tiny open}}$, where the labels on the top and bottom strands agree. We call the graph $\Gamma_{\text{\tiny open}}$ equipped with this labeling an \emph{open vinyl graph}. The \emph{level} of $\Gamma$ is the sum of the labels of all edges at a fixed height. The graphs in \cref{fig:carreequalsgammakl}, for instance, are both of level $k+\ell+1$. We consider vinyl graphs up to planar isotopy. 
\\\\
Let $R$ denote any of $\Q, \,\Z$ or $\mathbb{F}_p$, with $p$ prime.
\begin{dfn}
A \emph{decoration} of a (open or closed) vinyl graph $\Gamma$ of level $n$ is a choice of a homogeneous symmetric polynomial in $\ell(e)$ variables for each edge $e$ of $\Gamma$. In other words, it is a map 
\[
d\thinspace\colon E(\Gamma) \to R[x_1,\ldots,x_n], \qquad d(e) \in R[x_1,\ldots,x_{\ell(e)}]^{S_{\ell(e)}} \; \text{ homogeneous}. 
\]
We call a vinyl graph $\Gamma$ equipped with a decoration $d$ a \emph{decorated graph of shape $\Gamma$}, and denote it by $\Gamma^d$.  
\end{dfn}

The space generated by decorated graphs of shape $\Gamma$ is called $\mathscr{D}(\Gamma)$. 
It has the structure of a commutative $R$-algebra, which can be deduced from the following:
\[
\mathscr{D}(\Gamma) = \bigotimes_{e \text{ edge of } \Gamma} R[x_1,\ldots,x_{\ell(e)}]^{S_{\ell(e)}}.
\]

\begin{remark}[Notation] 

\begin{itemize}
    \item A decoration on a graph will be represented by a colored bullet ($\color{blue} \bullet$) labeled by a homogeneous symmetric polynomial on each edge. When the decoration on an edge $e$ is \emph{trivial}, i.e.\ $d(e)=1$, it will be omitted.
    \item For edges labeled $1$, the only possible decorations can be (multiples of) powers of $x_1$. On these edges, a decoration by $x_1^n$ will simply be denoted by a bullet labeled $n$ ($\color{blue} \bullet^n$). 
    \item A hollow bullet on an edge $e$ means that the decoration on $e$ might be trivial.
    \item Lastly, bullets on the same edge can be ``multiplied" in the following sense: having two bullets labeled $p$ and $p'$ on the same edge is equivalent to having one bullet labeled $p \cdot p'$.
\end{itemize}
 
\end{remark}

A special kind of homogeneous symmetric polynomials will be particularly relevant in this paper: \emph{Schur polynomials}, whose definition and main properties can be found in \cite{macdonald}. This family of homogeneous symmetric polynomials are an integral linear basis for the space of all symmetric polynomials. We will denote the set of Young diagrams with at most $a$ columns and at most $b$ rows by $T(a,b)$, and the Schur polynomial corresponding to a Young diagram $\lambda$ by $s_{\lambda}$.  
Other than on $\lambda$, the polynomial $s_{\lambda}$ also depends on the set $T(a,b)$ we consider $\lambda$ to be part of: when $\lambda$ is viewed as an element of $T(a,b)$, it is implicit that $s_{\lambda}$ is a symmetric polynomial in $a$ variables.

\begin{remark} \label{rem:schur}
For a Young diagram $\lambda$, let $|\lambda|$ be the number of boxes it consists of. 
\begin{enumerate}
    \item When $\lambda \in T(a,1)$, one has $s_{\lambda}=e_i(x_1,\ldots,x_a)$, where $i=|\lambda|$  and $e_i$ is the $i$\textit{-th elementary polynomial}:
    \[
    e_i(x_1,\ldots,x_a)=\sum_{1\leq j_1<\ldots<j_i\leq a} x_{j_1} \cdots x_{j_i} \qquad \text{ for } 0\leq i\leq a.
    \]
    \item When $\lambda \in T(1,b)$, one has $0 \leq |\lambda| \leq b$ and $s_{\lambda}=x_1^{|\lambda|}$. 
\end{enumerate}

In this paper, all relevant instances of Schur polynomials will be of one of these two kinds.
\end{remark}

Next, we would like to define a pairing of decorated graphs. We start by introducing the notion of coloring of a graph.
\begin{dfn} \label{def:coloring}
A \emph{coloring} of a (open or closed) vinyl graph $\Gamma$ of level $n$ is a map
$$c\thinspace\colon E(\Gamma) \to \mathcal{P}(\{1, \ldots ,n\}), \qquad c(e)=\{i_1, \ldots ,i_{\ell(e)}\} \textnormal{ with } i_j \in \{1, \ldots ,n\}$$ 
which satisfies the two conditions below:
\begin{enumerate}
    \item Denote the edges around a vertex $v$ by $e_a,\, e_b,\, e_{a+b}$, with labels $a,\, b$ and $a+b$ respectively. At each vertex, $c(e_{a+b})=c(e_a) \cup c(e_b)$. Since, in particular, this implies that $c(e_a) \cap c(e_b) = \emptyset$, we can rephrase the condition as $c(e_{a+b})=c(e_a) \sqcup c(e_b)$.
    \item Let $e_1,\ldots,e_k$ be the edges at a fixed height of $\Gamma$. Then $\displaystyle\bigcup_i c(e_i) = \{1, \ldots ,n\}$.
\end{enumerate}
\end{dfn}

\begin{dfn} \label{def:evaldecoratedgraphs}
Let $\Gamma$ be a (open or closed) vinyl graph of level $n$, and $\Gamma^{d}$ a decorated graph. Fix a coloring $c$ of $\Gamma$.
\begin{itemize}
    \item For each edge $e$ of $\Gamma$, $d(e)$ is a symmetric polynomial $p_e(x_1,\ldots,x_{\ell(e)})$. If $c(e)=\{i_1,\ldots ,i_{\ell(e)}\}$, with $i_j \in \{1,\ldots , n\}$, let $d_c(e)=p_e(x_{i_1},\ldots,x_{i_{\ell(e)}})$. We define 
    \[
    P(\Gamma^{d},c)=\prod_{e\in E(\Gamma)} d_c(e), \qquad Q(\Gamma,c)=\displaystyle \prod_{\substack{v \text{ split}\\ \text{vertex}}} \prod_{\substack{i \in c(e^l(v)),\\ j \in c(e^r(v))}} (x_i-x_j).
    \]
    \item Let 
    $$\langle \Gamma^d, c \rangle_\infty=\frac{P(\Gamma^{d},c)}{Q(\Gamma,c)}.$$
\end{itemize}

We define the \emph{$\infty$-evaluation $\langle \Gamma^d \rangle_\infty$ of the decorated graph} $\Gamma^d$ as 
\[
\langle \Gamma^d \rangle_\infty=\sum_{c \text{ coloring of } \Gamma} \langle \Gamma^d, c \rangle_\infty. 
\]
\end{dfn}

From its definition it is clear that $\langle \Gamma^d \rangle_\infty \in R(x_1,\ldots,x_n)$. However, more is true: $\langle \Gamma^d \rangle_\infty$ turns out to be a homogeneous symmetric polynomial in $R[x_1,\ldots,x_n]$, for all decorated graphs $\Gamma^d$.

We define the \emph{$1$-evaluation} $\langle \Gamma^d \rangle_1$ of $\Gamma^d$ as the specialisation of $\langle \Gamma^d \rangle_\infty$ at $(0,\ldots ,0)$.

\begin{dfn} \label{def:pairing}
Given a vinyl graph $\Gamma$ of level $n$, we define the $R$-module map $(\cdot \, , \cdot)_1$ as 
\[(\cdot \, , \cdot)_1 \colon \thinspace \mathscr{D}(\Gamma) \otimes_R \mathscr{D}(\Gamma) \to R, \qquad \qquad
(\Gamma^d , \Gamma^{d'})_1 = \langle \Gamma^{d \cdot d'} \rangle_1.
\]
\end{dfn}

\begin{dfn} \label{def:statespaces}
Given a (closed) vinyl graph $\Gamma$, we define $\mathscr{S}_1(\Gamma)$ to be the $R$-module
\[
\mathscr{S}_1(\Gamma)=\mathscr{D}(\Gamma)/\bigcap_{\Gamma^d} \ker(\cdot \, , \Gamma^d)_1 .
\]
\end{dfn}
A grading on $\mathscr{S}_1(\Gamma)$ is defined as follows:
\begin{dfn} \label{def:degdecoratedgraph}
Let $\Gamma$ be a vinyl graph and define $t_{\Gamma}=\displaystyle\sum_{\substack{v \textnormal{ split, with} \\ \textnormal{labels } a,b,a+b}} ab$. Let $\Gamma^d$ be a decorated graph.
\begin{enumerate}
    \item The \emph{degree of the decoration} $d$ is
    \[
    \deg(d)=\sum_{e \in E(\Gamma)} \deg(d(e)).
    \]
    \item The \emph{degree of the decorated graph} $\Gamma^{d}$ is defined as
    \[
    \deg(\Gamma^{d}) = 2 \cdot \deg(d) - t_{\Gamma}.
    \]
\end{enumerate}
\end{dfn}

Given a graded $R$-module $M=\displaystyle\bigoplus_{i \in \Z} M_i$, its \emph{graded rank} is $\rk_q(M)=\displaystyle\sum_{i \in \Z} q^i\rk_{R}(M)$. Moreover, we denote by $q^jM$ the graded $R$-module with $(q^jM)_i=M_{i-j}$. More generally, given a Laurent polynomial $P(q)=\displaystyle\sum_{i \in \Z} a_iq^i$, with $a_i \in \N$, we define
\[
P(q)M = \bigoplus_{i \in \Z} q^i(\bigoplus_{j=1}^{a_i} M).
\]

Let $[n]=\frac{q^n-q^{-n}}{q-q^{-1}}\in \Z[q,q^{-1}]^{S_2}$ and $\left[ \substack {n \\ k} \right] = \frac{[n][n-1]\cdots[n-k+1]}{[k][k-1]\cdots[1]}$ be the \emph{quantum integers} and \emph{quantum binomials} respectively.

\begin{dfn}
For all vinyl graphs $\Gamma$, we define\footnote{This definition encodes the \emph{symmetric MOY calculus} \cite{zbMATH07206869} in the case $\mathfrak{gl}_1$. In the foamy setting, $\mathscr{S}_1(\cdot)$ is really a functor categorifying this calculus.} 
\[
\llangle \Gamma \rrangle_1 = \prod_{\substack{v \textnormal{ split, with} \\ \textnormal{labels } a,b,a+b}} \left[ \substack {a + b \\ a} \right]
\]
\end{dfn}

Then the following holds.
\begin{prop}[\cite{zbMATH07206869}, Theorem 5.14] \label{prop:S1}
Given a vinyl graph $\Gamma$, $\mathscr{S}_1(\Gamma)$ is a finitely generated, free, graded $R$-module of graded rank $\llangle \Gamma \rrangle_1$.
\end{prop}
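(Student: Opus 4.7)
The plan is to prove the three conclusions --- finite generation, freeness, and identification of the graded rank --- simultaneously by induction on the total number of trivalent vertices of $\Gamma$. Since, as already noted, $\langle\Gamma^d\rangle_\infty$ lies in $R[x_1,\ldots,x_n]^{S_n}$, the pairing $(\cdot,\cdot)_1$ is a well-defined, symmetric, graded $R$-bilinear form on $\mathscr{D}(\Gamma)$, and by \cref{def:statespaces} the question reduces to controlling the rank of its Gram matrix modulo the null ideal.

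The base case is when $\Gamma$ has no vertices. Then $\llangle\Gamma\rrangle_1=1$ (empty product), and one verifies directly that $\mathscr{S}_1(\Gamma)$ is free of rank one generated by the trivial decoration: a straightforward computation from \cref{def:evaldecoratedgraphs} shows that $(1,1)_1\in R^\times$, while any nontrivial homogeneous decoration pairs to zero with the unit for degree reasons, forcing the quotient $\mathscr{D}(\Gamma)/\bigcap_{\Gamma^{d}}\ker(\cdot,\Gamma^d)_1$ to be generated by $1$.

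For the inductive step, the central tools are the local MOY-type identities satisfied by $\mathscr{S}_1$: the \emph{digon relation}, decomposing a split--merge pair with bottom labels $a,b$ as $\left[\substack{a+b\\a}\right]$ copies of a single edge labeled $a+b$, together with the \emph{associativity} for stacked split vertices and the \emph{square-switch}. Each relation has to be lifted from an identity at the level of graded evaluations (which follows combinatorially from the formula defining $\llangle\cdot\rrangle_1$) to a genuine splitting of state spaces, by exhibiting explicit decorated graphs that realize the direct-sum decomposition together with their duals under $(\cdot,\cdot)_1$. Iterating these moves reduces $\Gamma$ to a disjoint union of circles, picking up exactly one quantum binomial for each vertex removed, which matches $\llangle\Gamma\rrangle_1$ by construction.

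The main obstacle is upgrading each local identity from ranks to state spaces: for every spanning family produced inductively one must exhibit a dual family such that the Gram matrix of $(\cdot,\cdot)_1$ restricted to these two sets is unimodular over $R$. This reduces to computing $1$-evaluations of decorated digons and squares, where Schur polynomial identities of the type recorded in \cref{rem:schur} make the Gram matrix triangular with unit diagonal in a suitable ordering; freeness over $R=\Z$ and $R=\mathbb{F}_p$, rather than merely over $\Q$, then follows from this unimodularity. The grading statement is immediate from \cref{def:degdecoratedgraph}: the degrees of the decorations in the spanning set account for exactly the monomials appearing in the $q$-expansion of the quantum binomials $\left[\substack{a+b\\a}\right]$.
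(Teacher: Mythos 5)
First, a point of order: the paper does not prove this proposition. It is quoted from Robert--Wagner (\cite{zbMATH07206869}, Theorem 5.14), and \cref{sec:gl1eval} explicitly defers the proofs of all statements in that section to the references. So there is no internal proof to compare yours against; your strategy --- induction on the number of vertices via categorified MOY moves inside the universal construction --- is in spirit the route taken in the cited source, and I can only assess the argument on its own terms.

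There are two concrete gaps. (1) The base case is not right as stated: for a disjoint union of circles the $1$-evaluation of the trivially decorated graph is a multinomial coefficient, not a unit. For two $1$-labelled circles at level $2$ there are exactly two colorings, each contributing $1$, so $(1,1)_1=2$; over $\Q$ or $\Z$ nonvanishing still yields freeness of rank one, but over $\mathbb{F}_2$ the pairing on this graph is identically zero in degree $0=t_\Gamma$, and your claim that $(1,1)_1\in R^\times$ fails precisely in the positive-characteristic cases the proposition is asserted to cover. (2) More importantly, exhibiting a family together with a dual family whose Gram matrix under $(\cdot\,,\cdot)_1$ is unimodular only bounds the rank of $\mathscr{S}_1(\Gamma)$ from \emph{below}: it shows the family generates a free direct summand of the quotient, not that it spans it. The upper bound --- that every decorated graph is congruent, modulo $\bigcap_{\Gamma^d}\ker(\cdot\,,\Gamma^d)_1$, to a combination of the chosen generators --- requires local rewriting identities valid for \emph{arbitrary} decorations (dot migration as in \cref{lem:dotmigration}, reduction of decorations on digons, removal of circles), each verified directly against the evaluation of \cref{def:evaldecoratedgraphs}. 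This is where essentially all of the content of the theorem lives; your proposal correctly names it as ``the main obstacle'' but then substitutes the Gram-matrix computation for it, which does not close the gap. Until the spanning step is supplied, the induction establishes only that the graded rank is at least $\llangle\Gamma\rrangle_1$, not that it equals it.
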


\begin{remark} \label{rem:degreemaxdecoration}
Let $\Gamma^d$ be a decorated graph. Then, by the definition of the $\infty$-evaluation:
\begin{equation} \label{eq:degevalgraph}
\deg (d) - t_{\Gamma} = \deg(\langle \Gamma^d \rangle_{\infty} ).
\end{equation}
This is due to the fact that for each coloring $c$ of $\Gamma$, one has $\deg(P(\Gamma^d,c))=\deg(d)$ and $\deg(Q(\Gamma,c))=t_{\Gamma}$. As a consequence, since $\langle \Gamma^d \rangle_{\infty}$ is a homogeneous polynomial and the 1-evaluation is the specialisation of the $\infty$-evaluation at $(0,\ldots,0)$:
\begin{equation} \label{eq:degreeeval1}
\deg(d) \ne t_{\Gamma} \quad\Rightarrow\quad \langle \Gamma^d \rangle_1 = 0.
\end{equation}
Moreover, since $\mathscr{S}_1(\Gamma)$ is obtained by quotienting by the pairing $(\cdot \, ,\cdot)_1$, either the class of $\Gamma^d$ in $\mathscr{S}_1(\Gamma)$ is zero, or it satisfies: 
\begin{equation} \label{eq:degreemaxdecoration}
0 \leq \deg(d) \leq t_{\Gamma}.
\end{equation} 
\end{remark}

\bigskip

In the rest of the section, we will state the main properties and relations that hold in $\mathscr{S}_1(\Gamma)$, and define some morphisms that will be used throughout this paper. We will represent graphs only locally, in the tangles where they differ from one another.

\begin{lem}[\cite{zbMATH07206869}, Theorem 5.12]  \label{lem:monoidal}
Let $\Gamma,\, \Theta$ be vinyl graphs. Then
\[
\mathscr{S}_1(\Gamma \sqcup \Theta) \simeq \mathscr{S}_1(\Gamma) \otimes_R \mathscr{S}_1(\Theta).
\]
\end{lem}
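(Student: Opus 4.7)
The underlying algebraic identity is the tensor decomposition $\mathscr{D}(\Gamma \sqcup \Theta) = \mathscr{D}(\Gamma) \otimes_R \mathscr{D}(\Theta)$, which is immediate from the description of $\mathscr{D}$ as a tensor product over edges and the fact that $E(\Gamma \sqcup \Theta) = E(\Gamma) \sqcup E(\Theta)$. Under this identification, the plan is to produce a well-defined isomorphism
\[
\Phi\colon \mathscr{S}_1(\Gamma) \otimes_R \mathscr{S}_1(\Theta) \longrightarrow \mathscr{S}_1(\Gamma \sqcup \Theta), \qquad [\Gamma^{d}] \otimes [\Theta^{d'}] \longmapsto [(\Gamma \sqcup \Theta)^{d \cup d'}].
\]

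The first step is a classification of colorings of $\Gamma \sqcup \Theta$. Setting $n = n_\Gamma + n_\Theta$, condition (2) of \cref{def:coloring}, applied at a fixed height of each component and combined with condition (1), forces the color set $\{1,\ldots,n\}$ to split as $A \sqcup A^c$ with $|A| = n_\Gamma$, where $A$ is carried throughout by the edges of $\Gamma$ and $A^c$ by those of $\Theta$. Colorings of $\Gamma \sqcup \Theta$ are thus in bijection with triples $(A, c_\Gamma, c_\Theta)$, where $c_\Gamma$ (resp.\ $c_\Theta$) is a coloring of $\Gamma$ (resp.\ $\Theta$) using the indices from $A$ (resp.\ $A^c$). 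The second step is to factor the evaluation: since no vertex of $\Gamma \sqcup \Theta$ mixes the two components, both $P((\Gamma \sqcup \Theta)^{d \cup d'}, c)$ and $Q(\Gamma \sqcup \Theta, c)$ split as products of contributions from each component, yielding
\[
\langle (\Gamma \sqcup \Theta)^{d \cup d'} \rangle_{\infty}(x_1, \ldots, x_n) = \sum_{\substack{A \subset \{1,\ldots,n\} \\ |A| = n_\Gamma}} \langle \Gamma^{d} \rangle_{\infty}(x_A) \cdot \langle \Theta^{d'} \rangle_{\infty}(x_{A^c}).
\]
Specializing at zero, this expresses the pairing on $\Gamma \sqcup \Theta$ in terms of the component pairings, and in particular shows that $\Phi$ is well-defined: if $\Gamma^{d}$ lies in the kernel of all pairings on $\mathscr{D}(\Gamma)$, then $[(\Gamma \sqcup \Theta)^{d \cup d'}]$ vanishes for every $d'$.

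The third step is a rank comparison. By \cref{prop:S1}, both $\mathscr{S}_1(\Gamma \sqcup \Theta)$ and $\mathscr{S}_1(\Gamma) \otimes_R \mathscr{S}_1(\Theta)$ are free graded $R$-modules, of respective graded ranks $\llangle \Gamma \sqcup \Theta \rrangle_1$ and $\llangle \Gamma \rrangle_1 \cdot \llangle \Theta \rrangle_1$. These coincide because the split vertices of $\Gamma \sqcup \Theta$ are precisely those of $\Gamma$ together with those of $\Theta$, so the invariant $\llangle \cdot \rrangle_1$ is manifestly multiplicative under disjoint union. Combined with the surjectivity of $\Phi$ (which follows from the tensor decomposition of $\mathscr{D}$ since every decorated graph of shape $\Gamma \sqcup \Theta$ is of the form $(\Gamma \sqcup \Theta)^{d \cup d'}$), this forces $\Phi$ to be an isomorphism.

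\textbf{Main obstacle.} The delicate step is showing that $\Phi$ is well-defined when $R = \mathbb{F}_p$: the specialization of the factorization above produces a combinatorial factor $\binom{n}{n_\Gamma}$ which can vanish mod $p$, obstructing the naive argument that the pairing on $\Gamma \sqcup \Theta$ decomposes multiplicatively. To bypass this, one should work with the $\infty$-evaluation \emph{before} specializing, exploiting the degree relation \eqref{eq:degevalgraph} and \eqref{eq:degreeeval1} from \cref{rem:degreemaxdecoration}: a decoration contributes to a nonzero $1$-pairing only when the components' decorations separately achieve the degree-$t_\Gamma$ (resp.\ $t_\Theta$) threshold, and in this bidegree-homogeneous regime the kernel of the $\Gamma \sqcup \Theta$ pairing matches the expected ideal $\ker^{\Gamma} \otimes \mathscr{D}(\Theta) + \mathscr{D}(\Gamma) \otimes \ker^{\Theta}$ regardless of characteristic.
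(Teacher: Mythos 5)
The paper does not actually prove this lemma: it is imported verbatim from Robert--Wagner (\cite{zbMATH07206869}, Theorem 5.12), so there is no in-paper argument to compare against and I am judging your proposal on its own terms. Your architecture is the natural one and most of it is correct. The decomposition of colorings of $\Gamma\sqcup\Theta$ into triples $(A,c_\Gamma,c_\Theta)$ is right, and so is the factorization of $P$ and $Q$; if you push the computation one step further using the homogeneity statement \cref{eq:degevalgraph}, you get the exact relation
\[
\bigl((\Gamma\sqcup\Theta)^{d\cup d'},(\Gamma\sqcup\Theta)^{e\cup e'}\bigr)_1 \;=\; \binom{n_\Gamma+n_\Theta}{n_\Gamma}\,(\Gamma^{d},\Gamma^{e})_1\,(\Theta^{d'},\Theta^{e'})_1
\]
in \emph{every} degree (when one side of the degree balance fails, the corresponding $\infty$-evaluation is the zero polynomial and both sides vanish). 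Over $\Q$ and $\Z$ this immediately gives that the radical of the pairing on $\mathscr{D}(\Gamma)\otimes_R\mathscr{D}(\Theta)$ equals $\ker^\Gamma\otimes\mathscr{D}(\Theta)+\mathscr{D}(\Gamma)\otimes\ker^\Theta$ (multiplying a pairing by a nonzero integer does not change its radical over a domain, and the quotients are free by \cref{prop:S1}), and your surjectivity-plus-graded-rank argument then closes the proof. So for $R=\Q,\Z$ your proof is essentially complete.

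The gap is in your ``main obstacle'' paragraph: the proposed fix for $R=\mathbb{F}_p$ does not work, and cannot, because the binomial factor is not an artifact of specializing too early --- it is genuinely present in the $1$-pairing. Working with the $\infty$-evaluation ``before specializing'' changes nothing: in the bidegree where $\deg d=t_\Gamma$ and $\deg d'=t_\Theta$, both $\langle\Gamma^d\rangle_\infty$ and $\langle\Theta^{d'}\rangle_\infty$ are already constants, so the sum over the $\binom{n_\Gamma+n_\Theta}{n_\Gamma}$ choices of $A$ produces that binomial on the nose. Concretely, take $\Gamma=\Theta$ a single $1$-labeled circle and $R=\mathbb{F}_2$: the trivially decorated two-circle graph has exactly two colorings, each contributing $1$, so $\langle\,\cdot\,\rangle_1=2=0$, the pairing on $\mathscr{D}(\Gamma\sqcup\Theta)$ vanishes identically, and the universal construction over $\mathbb{F}_2$ returns the zero module --- so the statement as you are trying to prove it is simply false for the na\"{\i}ve $\mathbb{F}_p$-linear universal construction whenever $p$ divides the binomial. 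The consistent reading (and the one implicit in this paper, which deduces the $\mathbb{F}_p$ statements ``immediately'' from the $\Z$ case) is that $\mathscr{S}_1(\cdot)$ over $\mathbb{F}_p$ means the base change of the free $\Z$-form; monoidality over $\mathbb{F}_p$ is then a corollary of the $\Z$ case and freeness, not something to be re-proved from the pairing. You should replace the last paragraph by this reduction rather than trying to salvage the kernel computation in positive characteristic.
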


\begin{lem}[\cite{zbMATH07206869}]  \label{lem:dotmigration}
Inside $\mathscr{S}_1(\Gamma)$ the following (local) equality of decorated graphs holds, where $c^{\lambda}_{\mu\nu}$ are the Littlewood--Richardson coefficients (see \cite{macdonald}):
\begin{equation} \label{eq:dotmigration}
\raisebox{-1.2\baselineskip}{\def\svgwidth{150pt}
\begingroup%
  \makeatletter%
  \providecommand\color[2][]{%
    \errmessage{(Inkscape) Color is used for the text in Inkscape, but the package 'color.sty' is not loaded}%
    \renewcommand\color[2][]{}%
  }%
  \providecommand\transparent[1]{%
    \errmessage{(Inkscape) Transparency is used (non-zero) for the text in Inkscape, but the package 'transparent.sty' is not loaded}%
    \renewcommand\transparent[1]{}%
  }%
  \providecommand\rotatebox[2]{#2}%
  \newcommand*\fsize{\dimexpr\f@size pt\relax}%
  \newcommand*\lineheight[1]{\fontsize{\fsize}{#1\fsize}\selectfont}%
  \ifx\svgwidth\undefined%
    \setlength{\unitlength}{157.81683597bp}%
    \ifx\svgscale\undefined%
      \relax%
    \else%
      \setlength{\unitlength}{\unitlength * \real{\svgscale}}%
    \fi%
  \else%
    \setlength{\unitlength}{\svgwidth}%
  \fi%
  \global\let\svgwidth\undefined%
  \global\let\svgscale\undefined%
  \makeatother%
  \begin{picture}(1,0.26510798)%
    \lineheight{1}%
    \setlength\tabcolsep{0pt}%
    \put(0.01337543,0.24585109){\makebox(0,0)[lt]{\lineheight{1.25}\smash{\begin{tabular}[t]{l}\tiny $a$\end{tabular}}}}%
    \put(0,0){\includegraphics[width=\unitlength,page=1]{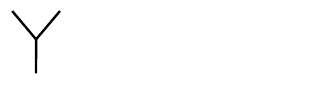}}%
    \put(0.1805483,0.2443541){\makebox(0,0)[lt]{\lineheight{1.25}\smash{\begin{tabular}[t]{l}\tiny $b$\end{tabular}}}}%
    \put(0.06482912,0.00703442){\makebox(0,0)[lt]{\lineheight{1.25}\smash{\begin{tabular}[t]{l}\tiny $a+b$\end{tabular}}}}%
    \put(0,0){\includegraphics[width=\unitlength,page=2]{dotmigration.pdf}}%
    \put(0.72195132,0.2440888){\makebox(0,0)[lt]{\lineheight{1.25}\smash{\begin{tabular}[t]{l}\tiny $a$\end{tabular}}}}%
    \put(0,0){\includegraphics[width=\unitlength,page=3]{dotmigration.pdf}}%
    \put(0.88912433,0.24259181){\makebox(0,0)[lt]{\lineheight{1.25}\smash{\begin{tabular}[t]{l}\tiny $b$\end{tabular}}}}%
    \put(0.77340515,0.00527213){\makebox(0,0)[lt]{\lineheight{1.25}\smash{\begin{tabular}[t]{l}\tiny $a+b$\end{tabular}}}}%
    \put(0,0){\includegraphics[width=\unitlength,page=4]{dotmigration.pdf}}%
    \put(0.13138394,0.0983362){\color[rgb]{0,0.53333333,0.63137255}\makebox(0,0)[lt]{\lineheight{1.25}\smash{\begin{tabular}[t]{l}$s_{\lambda}$\end{tabular}}}}%
    \put(0.72478003,0.14803761){\color[rgb]{0,0.33333333,0.83137255}\makebox(0,0)[lt]{\lineheight{1.25}\smash{\begin{tabular}[t]{l}$s_{\mu}$\end{tabular}}}}%
    \put(0.85051779,0.14330548){\color[rgb]{0,0.33333333,0.83137255}\makebox(0,0)[lt]{\lineheight{1.25}\smash{\begin{tabular}[t]{l}$s_{\nu}$\end{tabular}}}}%
    \put(0.24630543,0.12132047){\makebox(0,0)[lt]{\lineheight{1.25}\smash{\begin{tabular}[t]{l}$=\displaystyle\sum_{|\mu|+|\nu|=|\lambda|}c_{\mu\nu}^\lambda$\end{tabular}}}}%
    \put(0,0){\includegraphics[width=\unitlength,page=5]{dotmigration.pdf}}%
  \end{picture}%
\endgroup%
}
\end{equation}
The equality still holds for merge vertices (i.e.\ rotating both tangles by $\pi$).
We call this identity \emph{dot migration}, as it allows us to migrate decorations past vertices.
\end{lem}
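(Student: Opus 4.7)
My plan is to verify the identity directly in terms of the defining pairing $(\cdot,\cdot)_1$ on $\mathscr{D}(\Gamma)$, using the classical decomposition of Schur polynomials under splitting of the alphabet. Since $\mathscr{S}_1(\Gamma) = \mathscr{D}(\Gamma)/\bigcap_{\Gamma^{d''}} \ker(\cdot, \Gamma^{d''})_1$, it suffices to show that the difference $\Gamma^{d_L} - \Gamma^{d_R}$ of the two sides of \eqref{eq:dotmigration} pairs trivially with every $\Gamma^{d''}$. In fact I will prove the stronger statement that $\langle \Gamma^{d_L} \rangle_\infty = \langle \Gamma^{d_R} \rangle_\infty$; multiplying the decoration on the remaining edges by any $d''$ then yields the desired vanishing after specialising at $(0,\ldots,0)$.

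Both decorated graphs have the same underlying vinyl graph $\Gamma$ and agree on all edges outside the local picture, so they admit the same set of colorings $c$ and the same factor $Q(\Gamma,c)$. It therefore suffices to fix a coloring $c$ and compare the contribution of the three edges $e_a, e_b, e_{a+b}$ to $P(\Gamma^{d},c)$. Writing $I := c(e_a)$ and $J := c(e_b)$, so that $c(e_{a+b}) = I \sqcup J$, the left-hand side contributes the single factor $s_\lambda(x_{I \sqcup J})$, while the right-hand side contributes $\sum_{|\mu|+|\nu|=|\lambda|} c_{\mu\nu}^{\lambda}\, s_\mu(x_I)\, s_\nu(x_J)$.

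The identification of these two expressions is then exactly the coproduct (or splitting) formula for Schur polynomials,
\[
s_\lambda(x_1,\ldots,x_{a+b}) \;=\; \sum_{|\mu|+|\nu|=|\lambda|} c_{\mu\nu}^{\lambda}\, s_\mu(x_1,\ldots,x_a)\, s_\nu(x_{a+1},\ldots,x_{a+b}),
\]
which is a standard consequence of the Littlewood--Richardson rule (see \cite{macdonald}). Plugging this in shows $P(\Gamma^{d_L},c) = P(\Gamma^{d_R},c)$ for every coloring $c$, hence $\langle \Gamma^{d_L},c\rangle_\infty = \langle \Gamma^{d_R},c\rangle_\infty$, and summing over $c$ gives equality of the $\infty$-evaluations. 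The same argument applied after multiplying by any further decoration $d''$ on the complement of the local tangle yields $(\Gamma^{d_L}, \Gamma^{d''})_1 = (\Gamma^{d_R}, \Gamma^{d''})_1$, proving equality in $\mathscr{S}_1(\Gamma)$.

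The merge-vertex case is entirely analogous: the coloring constraint $c(e_{a+b}) = c(e_a) \sqcup c(e_b)$ is imposed at merge vertices as well, and merge vertices do not appear in the product defining $Q(\Gamma,c)$, so the same Schur coproduct identity applies verbatim. No substantive obstacle arises in this argument; the only non-formal ingredient is the classical coproduct formula for $s_\lambda$, and the main care needed is bookkeeping to ensure that all factors outside the local picture remain untouched so that the reduction to a local, coloring-wise comparison is legitimate.
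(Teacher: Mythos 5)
Your proof is correct. The paper itself does not prove this lemma --- it is quoted from Robert--Wagner \cite{zbMATH07206869} and the reader is referred there --- so there is no in-text argument to compare against; but your verification is exactly the natural one in the foam-free setting of \cref{sec:gl1eval}: reduce equality in $\mathscr{S}_1(\Gamma)$ to vanishing of all pairings, observe that both sides admit the same colorings and the same factor $Q(\Gamma,c)$, and identify the local contributions to $P(\cdot,c)$ via the alphabet-splitting formula $s_\lambda(x_{I\sqcup J})=\sum_{\mu,\nu}c^{\lambda}_{\mu\nu}\,s_\mu(x_I)s_\nu(x_J)$. The remark that merge vertices do not enter $Q(\Gamma,c)$ correctly disposes of the rotated case.

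One small point of bookkeeping: a test decoration $d''$ may be nontrivial on the three local edges $e_a,e_b,e_{a+b}$ themselves, not only on ``the remaining edges'' as you phrase it. This costs nothing --- under a fixed coloring $c$ those extra decorations contribute the common factor $d''(e_{a+b})(x_{I\sqcup J})\,d''(e_a)(x_I)\,d''(e_b)(x_J)$ to both sides, so the coloring-wise identity you prove still gives $(\Gamma^{d_L},\Gamma^{d''})_1=(\Gamma^{d_R},\Gamma^{d''})_1$ for \emph{all} $d''$ --- but it is worth stating explicitly, since the conclusion genuinely requires pairing against arbitrary decorations, not just those supported away from the tangle.
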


In addition to the represented ones, other decorations $p_a,\, p_b$ and $p_{a+b}$ can be present on the edges of the decorated graphs of \cref{eq:dotmigration}. These simply get multiplied with the decorations $s_{\mu},\, s_{\nu}$ and $s_{\lambda}$ that are drawn.

Given two vinyl graphs $\Gamma$ and $\Theta$, let us discuss some ``elementary" $R$-module morphisms between $\mathscr{S}_1(\Gamma)$ and $\mathscr{S}_1(\Theta)$, that will come up often in this paper. Most of the maps we will encounter later on will be the composition of elementary ones, and all intermediate vinyl graphs will create a ``movie" (see \cref{eq:moviephi0alpha} for an example of a movie). Here is a description\footnote{The fact that these maps are indeed well-defined is not trivial, but can be checked easily. For more details see \cite{zbMATH07632766}.} of some of these elementary morphisms. They are the identity on the portion of the graphs that is outside the represented tangles. For all $i\in\N$, let $p_i$ be a homogeneous symmetric polynomial in $i$ variables.

\begin{itemize} 
    \item The map \emph{zip}: $\mathscr{S}_1 (\;\;\raisebox{-0.65\baselineskip}{\def\svgwidth{25pt}
\begingroup%
  \makeatletter%
  \providecommand\color[2][]{%
    \errmessage{(Inkscape) Color is used for the text in Inkscape, but the package 'color.sty' is not loaded}%
    \renewcommand\color[2][]{}%
  }%
  \providecommand\transparent[1]{%
    \errmessage{(Inkscape) Transparency is used (non-zero) for the text in Inkscape, but the package 'transparent.sty' is not loaded}%
    \renewcommand\transparent[1]{}%
  }%
  \providecommand\rotatebox[2]{#2}%
  \newcommand*\fsize{\dimexpr\f@size pt\relax}%
  \newcommand*\lineheight[1]{\fontsize{\fsize}{#1\fsize}\selectfont}%
  \ifx\svgwidth\undefined%
    \setlength{\unitlength}{88.88106094bp}%
    \ifx\svgscale\undefined%
      \relax%
    \else%
      \setlength{\unitlength}{\unitlength * \real{\svgscale}}%
    \fi%
  \else%
    \setlength{\unitlength}{\svgwidth}%
  \fi%
  \global\let\svgwidth\undefined%
  \global\let\svgscale\undefined%
  \makeatother%
  \begin{picture}(1,0.88151274)%
    \lineheight{1}%
    \setlength\tabcolsep{0pt}%
    \put(0,0){\includegraphics[width=\unitlength,page=1]{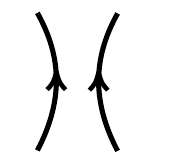}}%
    \put(0.6082359,0.44849416){\makebox(0,0)[lt]{\lineheight{1.25}\smash{\begin{tabular}[t]{l}\tiny $a$\end{tabular}}}}%
    \put(0.05875792,0.43643931){\makebox(0,0)[lt]{\lineheight{1.25}\smash{\begin{tabular}[t]{l}\tiny $b$\end{tabular}}}}%
    \put(0,0){\includegraphics[width=\unitlength,page=2]{twostrands.pdf}}%
  \end{picture}%
\endgroup%
}) \longrightarrow \mathscr{S}_1 (\;\;\raisebox{-0.65\baselineskip}{\def\svgwidth{26pt}
\begingroup%
  \makeatletter%
  \providecommand\color[2][]{%
    \errmessage{(Inkscape) Color is used for the text in Inkscape, but the package 'color.sty' is not loaded}%
    \renewcommand\color[2][]{}%
  }%
  \providecommand\transparent[1]{%
    \errmessage{(Inkscape) Transparency is used (non-zero) for the text in Inkscape, but the package 'transparent.sty' is not loaded}%
    \renewcommand\transparent[1]{}%
  }%
  \providecommand\rotatebox[2]{#2}%
  \newcommand*\fsize{\dimexpr\f@size pt\relax}%
  \newcommand*\lineheight[1]{\fontsize{\fsize}{#1\fsize}\selectfont}%
  \ifx\svgwidth\undefined%
    \setlength{\unitlength}{91.57316269bp}%
    \ifx\svgscale\undefined%
      \relax%
    \else%
      \setlength{\unitlength}{\unitlength * \real{\svgscale}}%
    \fi%
  \else%
    \setlength{\unitlength}{\svgwidth}%
  \fi%
  \global\let\svgwidth\undefined%
  \global\let\svgscale\undefined%
  \makeatother%
  \begin{picture}(1,0.8166754)%
    \lineheight{1}%
    \setlength\tabcolsep{0pt}%
    \put(0,0){\includegraphics[width=\unitlength,page=1]{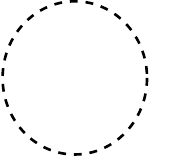}}%
    \put(0.48769331,0.37608225){\makebox(0,0)[lt]{\lineheight{1.25}\smash{\begin{tabular}[t]{l}\tiny $a+b$\end{tabular}}}}%
    \put(0.61631075,0.61301214){\makebox(0,0)[lt]{\lineheight{1.25}\smash{\begin{tabular}[t]{l}\tiny $a$\end{tabular}}}}%
    \put(0,0.61593708){\makebox(0,0)[lt]{\lineheight{1.25}\smash{\begin{tabular}[t]{l}\tiny $b$\end{tabular}}}}%
    \put(0,0){\includegraphics[width=\unitlength,page=2]{dumble.pdf}}%
    \put(0.61975313,0.14036107){\makebox(0,0)[lt]{\lineheight{1.25}\smash{\begin{tabular}[t]{l}\tiny $a$\end{tabular}}}}%
    \put(0.00256478,0.14182389){\makebox(0,0)[lt]{\lineheight{1.25}\smash{\begin{tabular}[t]{l}\tiny $b$\end{tabular}}}}%
    \put(0,0){\includegraphics[width=\unitlength,page=3]{dumble.pdf}}%
  \end{picture}%
\endgroup%
})$ is defined as
    \begin{equation} \label{eq:zip}
    {\small \raisebox{-1.1\baselineskip}{\def\svgwidth{230pt}
\begingroup%
  \makeatletter%
  \providecommand\color[2][]{%
    \errmessage{(Inkscape) Color is used for the text in Inkscape, but the package 'color.sty' is not loaded}%
    \renewcommand\color[2][]{}%
  }%
  \providecommand\transparent[1]{%
    \errmessage{(Inkscape) Transparency is used (non-zero) for the text in Inkscape, but the package 'transparent.sty' is not loaded}%
    \renewcommand\transparent[1]{}%
  }%
  \providecommand\rotatebox[2]{#2}%
  \newcommand*\fsize{\dimexpr\f@size pt\relax}%
  \newcommand*\lineheight[1]{\fontsize{\fsize}{#1\fsize}\selectfont}%
  \ifx\svgwidth\undefined%
    \setlength{\unitlength}{348.15769295bp}%
    \ifx\svgscale\undefined%
      \relax%
    \else%
      \setlength{\unitlength}{\unitlength * \real{\svgscale}}%
    \fi%
  \else%
    \setlength{\unitlength}{\svgwidth}%
  \fi%
  \global\let\svgwidth\undefined%
  \global\let\svgscale\undefined%
  \makeatother%
  \begin{picture}(1,0.15171479)%
    \lineheight{1}%
    \setlength\tabcolsep{0pt}%
    \put(0,0){\includegraphics[width=\unitlength,page=1]{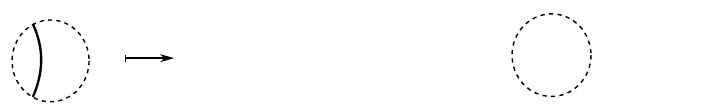}}%
    \put(0.17621824,0.09279196){\color[rgb]{0,0,0}\makebox(0,0)[lt]{\lineheight{1.25}\smash{\begin{tabular}[t]{l}$\textit{zip}$\end{tabular}}}}%
    \put(0.28805418,0.06983923){\color[rgb]{0,0,0}\makebox(0,0)[lt]{\lineheight{1.25}\smash{\begin{tabular}[t]{l}$\displaystyle\sum_{\lambda \in T(a,b)}(-1)^{|\widehat{\lambda}|}$\end{tabular}}}}%
    \put(0.79047917,0.10312331){\color[rgb]{0,0.33333333,0.83137255}\makebox(0,0)[lt]{\lineheight{1.25}\smash{\begin{tabular}[t]{l}$s_{\lambda}\cdot p_a$\end{tabular}}}}%
    \put(0.59443964,0.03651762){\color[rgb]{0,0.33333333,0.83137255}\makebox(0,0)[lt]{\lineheight{1.25}\smash{\begin{tabular}[t]{l}$s_{\widehat{\lambda}}\cdot p_b$\end{tabular}}}}%
    \put(0,0){\includegraphics[width=\unitlength,page=2]{zip.pdf}}%
    \put(0.09590229,0.12299614){\makebox(0,0)[lt]{\lineheight{1.25}\smash{\begin{tabular}[t]{l}\tiny $a$\end{tabular}}}}%
    \put(0.02732723,0.12162037){\makebox(0,0)[lt]{\lineheight{1.25}\smash{\begin{tabular}[t]{l}\tiny $b$\end{tabular}}}}%
    \put(0,0){\includegraphics[width=\unitlength,page=3]{zip.pdf}}%
    \put(0.71797594,0.13117923){\makebox(0,0)[lt]{\lineheight{1.25}\smash{\begin{tabular}[t]{l}\tiny $b$\end{tabular}}}}%
    \put(0.7203664,0.00477964){\makebox(0,0)[lt]{\lineheight{1.25}\smash{\begin{tabular}[t]{l}\tiny $b$\end{tabular}}}}%
    \put(0.78879418,0.13425677){\makebox(0,0)[lt]{\lineheight{1.25}\smash{\begin{tabular}[t]{l}\tiny $a$\end{tabular}}}}%
    \put(0.78851421,0.00541696){\makebox(0,0)[lt]{\lineheight{1.25}\smash{\begin{tabular}[t]{l}\tiny $a$\end{tabular}}}}%
    \put(0.77016556,0.07040015){\makebox(0,0)[lt]{\lineheight{1.25}\smash{\begin{tabular}[t]{l}\tiny $a+b$\end{tabular}}}}%
    \put(0,0){\includegraphics[width=\unitlength,page=4]{zip.pdf}}%
    \put(0.10160423,0.03825641){\color[rgb]{0,0.53333333,0.63137255}\makebox(0,0)[lt]{\lineheight{1.25}\smash{\begin{tabular}[t]{l}$p_a$\end{tabular}}}}%
    \put(-0.00143052,0.0524009){\color[rgb]{0,0.53333333,0.63137255}\makebox(0,0)[lt]{\lineheight{1.25}\smash{\begin{tabular}[t]{l}$p_b$\end{tabular}}}}%
    \put(0,0){\includegraphics[width=\unitlength,page=5]{zip.pdf}}%
  \end{picture}%
\endgroup%
}}
    \end{equation}
    where $\widehat{\lambda}$ is the Young diagram dual to $\lambda$.

    \item The map \emph{unzip}: $\mathscr{S}_1 (\;\;\raisebox{-0.65\baselineskip}{\def\svgwidth{26pt}}) \longrightarrow \mathscr{S}_1 (\;\;\raisebox{-0.65\baselineskip}{\def\svgwidth{25pt}})$ only changes the shape of the underlying graph, but preserves the decorations of the decorated graphs:
    \begin{equation} \label{eq:unzip}
    {\small \raisebox{-1.3\baselineskip}{\def\svgwidth{154pt}
\begingroup%
  \makeatletter%
  \providecommand\color[2][]{%
    \errmessage{(Inkscape) Color is used for the text in Inkscape, but the package 'color.sty' is not loaded}%
    \renewcommand\color[2][]{}%
  }%
  \providecommand\transparent[1]{%
    \errmessage{(Inkscape) Transparency is used (non-zero) for the text in Inkscape, but the package 'transparent.sty' is not loaded}%
    \renewcommand\transparent[1]{}%
  }%
  \providecommand\rotatebox[2]{#2}%
  \newcommand*\fsize{\dimexpr\f@size pt\relax}%
  \newcommand*\lineheight[1]{\fontsize{\fsize}{#1\fsize}\selectfont}%
  \ifx\svgwidth\undefined%
    \setlength{\unitlength}{223.65727051bp}%
    \ifx\svgscale\undefined%
      \relax%
    \else%
      \setlength{\unitlength}{\unitlength * \real{\svgscale}}%
    \fi%
  \else%
    \setlength{\unitlength}{\svgwidth}%
  \fi%
  \global\let\svgwidth\undefined%
  \global\let\svgscale\undefined%
  \makeatother%
  \begin{picture}(1,0.23517582)%
    \lineheight{1}%
    \setlength\tabcolsep{0pt}%
    \put(0,0){\includegraphics[width=\unitlength,page=1]{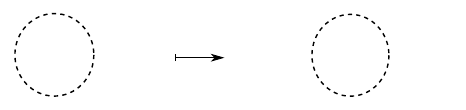}}%
    \put(0.35013818,0.14440581){\color[rgb]{0,0,0}\makebox(0,0)[lt]{\lineheight{1.25}\smash{\begin{tabular}[t]{l}$\textit{unzip}$\end{tabular}}}}%
    \put(0.1654664,0.15953555){\color[rgb]{0,0.53333333,0.63137255}\makebox(0,0)[lt]{\lineheight{1.25}\smash{\begin{tabular}[t]{l}$p_a$\end{tabular}}}}%
    \put(-0.00061144,0.15312987){\color[rgb]{0,0.53333333,0.63137255}\makebox(0,0)[lt]{\lineheight{1.25}\smash{\begin{tabular}[t]{l}$p_b$\end{tabular}}}}%
    \put(0,0){\includegraphics[width=\unitlength,page=2]{unzip.pdf}}%
    \put(0.05260391,0.203209){\makebox(0,0)[lt]{\lineheight{1.25}\smash{\begin{tabular}[t]{l}\tiny $b$\end{tabular}}}}%
    \put(0.04961844,0.00814188){\makebox(0,0)[lt]{\lineheight{1.25}\smash{\begin{tabular}[t]{l}\tiny $b$\end{tabular}}}}%
    \put(0.16284363,0.20799967){\makebox(0,0)[lt]{\lineheight{1.25}\smash{\begin{tabular}[t]{l}\tiny $a$\end{tabular}}}}%
    \put(0.16240773,0.00744026){\makebox(0,0)[lt]{\lineheight{1.25}\smash{\begin{tabular}[t]{l}\tiny $a$\end{tabular}}}}%
    \put(0.13384526,0.1085968){\makebox(0,0)[lt]{\lineheight{1.25}\smash{\begin{tabular}[t]{l}\tiny $a+b$\end{tabular}}}}%
    \put(0,0){\includegraphics[width=\unitlength,page=3]{unzip.pdf}}%
    \put(0.79605301,0.20593039){\makebox(0,0)[lt]{\lineheight{1.25}\smash{\begin{tabular}[t]{l}\tiny $a$\end{tabular}}}}%
    \put(0.68739947,0.19994011){\makebox(0,0)[lt]{\lineheight{1.25}\smash{\begin{tabular}[t]{l}\tiny $b$\end{tabular}}}}%
    \put(0,0){\includegraphics[width=\unitlength,page=4]{unzip.pdf}}%
    \put(0.16954588,0.05427216){\color[rgb]{0,0.53333333,0.63137255}\makebox(0,0)[lt]{\lineheight{1.25}\smash{\begin{tabular}[t]{l}$q_a$\end{tabular}}}}%
    \put(-0.00222683,0.04665102){\color[rgb]{0,0.53333333,0.63137255}\makebox(0,0)[lt]{\lineheight{1.25}\smash{\begin{tabular}[t]{l}$q_b$\end{tabular}}}}%
    \put(0.56163086,0.05617745){\color[rgb]{0,0.53333333,0.63137255}\makebox(0,0)[lt]{\lineheight{1.25}\smash{\begin{tabular}[t]{l}$p_b\cdot q_b$\end{tabular}}}}%
    \put(0.79805267,0.08570977){\color[rgb]{0,0.53333333,0.63137255}\makebox(0,0)[lt]{\lineheight{1.25}\smash{\begin{tabular}[t]{l}$p_a\cdot q_a$\end{tabular}}}}%
  \end{picture}%
\endgroup%
}}
    \end{equation}
    By \cref{lem:dotmigration}, if a decorated graph in the domain has a non-trivial decoration on the edge labeled $a+b$, this decoration can be pushed on two of the other four edges, so it is enough to define the \emph{unzip} map on decorated graphs with trivial decoration on that edge.

    \item For $0 \leq \alpha \leq k$, the map $\emph{bubble}_{\alpha}$: $\mathscr{S}_1 (\;\;\raisebox{-0.65\baselineskip}{\def\svgwidth{24pt}
\begingroup%
  \makeatletter%
  \providecommand\color[2][]{%
    \errmessage{(Inkscape) Color is used for the text in Inkscape, but the package 'color.sty' is not loaded}%
    \renewcommand\color[2][]{}%
  }%
  \providecommand\transparent[1]{%
    \errmessage{(Inkscape) Transparency is used (non-zero) for the text in Inkscape, but the package 'transparent.sty' is not loaded}%
    \renewcommand\transparent[1]{}%
  }%
  \providecommand\rotatebox[2]{#2}%
  \newcommand*\fsize{\dimexpr\f@size pt\relax}%
  \newcommand*\lineheight[1]{\fontsize{\fsize}{#1\fsize}\selectfont}%
  \ifx\svgwidth\undefined%
    \setlength{\unitlength}{105.04305612bp}%
    \ifx\svgscale\undefined%
      \relax%
    \else%
      \setlength{\unitlength}{\unitlength * \real{\svgscale}}%
    \fi%
  \else%
    \setlength{\unitlength}{\svgwidth}%
  \fi%
  \global\let\svgwidth\undefined%
  \global\let\svgscale\undefined%
  \makeatother%
  \begin{picture}(1,0.91738924)%
    \lineheight{1}%
    \setlength\tabcolsep{0pt}%
    \put(0.55881063,0.62745381){\makebox(0,0)[lt]{\lineheight{1.25}\smash{\begin{tabular}[t]{l}\tiny $k+1$\end{tabular}}}}%
    \put(0,0){\includegraphics[width=\unitlength,page=1]{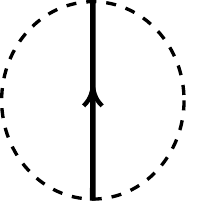}}%
  \end{picture}%
\endgroup%
}\;) \longrightarrow \mathscr{S}_1 (\;\;\raisebox{-0.65\baselineskip}{\def\svgwidth{26pt}
\begingroup%
  \makeatletter%
  \providecommand\color[2][]{%
    \errmessage{(Inkscape) Color is used for the text in Inkscape, but the package 'color.sty' is not loaded}%
    \renewcommand\color[2][]{}%
  }%
  \providecommand\transparent[1]{%
    \errmessage{(Inkscape) Transparency is used (non-zero) for the text in Inkscape, but the package 'transparent.sty' is not loaded}%
    \renewcommand\transparent[1]{}%
  }%
  \providecommand\rotatebox[2]{#2}%
  \newcommand*\fsize{\dimexpr\f@size pt\relax}%
  \newcommand*\lineheight[1]{\fontsize{\fsize}{#1\fsize}\selectfont}%
  \ifx\svgwidth\undefined%
    \setlength{\unitlength}{110.47261234bp}%
    \ifx\svgscale\undefined%
      \relax%
    \else%
      \setlength{\unitlength}{\unitlength * \real{\svgscale}}%
    \fi%
  \else%
    \setlength{\unitlength}{\svgwidth}%
  \fi%
  \global\let\svgwidth\undefined%
  \global\let\svgscale\undefined%
  \makeatother%
  \begin{picture}(1,0.87230418)%
    \lineheight{1}%
    \setlength\tabcolsep{0pt}%
    \put(0,0){\includegraphics[width=\unitlength,page=1]{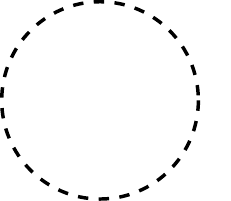}}%
    \put(0.51435908,0.08312927){\makebox(0,0)[lt]{\lineheight{1.25}\smash{\begin{tabular}[t]{l}\tiny $k+1$\end{tabular}}}}%
    \put(0.5303417,0.72465114){\makebox(0,0)[lt]{\lineheight{1.25}\smash{\begin{tabular}[t]{l}\tiny $k+1$\end{tabular}}}}%
    \put(0.68310781,0.40792159){\makebox(0,0)[lt]{\lineheight{1.25}\smash{\begin{tabular}[t]{l}\tiny $1$\end{tabular}}}}%
    \put(0.03203106,0.41130009){\makebox(0,0)[lt]{\lineheight{1.25}\smash{\begin{tabular}[t]{l}\tiny $k$\end{tabular}}}}%
    \put(0,0){\includegraphics[width=\unitlength,page=2]{bigon.pdf}}%
  \end{picture}%
\endgroup%
}\;)$, is defined as 
    \begin{equation} \label{eq:bubblemap}
    {\small \raisebox{-1.6\baselineskip}{\def\svgwidth{145pt}
\begingroup%
  \makeatletter%
  \providecommand\color[2][]{%
    \errmessage{(Inkscape) Color is used for the text in Inkscape, but the package 'color.sty' is not loaded}%
    \renewcommand\color[2][]{}%
  }%
  \providecommand\transparent[1]{%
    \errmessage{(Inkscape) Transparency is used (non-zero) for the text in Inkscape, but the package 'transparent.sty' is not loaded}%
    \renewcommand\transparent[1]{}%
  }%
  \providecommand\rotatebox[2]{#2}%
  \newcommand*\fsize{\dimexpr\f@size pt\relax}%
  \newcommand*\lineheight[1]{\fontsize{\fsize}{#1\fsize}\selectfont}%
  \ifx\svgwidth\undefined%
    \setlength{\unitlength}{187.53747138bp}%
    \ifx\svgscale\undefined%
      \relax%
    \else%
      \setlength{\unitlength}{\unitlength * \real{\svgscale}}%
    \fi%
  \else%
    \setlength{\unitlength}{\svgwidth}%
  \fi%
  \global\let\svgwidth\undefined%
  \global\let\svgscale\undefined%
  \makeatother%
  \begin{picture}(1,0.27879566)%
    \lineheight{1}%
    \setlength\tabcolsep{0pt}%
    \put(0,0){\includegraphics[width=\unitlength,page=1]{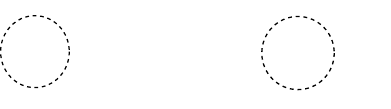}}%
    \put(0.11387894,0.07755285){\color[rgb]{0,0.53333333,0.63137255}\makebox(0,0)[lt]{\lineheight{1.25}\smash{\begin{tabular}[t]{l}$p_{k+1}$\end{tabular}}}}%
    \put(0,0){\includegraphics[width=\unitlength,page=2]{bubblemap.pdf}}%
    \put(0.28286403,0.17417609){\color[rgb]{0,0,0}\makebox(0,0)[lt]{\lineheight{1.25}\smash{\begin{tabular}[t]{l}$\textit{bubble}_{\alpha}$\end{tabular}}}}%
    \put(0.72408368,0.00887325){\makebox(0,0)[lt]{\lineheight{1.25}\smash{\begin{tabular}[t]{l}\tiny $k+1$\end{tabular}}}}%
    \put(0.71687313,0.24638538){\makebox(0,0)[lt]{\lineheight{1.25}\smash{\begin{tabular}[t]{l}\tiny $k+1$\end{tabular}}}}%
    \put(0.81332846,0.14016391){\makebox(0,0)[lt]{\lineheight{1.25}\smash{\begin{tabular}[t]{l}\tiny $1$\end{tabular}}}}%
    \put(0.67270849,0.10243869){\makebox(0,0)[lt]{\lineheight{1.25}\smash{\begin{tabular}[t]{l}\tiny $k$\end{tabular}}}}%
    \put(0,0){\includegraphics[width=\unitlength,page=3]{bubblemap.pdf}}%
    \put(0.11012888,0.186372){\makebox(0,0)[lt]{\lineheight{1.25}\smash{\begin{tabular}[t]{l}\tiny $k+1$\end{tabular}}}}%
    \put(0,0){\includegraphics[width=\unitlength,page=4]{bubblemap.pdf}}%
    \put(0.78439343,0.19152722){\color[rgb]{0,0.53333333,0.63137255}\makebox(0,0)[lt]{\lineheight{1.25}\smash{\begin{tabular}[t]{l}$p_{k+1}$\end{tabular}}}}%
    \put(0.81752307,0.09046258){\color[rgb]{0,0.33333333,0.83137255}\makebox(0,0)[lt]{\lineheight{1.25}\smash{\begin{tabular}[t]{l}$\alpha$\end{tabular}}}}%
    \put(0,0){\includegraphics[width=\unitlength,page=5]{bubblemap.pdf}}%
  \end{picture}%
\endgroup%
}}
    \end{equation}
    We will call \emph{bubble} $=$ $\emph{bubble}_{0}$.
    
    \item For $0 \leq \alpha \leq k$, the map $\emph{unbubble}_{\alpha}$: $\mathscr{S}_1 (\;\;\raisebox{-0.65\baselineskip}{\def\svgwidth{26pt}}\;) \longrightarrow \mathscr{S}_1 (\;\;\raisebox{-0.65\baselineskip}{\def\svgwidth{24pt}}\;)$ is defined as
    \begin{equation} \label{eq:unbubble}
    {\small \raisebox{-2.4\baselineskip}{\def\svgwidth{210pt}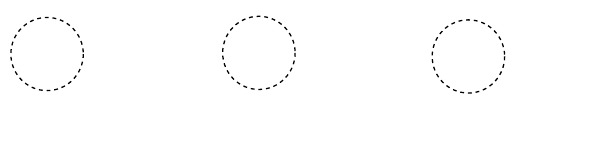}}
    \end{equation}
    where $\widetilde{p}_{\alpha}$ is the homogeneous symmetric polynomial
    \[
    \widetilde{p}_{\alpha}(x_1,\ldots,x_{k+1})=\sum_{i=1}^{k+1} \frac{p_k(x_1,\ldots,\widehat{x_i},\ldots,x_{k+1}) \cdot p_1(x_i) \cdot x_i^{\alpha}}{\displaystyle\prod_{\substack{j\in\{1,\ldots,k+1\} \\ j\ne i}} (x_j-x_i)}.
    \]
    We will call \emph{unbubble} $=$ $\emph{unbubble}_{0}$. A more general version of the bubble and unbubble maps exists, replacing the label $1$ by any $h\in \N$ and $k+1$ by $k+h$, but we won't need it.

    \item The \emph{tree isomorphism}: $\mathscr{S}_1 (\;\raisebox{-1\baselineskip}{\def\svgwidth{42pt}
\begingroup%
  \makeatletter%
  \providecommand\color[2][]{%
    \errmessage{(Inkscape) Color is used for the text in Inkscape, but the package 'color.sty' is not loaded}%
    \renewcommand\color[2][]{}%
  }%
  \providecommand\transparent[1]{%
    \errmessage{(Inkscape) Transparency is used (non-zero) for the text in Inkscape, but the package 'transparent.sty' is not loaded}%
    \renewcommand\transparent[1]{}%
  }%
  \providecommand\rotatebox[2]{#2}%
  \newcommand*\fsize{\dimexpr\f@size pt\relax}%
  \newcommand*\lineheight[1]{\fontsize{\fsize}{#1\fsize}\selectfont}%
  \ifx\svgwidth\undefined%
    \setlength{\unitlength}{58.12059785bp}%
    \ifx\svgscale\undefined%
      \relax%
    \else%
      \setlength{\unitlength}{\unitlength * \real{\svgscale}}%
    \fi%
  \else%
    \setlength{\unitlength}{\svgwidth}%
  \fi%
  \global\let\svgwidth\undefined%
  \global\let\svgscale\undefined%
  \makeatother%
  \begin{picture}(1,0.85444464)%
    \lineheight{1}%
    \setlength\tabcolsep{0pt}%
    \put(0.10203668,0.70298682){\makebox(0,0)[lt]{\lineheight{1.25}\smash{\begin{tabular}[t]{l}\tiny $a$\end{tabular}}}}%
    \put(0,0){\includegraphics[width=\unitlength,page=1]{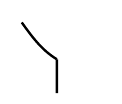}}%
    \put(0.46797916,0.80215574){\makebox(0,0)[lt]{\lineheight{1.25}\smash{\begin{tabular}[t]{l}\tiny $b$\end{tabular}}}}%
    \put(0.14743763,0.0143156){\makebox(0,0)[lt]{\lineheight{1.25}\smash{\begin{tabular}[t]{l}\tiny $a+b+c$\end{tabular}}}}%
    \put(0,0){\includegraphics[width=\unitlength,page=2]{treel.pdf}}%
    \put(0.70478275,0.69794488){\makebox(0,0)[lt]{\lineheight{1.25}\smash{\begin{tabular}[t]{l}\tiny $c$\end{tabular}}}}%
    \put(0,0.36087916){\makebox(0,0)[lt]{\lineheight{1.25}\smash{\begin{tabular}[t]{l}\tiny $a+b$\end{tabular}}}}%
    \put(0,0){\includegraphics[width=\unitlength,page=3]{treel.pdf}}%
  \end{picture}%
\endgroup%
}) \simeq \mathscr{S}_1 (\;\;\;\raisebox{-1\baselineskip}{\def\svgwidth{39.5pt}
\begingroup%
  \makeatletter%
  \providecommand\color[2][]{%
    \errmessage{(Inkscape) Color is used for the text in Inkscape, but the package 'color.sty' is not loaded}%
    \renewcommand\color[2][]{}%
  }%
  \providecommand\transparent[1]{%
    \errmessage{(Inkscape) Transparency is used (non-zero) for the text in Inkscape, but the package 'transparent.sty' is not loaded}%
    \renewcommand\transparent[1]{}%
  }%
  \providecommand\rotatebox[2]{#2}%
  \newcommand*\fsize{\dimexpr\f@size pt\relax}%
  \newcommand*\lineheight[1]{\fontsize{\fsize}{#1\fsize}\selectfont}%
  \ifx\svgwidth\undefined%
    \setlength{\unitlength}{55.16977617bp}%
    \ifx\svgscale\undefined%
      \relax%
    \else%
      \setlength{\unitlength}{\unitlength * \real{\svgscale}}%
    \fi%
  \else%
    \setlength{\unitlength}{\svgwidth}%
  \fi%
  \global\let\svgwidth\undefined%
  \global\let\svgscale\undefined%
  \makeatother%
  \begin{picture}(1,0.90014446)%
    \lineheight{1}%
    \setlength\tabcolsep{0pt}%
    \put(0.04762221,0.74058731){\makebox(0,0)[lt]{\lineheight{1.25}\smash{\begin{tabular}[t]{l}\tiny $a$\end{tabular}}}}%
    \put(0.29719344,0.84505882){\makebox(0,0)[lt]{\lineheight{1.25}\smash{\begin{tabular}[t]{l}\tiny $b$\end{tabular}}}}%
    \put(0.09545187,0.01508129){\makebox(0,0)[lt]{\lineheight{1.25}\smash{\begin{tabular}[t]{l}\tiny $a+b+c$\end{tabular}}}}%
    \put(0.68260687,0.73527569){\makebox(0,0)[lt]{\lineheight{1.25}\smash{\begin{tabular}[t]{l}\tiny $c$\end{tabular}}}}%
    \put(0.58222002,0.40372874){\makebox(0,0)[lt]{\lineheight{1.25}\smash{\begin{tabular}[t]{l}\tiny $b+c$\end{tabular}}}}%
    \put(0,0){\includegraphics[width=\unitlength,page=1]{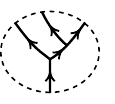}}%
  \end{picture}%
\endgroup%
})$ is an isomorphism of $R$-modules, realised by 
    \begin{equation} \label{eq:treeiso}
    {\small \raisebox{-1.7\baselineskip}{\def\svgwidth{170pt}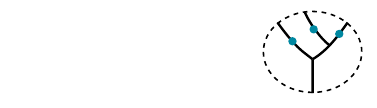}}
    \end{equation}    
    It still holds for vinyl graphs containing the tangles rotated by $\pi$ (i.e.\ with merge vertices instead of split ones). Again, by \cref{lem:dotmigration}, it is enough to define the tree isomorphism on decorated graphs with trivial decorations on the edges labeled $a+b$ (or $b+c$ for its inverse) and $a+b+c$. 
    
\end{itemize}

\begin{lem}[\cite{zbMATH07206869}, identity (31)] \label{lem:un-bubble}
There is an isomorphism of graded $R$-modules
\begin{equation} \label{eq:un-bubble}
\mathscr{S}_1 (\;\;\raisebox{-0.65\baselineskip}{\def\svgwidth{26pt}}\;) \quad\simeq\quad [k+1] \; \mathscr{S}_1 (\;\;\raisebox{-0.65\baselineskip}{\def\svgwidth{24pt}}\;)    
\end{equation}

realised by the morphisms $\arraycolsep=1.4pt\def\arraystretch{1.4} \left(\begin{array}{c}
    \textit{unbubble}_{0} \\
    \vdots       \\
    \textit{unbubble}_{k}
\end{array}\right)$ and its inverse $\left(\textit{bubble}_{k} \quad \cdots \quad \textit{bubble}_{0} \right).$
\end{lem}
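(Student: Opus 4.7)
The plan is to verify that the column $D = (\textit{unbubble}_0, \ldots, \textit{unbubble}_k)^T$ and the row $U = (\textit{bubble}_k, \ldots, \textit{bubble}_0)$ are mutually inverse (up to a sign absorbed by the sign conventions), by combining a graded-rank comparison from \cref{prop:S1} with an explicit computation of the composition $D \circ U$.

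First, I match graded ranks. The bigon contains a single split vertex with labels $(k, 1, k+1)$, so \cref{prop:S1} gives that its state space is a free graded $R$-module of rank $\left[\substack{k+1 \\ 1}\right] = [k+1]$. The single loop labeled $k+1$ has no split vertex, so its state space is free of graded rank $1$, concentrated in degree $0$; multiplying by $[k+1]$ yields graded rank $[k+1]$ on the right-hand side as well. Both sides are therefore free of the same finite graded rank, and it is enough to show that $D \circ U$ is a unit multiple of the identity.

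Next, I compute $D \circ U$. Its $(i, j)$ entry is $\textit{unbubble}_i \circ \textit{bubble}_{k-j}$. Applied to the empty decoration of the strand, $\textit{bubble}_{k-j}$ produces the bigon decorated by $x_1^{k-j}$ on the $1$-labeled edge and trivially elsewhere; then the formula in \cref{eq:unbubble}, with $p_k = 1$ and $p_1(x) = x^{k-j}$, yields
\[
\tilde p_i \;=\; \sum_{\ell=1}^{k+1} \frac{x_\ell^{\,i+k-j}}{\displaystyle\prod_{m \neq \ell}(x_m - x_\ell)}.
\]
Pulling the $(-1)^k$ out of the $k$-factor denominator and invoking the classical Lagrange identity
\[
\sum_{\ell=1}^{n} \frac{x_\ell^N}{\prod_{m \neq \ell}(x_\ell - x_m)} \;=\; h_{N-n+1}(x_1, \ldots, x_n),
\]
where $h_r$ is the complete homogeneous symmetric polynomial (with $h_r = 0$ for $r<0$ and $h_0 = 1$), applied with $n = k+1$ and $N = i + k - j$, gives $\tilde p_i = (-1)^k\, h_{i-j}(x_1, \ldots, x_{k+1})$.

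Finally, I distinguish three cases. When $i < j$, $h_{i-j} = 0$ and the entry vanishes. When $i = j$, $h_0 = 1$ and $\tilde p_i = (-1)^k$. When $i > j$, $h_{i-j}$ is a symmetric polynomial of positive polynomial degree, and in $\mathscr{S}_1$ of a single $(k+1)$-loop the only coloring is trivial and $\langle \Gamma^d \rangle_1 = d(0,\ldots,0)$, so any decoration with vanishing constant term pairs to $0$ against every other decoration and represents $0$ in the state space. Hence $D \circ U = (-1)^k\, \Id$ on $[k+1]\mathscr{S}_1(\text{strand})$. Since $D \circ U$ is an isomorphism between free graded $R$-modules of equal graded rank, both $D$ and $U$ are isomorphisms, proving \cref{eq:un-bubble}; the global sign $(-1)^k$ is absorbed into the sign conventions of \cite{zbMATH07206869}. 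The main obstacle is recognizing the Lagrange-interpolation sum and carefully tracking the sign coming from reordering the factors in the denominator; the rest is immediate from the definitions and from \cref{prop:S1}.
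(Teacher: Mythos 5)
This lemma is quoted from Robert--Wagner (\cite{zbMATH07206869}, identity (31)) and the paper gives no proof of its own, so your attempt can only be judged on its internal correctness. Your overall strategy --- matching graded ranks via \cref{prop:S1} and computing the composite $D\circ U$ entrywise using the Lagrange interpolation identity --- is the natural one, and the computation $\widetilde{p}_i=(-1)^k h_{i-j}(x_1,\ldots,x_{k+1})$, hence vanishing of the entries with $i<j$ and the value $(-1)^k$ on the diagonal, is correct.

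The gap is in the case $i>j$. The lemma is a \emph{local} statement: the bigon and the strand are tangles inside an arbitrary ambient vinyl graph $\Gamma'$ (this is exactly how it is used in \cref{lem:un-bubblepreservesdur}), and the $(i,j)$ entry of $D\circ U$ is multiplication by $(-1)^k h_{i-j}(x_1,\ldots,x_{k+1})$ on the $(k+1)$-labeled edge of $\Gamma'$, viewed as an endomorphism of $\mathscr{S}_1(\Gamma')$. Your argument that a decoration with vanishing constant term ``represents $0$ in the state space'' is valid only when $\Gamma'$ is the bare closed loop, where the $1$-evaluation is literally the constant term of the decoration; for a general ambient graph a positive-degree decoration is typically a nonzero class (e.g.\ $x^i$ on a $1$-labeled edge is a d.u.r.\ basis element), and the vanishing of $h_r(x_1,\ldots,x_{k+1})$ on a $(k+1)$-edge for \emph{every} ambient graph is a genuine local relation that your degree argument does not establish. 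Fortunately you do not need it: the matrix of $D\circ U$ is lower triangular with diagonal entries $(-1)^k\Id$, hence equal to $(-1)^k$ times a unipotent endomorphism and therefore an automorphism of $[k+1]\,\mathscr{S}_1(\Gamma')$ regardless of what the strictly lower entries are. Your graded-rank comparison does extend to arbitrary ambient graphs (the bigon contributes exactly one additional split vertex, with quantum binomial $\left[\substack{k+1\\1}\right]=[k+1]$), and together with freeness this forces the complement of the image of $U$ to have rank zero, so both $U$ and $D$ are isomorphisms. Finally, the residual $(-1)^k$ on the diagonal means the two displayed matrices are mutually inverse only up to sign and up to the unipotent correction; as you note, this is a normalization issue that does not affect the existence of the isomorphism, but it does mean the literal ``and its inverse'' phrasing requires the conventions of \cite{zbMATH07206869} rather than the formulas as reproduced here.
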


\begin{lem}[\cite{zbMATH07206869}, identity (32)] \label{lem:id17general}
The following, as well as its mirror image, are isomorphisms of graded $R$-modules
\begin{equation} \label{eq:id17general}
\mathscr{S}_1(\;\;\raisebox{-1.1\baselineskip}{\def\svgwidth{35pt}
\begingroup%
  \makeatletter%
  \providecommand\color[2][]{%
    \errmessage{(Inkscape) Color is used for the text in Inkscape, but the package 'color.sty' is not loaded}%
    \renewcommand\color[2][]{}%
  }%
  \providecommand\transparent[1]{%
    \errmessage{(Inkscape) Transparency is used (non-zero) for the text in Inkscape, but the package 'transparent.sty' is not loaded}%
    \renewcommand\transparent[1]{}%
  }%
  \providecommand\rotatebox[2]{#2}%
  \newcommand*\fsize{\dimexpr\f@size pt\relax}%
  \newcommand*\lineheight[1]{\fontsize{\fsize}{#1\fsize}\selectfont}%
  \ifx\svgwidth\undefined%
    \setlength{\unitlength}{56.34317135bp}%
    \ifx\svgscale\undefined%
      \relax%
    \else%
      \setlength{\unitlength}{\unitlength * \real{\svgscale}}%
    \fi%
  \else%
    \setlength{\unitlength}{\svgwidth}%
  \fi%
  \global\let\svgwidth\undefined%
  \global\let\svgscale\undefined%
  \makeatother%
  \begin{picture}(1,0.90660172)%
    \lineheight{1}%
    \setlength\tabcolsep{0pt}%
    \put(0,0){\includegraphics[width=\unitlength,page=1]{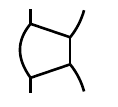}}%
    \put(0.00643734,0.85109416){\color[rgb]{0,0,0}\makebox(0,0)[lt]{\lineheight{1.25}\smash{\begin{tabular}[t]{l}\tiny $k+1$\end{tabular}}}}%
    \put(0.00713094,0.01595557){\color[rgb]{0,0,0}\makebox(0,0)[lt]{\lineheight{1.25}\smash{\begin{tabular}[t]{l}\tiny $k+1$\end{tabular}}}}%
    \put(0,0.43312184){\color[rgb]{0,0,0}\makebox(0,0)[lt]{\lineheight{1.25}\smash{\begin{tabular}[t]{l}\tiny $k$\end{tabular}}}}%
    \put(0.68933403,0.01476721){\color[rgb]{0,0,0}\makebox(0,0)[lt]{\lineheight{1.25}\smash{\begin{tabular}[t]{l}\tiny $1$\end{tabular}}}}%
    \put(0.67283822,0.85266329){\color[rgb]{0,0,0}\makebox(0,0)[lt]{\lineheight{1.25}\smash{\begin{tabular}[t]{l}\tiny $1$\end{tabular}}}}%
    \put(0.36644135,0.33209884){\color[rgb]{0,0,0}\makebox(0,0)[lt]{\lineheight{1.25}\smash{\begin{tabular}[t]{l}\tiny $1$\end{tabular}}}}%
    \put(0.33910636,0.54721834){\color[rgb]{0,0,0}\makebox(0,0)[lt]{\lineheight{1.25}\smash{\begin{tabular}[t]{l}\tiny $1$\end{tabular}}}}%
    \put(0.63742114,0.43074434){\color[rgb]{0,0,0}\makebox(0,0)[lt]{\lineheight{1.25}\smash{\begin{tabular}[t]{l}\tiny $2$\end{tabular}}}}%
    \put(0,0){\includegraphics[width=\unitlength,page=2]{square.pdf}}%
  \end{picture}%
\endgroup%
}) \quad \simeq \quad [k] \; \mathscr{S}_1(\raisebox{-0.8\baselineskip}{\def\svgwidth{32pt}
\begingroup%
  \makeatletter%
  \providecommand\color[2][]{%
    \errmessage{(Inkscape) Color is used for the text in Inkscape, but the package 'color.sty' is not loaded}%
    \renewcommand\color[2][]{}%
  }%
  \providecommand\transparent[1]{%
    \errmessage{(Inkscape) Transparency is used (non-zero) for the text in Inkscape, but the package 'transparent.sty' is not loaded}%
    \renewcommand\transparent[1]{}%
  }%
  \providecommand\rotatebox[2]{#2}%
  \newcommand*\fsize{\dimexpr\f@size pt\relax}%
  \newcommand*\lineheight[1]{\fontsize{\fsize}{#1\fsize}\selectfont}%
  \ifx\svgwidth\undefined%
    \setlength{\unitlength}{50.09681663bp}%
    \ifx\svgscale\undefined%
      \relax%
    \else%
      \setlength{\unitlength}{\unitlength * \real{\svgscale}}%
    \fi%
  \else%
    \setlength{\unitlength}{\svgwidth}%
  \fi%
  \global\let\svgwidth\undefined%
  \global\let\svgscale\undefined%
  \makeatother%
  \begin{picture}(1,0.90186219)%
    \lineheight{1}%
    \setlength\tabcolsep{0pt}%
    \put(0,0){\includegraphics[width=\unitlength,page=1]{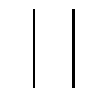}}%
    \put(0,0.8411984){\color[rgb]{0,0,0}\makebox(0,0)[lt]{\lineheight{1.25}\smash{\begin{tabular}[t]{l}\tiny $k+1$\end{tabular}}}}%
    \put(0.65059843,0.84066137){\color[rgb]{0,0,0}\makebox(0,0)[lt]{\lineheight{1.25}\smash{\begin{tabular}[t]{l}\tiny $1$\end{tabular}}}}%
    \put(0,0){\includegraphics[width=\unitlength,page=2]{twostrands_k+1.pdf}}%
  \end{picture}%
\endgroup%
}) \; \oplus \; \mathscr{S}_1(\;\raisebox{-1.1\baselineskip}{\def\svgwidth{30pt}
\begingroup%
  \makeatletter%
  \providecommand\color[2][]{%
    \errmessage{(Inkscape) Color is used for the text in Inkscape, but the package 'color.sty' is not loaded}%
    \renewcommand\color[2][]{}%
  }%
  \providecommand\transparent[1]{%
    \errmessage{(Inkscape) Transparency is used (non-zero) for the text in Inkscape, but the package 'transparent.sty' is not loaded}%
    \renewcommand\transparent[1]{}%
  }%
  \providecommand\rotatebox[2]{#2}%
  \newcommand*\fsize{\dimexpr\f@size pt\relax}%
  \newcommand*\lineheight[1]{\fontsize{\fsize}{#1\fsize}\selectfont}%
  \ifx\svgwidth\undefined%
    \setlength{\unitlength}{49.2042829bp}%
    \ifx\svgscale\undefined%
      \relax%
    \else%
      \setlength{\unitlength}{\unitlength * \real{\svgscale}}%
    \fi%
  \else%
    \setlength{\unitlength}{\svgwidth}%
  \fi%
  \global\let\svgwidth\undefined%
  \global\let\svgscale\undefined%
  \makeatother%
  \begin{picture}(1,1.01052298)%
    \lineheight{1}%
    \setlength\tabcolsep{0pt}%
    \put(0.00974256,0.94789204){\color[rgb]{0,0,0}\makebox(0,0)[lt]{\lineheight{1.25}\smash{\begin{tabular}[t]{l}\tiny $k+1$\end{tabular}}}}%
    \put(0,0.01690973){\color[rgb]{0,0,0}\makebox(0,0)[lt]{\lineheight{1.25}\smash{\begin{tabular}[t]{l}\tiny $k+1$\end{tabular}}}}%
    \put(0.61646793,0.0171418){\color[rgb]{0,0,0}\makebox(0,0)[lt]{\lineheight{1.25}\smash{\begin{tabular}[t]{l}\tiny $1$\end{tabular}}}}%
    \put(0.5896638,0.94875879){\color[rgb]{0,0,0}\makebox(0,0)[lt]{\lineheight{1.25}\smash{\begin{tabular}[t]{l}\tiny $1$\end{tabular}}}}%
    \put(0,0){\includegraphics[width=\unitlength,page=1]{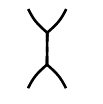}}%
    \put(0.52906791,0.4756937){\color[rgb]{0,0,0}\makebox(0,0)[lt]{\lineheight{1.25}\smash{\begin{tabular}[t]{l}\tiny $k+2$\end{tabular}}}}%
    \put(0,0){\includegraphics[width=\unitlength,page=2]{dumble_k+1.pdf}}%
  \end{picture}%
\endgroup%
}).
\end{equation}
\end{lem}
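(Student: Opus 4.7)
My plan is to combine a graded-rank verification using \cref{prop:S1} with an explicit construction of the two projections and their sections.

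First, I would compute the graded ranks of both sides. The LHS has two split vertices, with labels $(k,1,k+1)$ and $(1,1,2)$, so
\[
\llangle \Gamma \rrangle_1 = \left[\substack{k+1 \\ k}\right]\left[\substack{2 \\ 1}\right] = [k+1][2].
\]
The RHS has rank $[k]\cdot 1 + [k+2]$: the parallel-strand graph has no split vertex, while the dumbbell has one split vertex with labels $(k+1,1,k+2)$. The quantum identity $[k+1][2]=[k+2]+[k]$ shows the two ranks agree. Since both modules are finitely generated, free, and graded by \cref{prop:S1}, it suffices to produce a degree-preserving direct-sum morphism that is a split surjection.

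Second, I would construct explicit morphisms using the elementary maps of \cref{sec:gl1eval}. The component to $\mathscr{S}_1$ of the dumbbell is obtained by applying an $\emph{unzip}$ \eqref{eq:unzip} on one of the internal $1$-labeled edges of the square (collapsing the central region into a single $(k+2)$-labeled arc), followed by a tree isomorphism \eqref{eq:treeiso} that rearranges the result into the dumbbell configuration. The $k$ components to $\mathscr{S}_1$ of the parallel strands come from the maps $\emph{unbubble}_\alpha$ \eqref{eq:unbubble}, for $\alpha = 0,\dots,k-1$, applied after a tree isomorphism that exposes the central $(k,1)$-bigon of the square. Sections in the opposite direction are built analogously using $\emph{zip}$, $\emph{bubble}_\alpha$ \eqref{eq:bubblemap}, and the inverse tree isomorphism.

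Third, to verify that these maps assemble into an isomorphism, I would use \cref{lem:dotmigration} to transport all decorations onto the outer edges, reducing the check to a finite family of decorated representatives indexed by Schur polynomials in $T(k,1)$ and in $T(1,k+1)$. The compositions $\emph{bubble}_\alpha \circ \emph{unbubble}_\beta$ then reduce to a divided-difference computation on symmetric polynomials involving the $\widetilde{p}_\alpha$ of \eqref{eq:unbubble}, following the strategy used in the proof of \cref{lem:un-bubble}. Matching of degrees is automatic from \cref{def:degdecoratedgraph}.

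The main obstacle is that a direct application of \cref{lem:un-bubble} to the central $(k,1)$-bigon of the square would yield $[k+1]$ rather than $[k]$ copies of the parallel-strand module; one of those copies must be re-identified with the dumbbell summand. Carefully isolating that extra copy, and verifying that the image of the dumbbell section intersects the parallel-strand summand trivially, is the most delicate point; once this is done, the rank equality of the first step upgrades the split surjection into the desired isomorphism.
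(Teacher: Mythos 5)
You should first note that the paper does not actually prove this statement: it is imported verbatim from Robert--Wagner (\cite{zbMATH07206869}, identity (32)), and the text surrounding the lemma only records the explicit morphisms $\phi=(\phi_0^0\ \cdots\ \phi_0^{k-1}\ \phi_1)$ and $\psi=(\psi_0^{k-1},\dots,\psi_0^0,\psi_1)^{T}$ that realise it. Your opening rank computation is correct ($[2][k+1]=[k]+[k+2]$ is the usual MOY square relation), and the logic ``degree-preserving surjection between free graded modules of equal graded rank $\Rightarrow$ isomorphism'' is sound over $R=\Z$. The problem is that everything that makes the lemma nontrivial is concentrated in producing that surjection, and this is precisely the part you defer as ``the most delicate point.'' Acknowledging the obstacle is not the same as resolving it: without verifying that the $(k+1)\times(k+1)$ block matrix of compositions $\psi\circ\phi$ (or $\phi\circ\psi$) is invertible, the argument is a plan, not a proof.

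There are also concrete errors in the maps you propose. The projection onto the dumbbell summand cannot begin with an \emph{unzip}: no internal $1$-labeled edge of the square sits in the local configuration on which \emph{unzip} is defined, and an \emph{unzip} never produces a higher label, so it cannot ``collapse the central region into a single $(k+2)$-labeled arc.'' Creating the $(k+2)$-edge requires a \emph{zip} merging the central $k$- and $2$-labeled edges; the map recorded in the paper is $\psi_1=\textit{unbubble}\circ\textit{tree iso.}\circ\textit{zip}$, and you have also dropped the final \emph{unbubble}. Likewise, the $k$ projections onto the parallel-strand summands are $\psi_0^{\alpha}=\textit{unbubble}\circ\textit{unzip}\circ\xi^{\alpha}$: the degree shifts realising the multiplicity $[k]$ come from the decoration maps $\xi^{\alpha}$ (and $\varphi^{\alpha}$ for the sections), which your proposal omits entirely, replacing them by $\textit{unbubble}_{\alpha}$ for $\alpha=0,\dots,k-1$ after a tree isomorphism. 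That substitution is not obviously equivalent, and it is exactly where your ``extra copy'' problem lives: \cref{lem:un-bubble} applied to the $(k,1)$-bigon yields $[k+1]$ copies, and separating $k$ of them from the dumbbell summand is the content of the lemma. As written, the proposal would need both the corrected maps and the explicit verification of the compositions before it could be accepted.
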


The maps realising isomorphism \cref{eq:id17general} are compositions of the elementary morphisms defined above, plus two maps $\varphi^{\alpha},\xi^{\alpha}$ that add decorations on some edges of a decorated graph. For $0 \leq \alpha \leq k-1$, let 
$$\varphi^{\alpha},\xi^{\alpha}\thinspace\colon \, \mathscr{S}_1(\;\;\raisebox{-1.1\baselineskip}{\def\svgwidth{33pt}}) \longrightarrow \mathscr{S}_1(\;\;\raisebox{-1.1\baselineskip}{\def\svgwidth{33pt}})$$ 
be the $R$-module morphisms given by 
\begin{equation*}
{\small \raisebox{-1.3\baselineskip}{\def\svgwidth{270pt}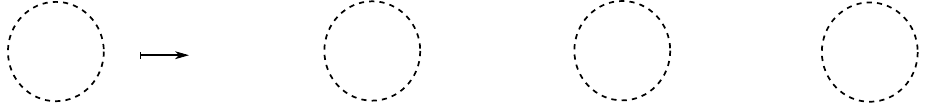}}
\end{equation*}

Then we define the morphisms of $R$-modules $\phi_0^{\alpha},\phi_1$ via the movies \cref{eq:moviephi0alpha} and \cref{eq:moviephi1} respectively (each graph $\Gamma$ stands for $\mathscr{S}_1(\Gamma)$). 
\begin{equation} \label{eq:moviephi0alpha}
{\small \raisebox{-1.5\baselineskip}{\def\svgwidth{260pt}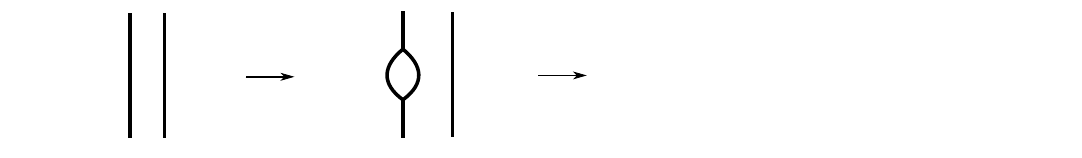}}
\end{equation}
\begin{equation} \label{eq:moviephi1}
{\small \raisebox{-1.5\baselineskip}{\def\svgwidth{260pt}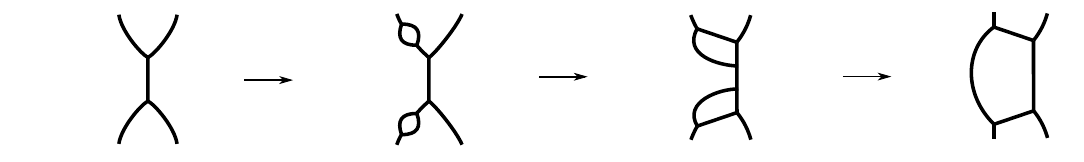}}
\end{equation}

Similarly, we define 
\[
\psi_0^{\alpha}= \textit{unbubble $\circ$ unzip $\circ$ $\xi^{\alpha}$}, \qquad  \psi_1= \textit{unbubble $\circ$ tree iso. $\circ$ zip},
\]

The isomorphism of \cref{lem:id17general} is then realised by the maps $\psi$ and its inverse $\phi$ defined as follows: 
$$\psi=\arraycolsep=1.4pt\def\arraystretch{1.4} \left(\begin{array}{c}
    \psi_0^{k-1} \\
    \psi_0^{k-2} \\
    \vdots       \\
    \psi_0^{0}   \\
    \psi_1
\end{array}\right), \qquad 
\phi=\left(\phi_0^0 \quad \phi_0^1 \quad \cdots \quad \phi_0^{k-1} \quad \phi_1 \right) .$$

\section{The d.u.r.\ set}

Given a vinyl graph $\Gamma$, we are interested in a special family of decorated graphs in $\mathscr{S}_1(\Gamma)$. First, order the split vertices of $\Gamma$ as $v_1,\ldots , v_n$ and consider the family of decorations $d$ on $\Gamma$ satisfying:
\begin{align*}
&d(e^r(v_i))=p_i \quad &&\text{for $i=1,\ldots,n$ and $p_i$ a homogeneous symmetric} \\ 
&&&\text{polynomial in $\ell(e^r(v_i))$ variables}, \\
&d(e)=1 \quad &&\text{if $e \ne e^r(v_i)$ for all $i$}.
\end{align*}
i.e. the only edges where $d$ is (possibly) non-trivial are the upper-right edges of split vertices. We call $\overline{\textit{DUR}}(\Gamma)$ the family of decorated graphs with decoration $d$ as above. In order to make the decorations of these graphs explicit, we denote the decorated graphs $\Gamma^d \in \overline{\textit{DUR}}(\Gamma)$ by $\gamma_{p_1,\ldots,p_n}(\Gamma)$, where $p_i=d(e^r(v_i))$. When the underlying vinyl graph $\Gamma$ is clear, we omit it and write $\gamma_{p_1,\ldots,p_n}$. Lastly, if $\ell(e^r(v_i))=1$ for all $i$, and $p_i=x^{k_i}$ for some $k_i \in \N$, we simply write $\gamma_{k_1,\ldots,k_n}(\Gamma)$ instead of $\gamma_{p_1,\ldots,p_n}(\Gamma)$.

\begin{dfn} \label{def:d.u.r.}
Given a vinyl graph $\Gamma$ with split vertices $v_1,\ldots,v_n$, let $a_i=\ell(e^r(v_i))$ and $b_i=\ell(e^l(v_i))$. The \emph{d.u.r.\ set} (d.u.r.\ = decoration on upper-right) of $\Gamma$, $\textit{DUR}(\Gamma)$, consists of all decorated graphs $\gamma_{p_1,\ldots,p_n}(\Gamma) \in \overline{\textit{DUR}}(\Gamma)$ such that, for all $i$, $p_i$ is a Schur polynomial $s_{\lambda_i}$ with $\lambda_i \in T(a_i,b_i)$.
\begin{equation*}
{\scriptsize \raisebox{-1\baselineskip}{\def\svgwidth{85pt}
\begingroup%
  \makeatletter%
  \providecommand\color[2][]{%
    \errmessage{(Inkscape) Color is used for the text in Inkscape, but the package 'color.sty' is not loaded}%
    \renewcommand\color[2][]{}%
  }%
  \providecommand\transparent[1]{%
    \errmessage{(Inkscape) Transparency is used (non-zero) for the text in Inkscape, but the package 'transparent.sty' is not loaded}%
    \renewcommand\transparent[1]{}%
  }%
  \providecommand\rotatebox[2]{#2}%
  \newcommand*\fsize{\dimexpr\f@size pt\relax}%
  \newcommand*\lineheight[1]{\fontsize{\fsize}{#1\fsize}\selectfont}%
  \ifx\svgwidth\undefined%
    \setlength{\unitlength}{124.04465576bp}%
    \ifx\svgscale\undefined%
      \relax%
    \else%
      \setlength{\unitlength}{\unitlength * \real{\svgscale}}%
    \fi%
  \else%
    \setlength{\unitlength}{\svgwidth}%
  \fi%
  \global\let\svgwidth\undefined%
  \global\let\svgscale\undefined%
  \makeatother%
  \begin{picture}(1,0.34496755)%
    \lineheight{1}%
    \setlength\tabcolsep{0pt}%
    \put(-0.00535343,0.02256542){\makebox(0,0)[lt]{\lineheight{1.25}\smash{\begin{tabular}[t]{l}$a_i+b_i$\end{tabular}}}}%
    \put(0.4243218,0.28801519){\makebox(0,0)[lt]{\lineheight{1.25}\smash{\begin{tabular}[t]{l}$a_i$\end{tabular}}}}%
    \put(0.01192683,0.27973448){\makebox(0,0)[lt]{\lineheight{1.25}\smash{\begin{tabular}[t]{l}$b_i$\end{tabular}}}}%
    \put(0,0){\includegraphics[width=\unitlength,page=1]{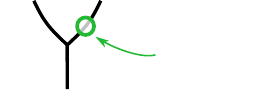}}%
    \put(0.62390715,0.11585143){\color[rgb]{0.14117647,0.7254902,0.19607843}\makebox(0,0)[lt]{\lineheight{1.25}\smash{\begin{tabular}[t]{l}\small $s_{\lambda_i}$\end{tabular}}}}%
  \end{picture}%
\endgroup%
}}    
\end{equation*}
We call \emph{d.u.r.\ decorated graphs} the elements of $\textit{DUR}(\Gamma)$.
\end{dfn}

The d.u.r.\ set behaves well with respect to disjoint union: the d.u.r set of $\Gamma\sqcup\Theta$ is canonically isomorphic to the Cartesian product of the d.u.r sets of $\Gamma$ and $\Theta$.

\begin{dfn} \label{def:elementary}
An \emph{elementary (vinyl) graph} is a vinyl graph $\Gamma$ having only $1$- and $2$-labeled edges, i.e.\ satisfying $\ell(E(\Gamma)) \subseteq \{1,2\}$.
\end{dfn}

Sometimes, when the labels on a vinyl graph are small, it will be convenient to represent edges labeled $\ell$ by $\ell$-uple strands, rather than writing down the label. See \cref{eq:twodumbles} for an example.

\begin{example} \label{ex:d.u.r.}
Consider the elementary graph $\Gamma$ obtained as the closure of the graph $\Gamma_{\text{open}}$ of \cref{eq:twodumbles}. 
\begin{equation} \label{eq:twodumbles}
{\small \raisebox{-2\baselineskip}{\def\svgwidth{210pt}
\begingroup%
  \makeatletter%
  \providecommand\color[2][]{%
    \errmessage{(Inkscape) Color is used for the text in Inkscape, but the package 'color.sty' is not loaded}%
    \renewcommand\color[2][]{}%
  }%
  \providecommand\transparent[1]{%
    \errmessage{(Inkscape) Transparency is used (non-zero) for the text in Inkscape, but the package 'transparent.sty' is not loaded}%
    \renewcommand\transparent[1]{}%
  }%
  \providecommand\rotatebox[2]{#2}%
  \newcommand*\fsize{\dimexpr\f@size pt\relax}%
  \newcommand*\lineheight[1]{\fontsize{\fsize}{#1\fsize}\selectfont}%
  \ifx\svgwidth\undefined%
    \setlength{\unitlength}{226.45982268bp}%
    \ifx\svgscale\undefined%
      \relax%
    \else%
      \setlength{\unitlength}{\unitlength * \real{\svgscale}}%
    \fi%
  \else%
    \setlength{\unitlength}{\svgwidth}%
  \fi%
  \global\let\svgwidth\undefined%
  \global\let\svgscale\undefined%
  \makeatother%
  \begin{picture}(1,0.23085383)%
    \lineheight{1}%
    \setlength\tabcolsep{0pt}%
    \put(0.55559054,0.10840195){\color[rgb]{0,0,0}\makebox(0,0)[lt]{\lineheight{1.25}\smash{\begin{tabular}[t]{l}$\gamma_{i,j}=$\end{tabular}}}}%
    \put(0,0){\includegraphics[width=\unitlength,page=1]{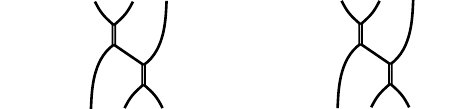}}%
    \put(0.93119295,0.18551604){\color[rgb]{0.04705882,0.04705882,0.69019608}\makebox(0,0)[lt]{\lineheight{1.25}\smash{\begin{tabular}[t]{l}$i=0,1$\end{tabular}}}}%
    \put(0,0){\includegraphics[width=\unitlength,page=2]{twodumbles.pdf}}%
    \put(0.92975875,0.04222704){\color[rgb]{0.04705882,0.04705882,0.69019608}\makebox(0,0)[lt]{\lineheight{1.25}\smash{\begin{tabular}[t]{l}$j=0,1$\end{tabular}}}}%
    \put(-0.00146618,0.10806579){\color[rgb]{0,0,0}\makebox(0,0)[lt]{\lineheight{1.25}\smash{\begin{tabular}[t]{l}$\Gamma_{\text{open}}=$\end{tabular}}}}%
    \put(0,0){\includegraphics[width=\unitlength,page=3]{twodumbles.pdf}}%
  \end{picture}%
\endgroup%
}}
\end{equation}

The decorated graphs $\gamma_{0,0}, \gamma_{0,1}, \gamma_{1,0}$ and $\gamma_{1,1}$ constitute the d.u.r.\ set of $\Gamma$.
\end{example}

Let $\Gamma$ be an elementary graph. By \cref{prop:S1}, $\mathscr{S}_{1}(\Gamma)$ is a finitely generated, free, graded $R$-module of graded rank $[2]^n$, where $n$ is the number of split vertices of $\Gamma$. Observe that the cardinality of the d.u.r.\ set of $\Gamma$ is $2^n$, i.e.\ the (ungraded) rank of the $R$-module $\mathscr{S}_{1}(\Gamma)$. Moreover, the degrees of the decorated graphs of $\textit{DUR}(\Gamma)$ correspond precisely to those of $\mathscr{S}_{1}(\Gamma)$. This hints to the fact that the d.u.r.\ set might be a good candidate for a basis of $\mathscr{S}_{1}(\Gamma)$. The goal of the next two sections will be to prove the following result.

\begin{theorem} \label{thm:d.u.r.}
Let $\Gamma$ be an elementary graph\footnote{It seems reasonable to expect that the theorem holds for all vinyl graphs, and not just elementary ones. However, in this setting, the proof would require working with general Schur polynomials, rather than elementary ones.}. The set $\textit{DUR}(\Gamma)$ is a basis for the graded $R$-module $\mathscr{S}_{1}(\Gamma)$.
\end{theorem}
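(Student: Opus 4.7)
The strategy is to first observe that the rank and grading of $\mathscr{S}_1(\Gamma)$ and of $\textit{DUR}(\Gamma)$ already match, so the theorem reduces to showing that the d.u.r.\ set is linearly independent in $\mathscr{S}_1(\Gamma)$ over $R$. Indeed, since $\mathscr{S}_1(\Gamma)$ is free of graded rank $[2]^n = (q+q^{-1})^n$ by \cref{prop:S1}, and since each split vertex of an elementary graph has labels $(1,1,2)$ so $T(1,1) = \{\emptyset,\square\}$ contributes exactly a factor $(q+q^{-1})$ to the graded count of d.u.r.\ decorated graphs, the problem is purely to establish that no nontrivial $R$-linear combination vanishes in the quotient defining $\mathscr{S}_1(\Gamma)$.

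The base case is when $\Gamma$ is a disjoint union of \emph{dumbbells} (each with one split and one merge vertex joined by two $1$-labeled edges and one $2$-labeled edge). By \cref{lem:monoidal} and the compatibility of $\textit{DUR}$ with disjoint unions, it is enough to treat a single dumbbell $D$. Here $\textit{DUR}(D) = \{\gamma_0,\gamma_1\}$, and $t_D = 1$, so by \cref{rem:degreemaxdecoration} the pairing $(\gamma_i,\gamma_j)_1$ vanishes unless $i+j=1$. A direct computation of the $\infty$-evaluation on the two colorings of $D$ yields $\langle D^{x_1}\rangle_\infty = x_1 + x_2$ (i.e.\ a sum over the two colorings of $x_{c(\text{marked edge})}$ divided by $x_i - x_j$), which specialises at $0$ to give $(\gamma_0,\gamma_1)_1 = \pm 1$. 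The Gram matrix is antidiagonal with unit entries, so $\{\gamma_0,\gamma_1\}$ is linearly independent in $\mathscr{S}_1(D)$, hence a basis.

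For the inductive step, I would argue by induction on the number of vertices of $\Gamma$, reducing any elementary graph to a disjoint union of dumbbells using the structural isomorphisms of Section 1. Concretely, \cref{lem:un-bubble} trades an isolated edge labeled $k+1$ (with $k=1$ in the elementary case) for a bubble summing a $[k+1]$-factor; and \cref{lem:id17general} rewrites any "square" configuration of two stacked split/merge pairs as a direct sum of two strictly simpler graphs realised by the explicit maps $\phi$ and $\psi$ built from \emph{zip}, \emph{unzip}, \emph{bubble}, \emph{unbubble}, and the tree isomorphism. Repeated application shrinks the open part of $\Gamma$ until only disjoint dumbbells remain. The key claim, to be verified in Section 4, is that each of these elementary morphisms sends $\textit{DUR}$ of the source to a set that expands over $\textit{DUR}$ of the target by a matrix whose determinant is a unit in $R$. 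This preservation is transparent for the tree isomorphism and bubble/unbubble (they only migrate or evaluate Schur decorations on edges other than upper-right ones, and \cref{lem:dotmigration} lets one rewrite the result in terms of upper-right Schur decorations), and the main computation is to check it for the composite $\phi_0^\alpha$ and $\phi_1$ of \cref{eq:moviephi0alpha} and \cref{eq:moviephi1}: the sign-alternating sum in the \emph{zip} map, combined with the specific shape of $\varphi^\alpha$ and $\xi^\alpha$, produces triangular change-of-basis matrices with $\pm 1$ on the diagonal when read in an appropriate ordering of Young diagrams, which is exactly the obstacle where care is needed and where restricting to labels in $\{1,2\}$ (so that only $T(1,1)$ and $T(2,1)$ appear) keeps the combinatorics tractable.
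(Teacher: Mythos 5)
There is a genuine gap in the inductive step. You claim that repeated application of \cref{lem:un-bubble} and \cref{lem:id17general} shrinks any elementary graph down to a disjoint union of dumbbells. This is false: the reduction of elementary graphs by the local moves \cref{eq:move1} and \cref{eq:move2} (this is \cref{lem:moveselementary}, quoted from [BPRW]) terminates not at dumbbells but at disjoint unions of the graphs $\Theta_{\ell}$, which for $\ell\geq 2$ are interlocking chains of split/merge vertices that cannot be disconnected into dumbbells by these moves. Establishing that the $\Theta_{\ell}$ (in fact the more general $\Gamma_{k,\ell}$) are d.u.r.\ is the technical core of the paper's argument and occupies all of \cref{sec:basisgammakl}: it is a separate induction on $\ell$ whose key ingredient, \cref{lem:rewritingdots}, rewrites decorated graphs such as $\gamma_{0,r+1,\theta_{\mathbf{a}}}$ in terms of the d.u.r.\ set by sliding decorations around the annulus and re-expanding them over the (inductively known) basis of $\mathscr{S}_1(\Theta_{\ell})$ via \cref{lem:monoidal}. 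This step is genuinely global --- it uses the closure of the graph --- and has no counterpart in your purely local reduction. Your description of the remaining local checks (that \emph{zip}/\emph{unzip}, bubble/unbubble, the tree isomorphism, and the maps $\phi_0^{\alpha},\phi_1$ carry d.u.r.\ bases to unimodular combinations of d.u.r.\ elements) is in the right spirit and matches \cref{lem:un-bubblepreservesdur}, \cref{lem:id17preservesdur} and \cref{lem:treeisopreservesdur}, but without \cref{prop:gammaklisdur} the induction has no base beyond $\Theta_1$.

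Two smaller points. First, your opening reduction to \emph{linear independence} is not valid over $R=\Z$: $n$ independent elements of a free rank-$n$ module need not generate it, and the theorem is asserted over $\Z$. (Your later, stronger requirement that all change-of-basis matrices have unit determinant is the correct criterion, and is exactly what the paper verifies; but it supersedes rather than follows from the independence claim.) Second, in the dumbbell computation the $\infty$-evaluation of the graph decorated by $x_1$ on the upper-right edge is $x_2/(x_1-x_2)+x_1/(x_2-x_1)=-1$, not $x_1+x_2$; the stated value is inconsistent with your own degree count, though the conclusion $(\gamma_0,\gamma_1)_1=\pm 1$ is correct.
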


A vinyl graph with the property that the d.u.r.\ set is a basis will be called a \emph{d.u.r.\ graph}.
From now on, let $R=\Z$. We will prove the theorem in this setting, as the results for $R=\Q$ and $R=\mathbb{F}_p$, with $p$ prime, follow immediately. The proof will be divided into 2 parts:
\begin{enumerate}
    \item In \cref{sec:basisgammakl} we show by induction over $\ell \geq 0$ that the vinyl graph $\Theta_{\ell}$, given by the closure of the graph represented in \cref{fig:basisthetal}, is d.u.r.
    \item In \cref{sec:basisgeneral} we deduce from this and \cref{lem:moveselementary} that all elementary graphs are d.u.r.
\end{enumerate}

\begin{remark} \label{rem:unionstrandsisdur}
The vinyl graphs $\Gamma$ consisting of the closure of vertical strands (without any trivalent vertex) are clearly d.u.r.: by \cref{eq:degreemaxdecoration}, the only (up to taking multiples in $R$) non-zero decorated graph in $\mathscr{S}_1(\Gamma)$ is the trivially decorated one, which also constitutes its d.u.r.\ set. 
\end{remark}

\begin{lem} \label{lem:un-bubblepreservesdur}
A vinyl graph containing the tangle $\raisebox{-0.65\baselineskip}{\def\svgwidth{26pt}}$ is d.u.r.\ if and only if the graph obtained by replacing $\raisebox{-0.65\baselineskip}{\def\svgwidth{26pt}}$ with $\;\;\raisebox{-0.65\baselineskip}{\def\svgwidth{24pt}}$ is d.u.r.
\end{lem}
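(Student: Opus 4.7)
The plan is to reduce the lemma to Lemma~\ref{lem:un-bubble}, which provides an isomorphism of graded $\Z$-modules $\mathscr{S}_1(\Gamma)\simeq[k+1]\,\mathscr{S}_1(\Gamma')$ realized by the $\textit{bubble}_\alpha$ maps; throughout, $\Gamma$ denotes the graph containing the bubble tangle and $\Gamma'$ the one with the straight strand of label $k+1$ in its place. As a preliminary check on cardinalities, the only new split vertex in $\Gamma$ is the one at the bottom of the bubble, with upper-right edge labeled $1$ and upper-left labeled $k$; it contributes $|T(1,k)|=k+1$ Schur polynomials $x_1^\alpha$ for $0\le\alpha\le k$ to the d.u.r.\ set, while all other split vertices of $\Gamma$ are in natural label-preserving bijection with those of $\Gamma'$. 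Hence $\textit{DUR}(\Gamma)$ is naturally parametrised by $\textit{DUR}(\Gamma')\times\{0,\ldots,k\}$, giving $|\textit{DUR}(\Gamma)|=(k+1)\,|\textit{DUR}(\Gamma')|$.

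The goal is then to show that under the isomorphism of Lemma~\ref{lem:un-bubble}, the disjoint union $\bigsqcup_{\alpha=0}^{k}\textit{bubble}_\alpha(\textit{DUR}(\Gamma'))$ and $\textit{DUR}(\Gamma)$ differ by an invertible $\Z$-linear change of basis, from which it follows that the two sets are simultaneously bases of $\mathscr{S}_1(\Gamma)$ and $[k+1]\mathscr{S}_1(\Gamma')$ respectively. I would split the argument into two cases according to whether the bubbled strand of $\Gamma'$ is the upper-right edge $e^r(v)$ of some split vertex $v$ of $\Gamma'$. In the easy case, where the bubbled strand is not such an edge, a d.u.r.\ decoration of $\Gamma'$ places nothing on it, and equation~\eqref{eq:bubblemap} gives immediately $\textit{bubble}_\alpha\bigl(\gamma_{\lambda_1,\ldots,\lambda_n}(\Gamma')\bigr)=\gamma_{\lambda_1,\ldots,\lambda_n,x_1^\alpha}(\Gamma)$, so the two sets in fact coincide on the nose and the conclusion is automatic.

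The harder case is when the bubbled strand is $e^r(v_j)$ for some split vertex $v_j$ of $\Gamma'$, carrying a d.u.r.\ decoration $s_{\lambda_j}$. Then $\textit{bubble}_\alpha$ displaces $s_{\lambda_j}$ onto the $(k{+}1)$-edge above the merge vertex of the bubble, producing a non-d.u.r.\ graph. I would apply dot migration (Lemma~\ref{lem:dotmigration}) at this merge vertex to expand $s_{\lambda_j}=\sum_{\mu\subseteq\lambda_j} s_\mu\otimes x_1^{|\lambda_j|-|\mu|}$, summed over $\mu$ with $\lambda_j/\mu$ a vertical strip (Pieri's rule applies since the right edge has label $1$), combine with the $x_1^\alpha$ already present on the right edge, and then use dot migration at the split vertex of the bubble to rewrite the configuration with decorations sitting only on $e^r(v_j)$ in $\Gamma$ and on the $1$-labeled edge of the bubble, i.e.\ as a $\Z$-linear combination of d.u.r.\ decorated graphs of $\Gamma$. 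The main obstacle is verifying that the resulting change-of-basis matrix is invertible over $\Z$; I expect this to follow from the $0/1$ nature of the Pieri coefficients together with a unitriangular structure coming from the grading identity $|\lambda'_j|+\beta=|\lambda_j|+\alpha$ and suitable inclusion relations among the Young diagrams involved.
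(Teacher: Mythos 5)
Your proposal rests on the same pillar as the paper's proof, namely the isomorphism of \cref{lem:un-bubble}, and your ``easy case'' is exactly the paper's argument: $\textit{bubble}_\alpha$ sends a trivially decorated $(k+1)$-strand to the digon carrying $x_1^{\alpha}$ on its $1$-labelled edge, so the $k+1$ bubble maps carry the $(k+1)$ copies of $\textit{DUR}(\Gamma')$ bijectively onto $\textit{DUR}(\Gamma)$. The packaging differs slightly: you deduce both implications at once from this single change-of-basis statement, whereas the paper proves the converse separately by noting that $\textit{unbubble}=\textit{unbubble}_0$ is surjective (it has $\textit{bubble}_k$ as a section) and sends $\gamma_k$ to $\gamma$ and $\gamma_i$ to $0$ for $i<k$, so that $\gamma$ generates and hence is a basis. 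Both routes are valid.

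Where you go beyond the paper is your ``hard case'', in which the bubbled strand is $e^r(v_j)$ for a split vertex $v_j$ of $\Gamma'$, so that $\textit{bubble}_\alpha$ deposits $s_{\lambda_j}$ on the edge \emph{above} the digon rather than on the upper-right edge of $v_j$ in $\Gamma$. You are right that the paper's claim that the bubble maps send the d.u.r.\ set to the d.u.r.\ set is not literally true there, and your dot-migration/Pieri plan is the natural fix; but as written it is a sketch, since the unitriangularity and $\Z$-invertibility of the resulting change of basis is asserted rather than proved. Two remarks put this in perspective. First, this case never occurs where the lemma is used: in \cref{prop:gammaklisdur} the bubbled strand is a closed circle, and in the proof of \cref{thm:d.u.r.} the graphs are elementary, so the $2$-labelled bubbled edge cannot be the upper-right edge of a split vertex (that would force a label $\geq 3$ below it). Second, the converse direction is immune to the difficulty even in general: in $\Gamma$ the decoration $s_{\lambda_j}$ sits on the portion of the edge \emph{below} the digon, outside the tangle on which $\textit{unbubble}_\alpha$ acts, so it passes through unchanged and lands on $e^r(v_j)$ of $\Gamma'$; with $p_k=1$ and $p_1=x^i$ the polynomial $\widetilde{p}_\alpha$ reduces to $\pm h_{i+\alpha-k}$, giving an anti-triangular matrix with units on the anti-diagonal. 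Running your argument through the unbubble maps rather than the bubble maps would therefore close your hard case with essentially no extra work.
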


\begin{proof}
The two graphs are respectively the LHS and RHS of identity \cref{eq:un-bubble}. It is easy to check that the isomorphism $\left(\textit{bubble}_{k} \quad \cdots \quad \textit{bubble}_{0} \right)$ realising identity \cref{eq:un-bubble} maps the d.u.r.\ set to the d.u.r.\ set. This implies that if $\;\;\raisebox{-0.65\baselineskip}{\def\svgwidth{24pt}}$ is d.u.r., then so is $\raisebox{-0.65\baselineskip}{\def\svgwidth{26pt}}$. 
Conversely, let $\raisebox{-0.65\baselineskip}{\def\svgwidth{26pt}}$ be d.u.r., with d.u.r.\ set locally given by $\biggl\{ \gamma_0(\;\;\raisebox{-0.65\baselineskip}{\def\svgwidth{26pt}}\;),\ldots,\gamma_k(\;\;\raisebox{-0.65\baselineskip}{\def\svgwidth{26pt}}\;) \biggr\}$. The map \emph{unbubble} is surjective and maps $\gamma_k(\;\;\raisebox{-0.65\baselineskip}{\def\svgwidth{26pt}}\;)$ to $\gamma(\;\;\raisebox{-0.65\baselineskip}{\def\svgwidth{24pt}}\;)$ and $\gamma_i(\;\;\raisebox{-0.65\baselineskip}{\def\svgwidth{26pt}}\;)$ to $0$ for all $i<k$. Therefore $\gamma(\;\;\raisebox{-0.65\baselineskip}{\def\svgwidth{24pt}}\;)$, which constitutes (locally) the d.u.r.\ set of $\;\;\raisebox{-0.65\baselineskip}{\def\svgwidth{24pt}}$, must be a basis.
\end{proof}

\section{Basis for $\Gamma_{k,\ell}$} \label{sec:basisgammakl}

In this section we aim at finding a basis over the integers of the symmetric $\mathfrak{gl}_1$-state spaces $\mathscr{S}_1(\Theta_\ell)$, for the vinyl graphs $\Theta_{\ell}$ on the LHS of \cref{fig:basisthetal} (with $\ell \in \N$). 

\begin{equation} \label{fig:basisthetal}
{\scriptsize \raisebox{-7\baselineskip}{\def\svgwidth{300pt}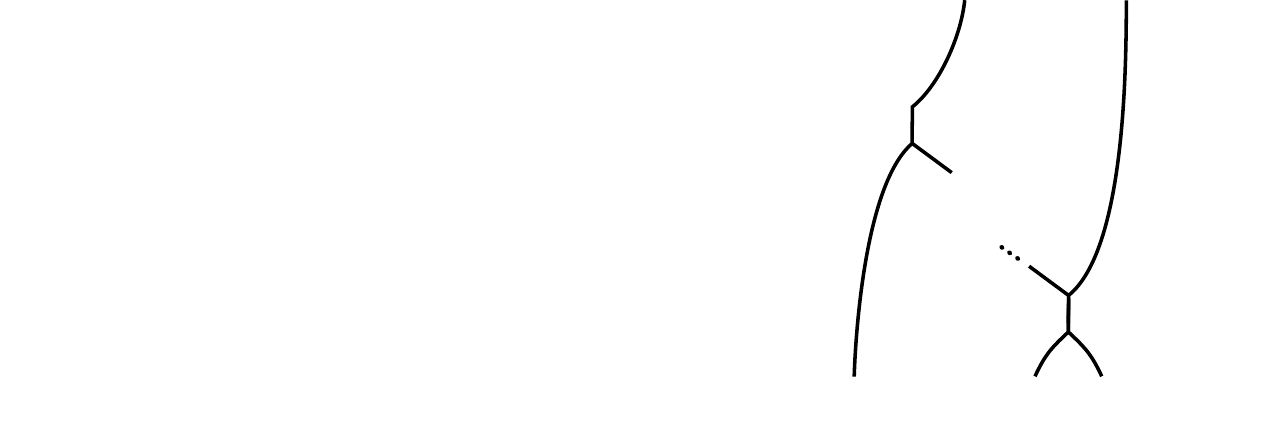}}
\end{equation}

We will actually prove a slightly stronger result:

\begin{prop} \label{prop:gammaklisdur}
Consider the vinyl graph $\Gamma_{k,\ell}$ on the LHS of \cref{fig:carreequalsgammakl}, for all $k,\ell\in \N$. The d.u.r.\ set is a basis for the graded $\Z$-module $\mathscr{S}_1(\Gamma_{k,\ell})$. 
\end{prop}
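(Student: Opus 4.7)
The plan is to proceed by induction on $\ell$, with $k$ allowed to vary freely at every step. For the base case $\ell=0$, the graph $\Gamma_{k,0}$ has no split vertex inside the distinguished square of \cref{fig:carreequalsgammakl}, so it can be reduced, via the bubble/unbubble isomorphism of \cref{lem:un-bubble}, to a closure of disjoint vertical strands. By \cref{rem:unionstrandsisdur} such closures are d.u.r., and by \cref{lem:un-bubblepreservesdur} this property is preserved under the reduction, so $\Gamma_{k,0}$ is d.u.r.

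For the inductive step, assume $\Gamma_{k',\ell'}$ is d.u.r.\ for every $k'\in\N$ and every $\ell'<\ell$. The key tool is \cref{lem:id17general}: applying the square-to-sum isomorphism at the distinguished tangle of $\Gamma_{k,\ell}$ yields
\[
\mathscr{S}_1(\Gamma_{k,\ell}) \;\simeq\; [k]\,\mathscr{S}_1(\Gamma_{k,\ell-1}) \;\oplus\; \mathscr{S}_1(\Gamma_{k+1,\ell-1}),
\]
where both summands are d.u.r.\ by the inductive hypothesis. Concatenating their d.u.r.\ bases produces, via the inverse isomorphism $\phi=(\phi_0^0,\ldots,\phi_0^{k-1},\phi_1)$, a distinguished generating family $\mathcal{B}$ of $\mathscr{S}_1(\Gamma_{k,\ell})$, whose cardinality matches $|\textit{DUR}(\Gamma_{k,\ell})|$ since the ranks agree. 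It remains to compare $\mathcal{B}$ to $\textit{DUR}(\Gamma_{k,\ell})$.

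To do the comparison, I would expand each element of $\mathcal{B}$ in terms of d.u.r.\ decorated graphs of $\Gamma_{k,\ell}$, using the movies \cref{eq:moviephi0alpha} and \cref{eq:moviephi1} defining $\phi_0^\alpha$ and $\phi_1$. Tracking the effect of \emph{bubble}, \emph{zip}, \emph{tree iso.} and the added decoration $e_\alpha$ through the movies, then applying dot migration (\cref{lem:dotmigration}) to move all resulting decorations to upper-right edges of split vertices, one obtains explicit expansions of the elements of $\mathcal{B}$ in the basis $\textit{DUR}(\Gamma_{k,\ell})$. A grading argument, based on \cref{def:degdecoratedgraph} and \cref{eq:degreemaxdecoration}, groups the d.u.r.\ elements by degree and shows that the expansion matrix is (block-)triangular with respect to a suitable filtration indexed by the degree of the decoration placed on the new split vertex created by the inductive step.

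The main obstacle I anticipate is precisely checking that the diagonal of this triangular transition matrix consists of units in $\Z$ (i.e.\ $\pm 1$), so that $\mathcal{B}=\textit{DUR}(\Gamma_{k,\ell})$ up to a unimodular change of basis, which is what is needed to conclude over $\Z$. This reduces to a direct computation: on the top-degree d.u.r.\ generator in each filtration piece, the map $\phi_0^\alpha$ contributes (after dot migration) a single Schur polynomial $s_{\lambda}$ with $\lambda\in T(1,k)$ on the new upper-right edge, and $\phi_1$ contributes the Schur polynomial $s_{\lambda}$ with $\lambda\in T(k,1)$. Both correspondences are bijections onto the two types of Schur decorations allowed at the newly created split vertex, and carry coefficient $\pm 1$. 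Once this is verified, the transition matrix is unimodular, $\textit{DUR}(\Gamma_{k,\ell})$ is a $\Z$-basis of $\mathscr{S}_1(\Gamma_{k,\ell})$, and the induction is complete.
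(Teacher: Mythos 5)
Your base case agrees with the paper, but the inductive step contains a genuine error: the decomposition you apply is not the one that \cref{lem:id17general} produces. Replacing the square tangle of $\Gamma_{k,\ell}^{\text{sq}}$ by two parallel strands labelled $k+1$ and $1$ disconnects the $(k+1)$-labelled strand from the rest of the graph, so the first summand is $\mathscr{S}_1(\mathbb{S}_{k+1}\sqcup\Theta_{\ell-1})$ (a $(k+1)$-labelled circle disjoint from a theta-graph), \emph{not} $\mathscr{S}_1(\Gamma_{k,\ell-1})$. A graded-rank check exposes the problem: $\llangle\Gamma_{k,\ell}\rrangle_1=[k+1][2]^{\ell}$, whereas your right-hand side has graded rank $[2]^{\ell-1}\bigl([k][k+1]+[k+2]\bigr)$, which equals $[2]^{\ell-1}\bigl([k]+[k+2]\bigr)=[k+1][2]^{\ell}$ only when $k=0$. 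With the correct first summand the count is $[k][2]^{\ell-1}+[k+2][2]^{\ell-1}=[k+1][2]^{\ell}$, as needed.

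Even after correcting the decomposition, the argument you sketch underestimates where the difficulty lies. Pushing the d.u.r.\ bases of the two summands through $\phi_0^{\alpha}$ and $\phi_1$ produces decorated graphs of the form $\gamma_{s,r+1,\theta_{\mathbf{a}}}$ carrying a decoration $x^{r+1}$ with $r+1>1$ on a $1$-labelled upper-right edge whose left companion is also $1$-labelled; such a decoration lies outside $T(1,1)$ and cannot be brought into the d.u.r.\ set by dot migration and a grading filtration alone. The paper resolves this with \cref{lem:rewritingdots}: it realises $\gamma_{s,r+1,\theta_{\mathbf{a}}}-\gamma_{s+1,r,\theta_{\mathbf{a}}}$ as the image under \emph{zip} of an element of $\mathscr{S}_1(\,\cdot\sqcup\Theta_{\ell}\,)$, expands that element integrally using the inductive hypothesis that $\Theta_{\ell}$ is d.u.r.\ together with \cref{lem:monoidal}, and kills the out-of-range terms via the degree bound \cref{eq:degreemaxdecoration}. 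This rewriting step is where the induction hypothesis does its real work and what guarantees that the final change of basis is integral and unimodular; your proposal uses the hypothesis only to assert that the summands are d.u.r., which is not enough to produce the required expansions over $\Z$.
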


\begin{remark}
\begin{enumerate}
    \item All vinyl graphs in this section are closed, however they will be represented as open vinyl graphs for simplicity.
    \item The index $\ell$ in $\Theta_{\ell}$ and $\Gamma_{k,\ell}$ represents the number of split vertices of type ${\tiny \raisebox{-0.7\baselineskip}{\def\svgwidth{20pt}
\begingroup%
  \makeatletter%
  \providecommand\color[2][]{%
    \errmessage{(Inkscape) Color is used for the text in Inkscape, but the package 'color.sty' is not loaded}%
    \renewcommand\color[2][]{}%
  }%
  \providecommand\transparent[1]{%
    \errmessage{(Inkscape) Transparency is used (non-zero) for the text in Inkscape, but the package 'transparent.sty' is not loaded}%
    \renewcommand\transparent[1]{}%
  }%
  \providecommand\rotatebox[2]{#2}%
  \newcommand*\fsize{\dimexpr\f@size pt\relax}%
  \newcommand*\lineheight[1]{\fontsize{\fsize}{#1\fsize}\selectfont}%
  \ifx\svgwidth\undefined%
    \setlength{\unitlength}{30.20477935bp}%
    \ifx\svgscale\undefined%
      \relax%
    \else%
      \setlength{\unitlength}{\unitlength * \real{\svgscale}}%
    \fi%
  \else%
    \setlength{\unitlength}{\svgwidth}%
  \fi%
  \global\let\svgwidth\undefined%
  \global\let\svgscale\undefined%
  \makeatother%
  \begin{picture}(1,0.88593063)%
    \lineheight{1}%
    \setlength\tabcolsep{0pt}%
    \put(0,0){\includegraphics[width=\unitlength,page=1]{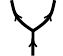}}%
    \put(-0.01648901,0.55929025){\color[rgb]{0,0,0}\makebox(0,0)[lt]{\lineheight{1.25}\smash{\begin{tabular}[t]{l}$1$\end{tabular}}}}%
    \put(0.52133101,0.68341105){\color[rgb]{0,0,0}\makebox(0,0)[lt]{\lineheight{1.25}\smash{\begin{tabular}[t]{l}$1$\end{tabular}}}}%
    \put(0.63733881,0.10337565){\color[rgb]{0,0,0}\makebox(0,0)[lt]{\lineheight{1.25}\smash{\begin{tabular}[t]{l}$2$\end{tabular}}}}%
  \end{picture}%
\endgroup%
}}$. Observe that $\Theta_{\ell}=\Gamma_{0,\ell}$, so \cref{prop:gammaklisdur} implies that $\Theta_{\ell}$ is d.u.r. 
\end{enumerate}
\end{remark}

\begin{remark} \label{rem:carreequalsgammakl}
We notice that $\Gamma_{k,\ell}$ is equal to the vinyl graph $\Gamma_{k,\ell}^{\textnormal{sq}}$ on the right-hand side of \cref{fig:carreequalsgammakl}: one simply has to slide along the annulus the portion of $\Gamma_{k,\ell}$ that is above the dotted line, until it reaches the bottom of the graph (remember that $\Gamma_{k,l}$ is closed).
\begin{equation} \label{fig:carreequalsgammakl}
{\scriptsize \raisebox{-6.75\baselineskip}{\def\svgwidth{316pt}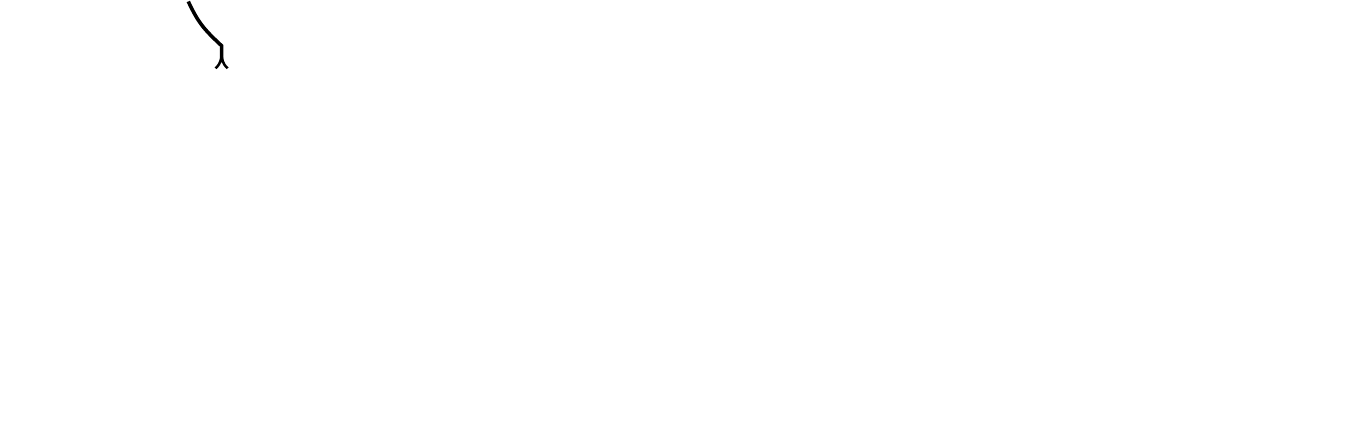}}
\end{equation}

\end{remark}

Let us examine in more detail the d.u.r.\ sets of $\Theta_{\ell}$ and $\Gamma_{k,\ell}$. $\textit{DUR}(\Theta_{\ell})$ consists of the decorated graphs $\theta_{\mathbf{a}}=\gamma_{a_1,\ldots,a_{\ell}}(\Theta_{\ell})$ (see \cref{fig:basisthetal}), with decorations determined by $\mathbf{a}=(a_1,\ldots,a_{\ell})\in \{0,1\}^{\ell}$. The degree of their decoration is $|\mathbf{a}|=\sum_{i=1}^{\ell} a_i$.

As for $\textit{DUR}(\Gamma_{k,\ell})$, it is made of the decorated graphs $\gamma_{i,\theta_{\mathbf{a}}}(\Gamma_{k,\ell})$ of \cref{fig:basisgammakl}, for $0 \leq i \leq k$ and $\mathbf{a} \in \{0,1\}^\ell$. By \cref{rem:carreequalsgammakl} these coincide with the decorated graphs $\gamma_{i,j,\theta_{\widetilde{\mathbf{a}}}}(\Gamma_{k,\ell}^{\textnormal{sq}})$, for $0 \leq i \leq k, \; 0 \leq j \leq 1 $ and $\widetilde{\mathbf{a}} \in \{0,1\}^{\ell-1}$, constituting  $\textit{DUR}(\Gamma_{k,\ell}^{\textnormal{sq}})$.
\begin{equation} \label{fig:basisgammakl}
{\scriptsize \raisebox{-7\baselineskip}{\def\svgwidth{370pt}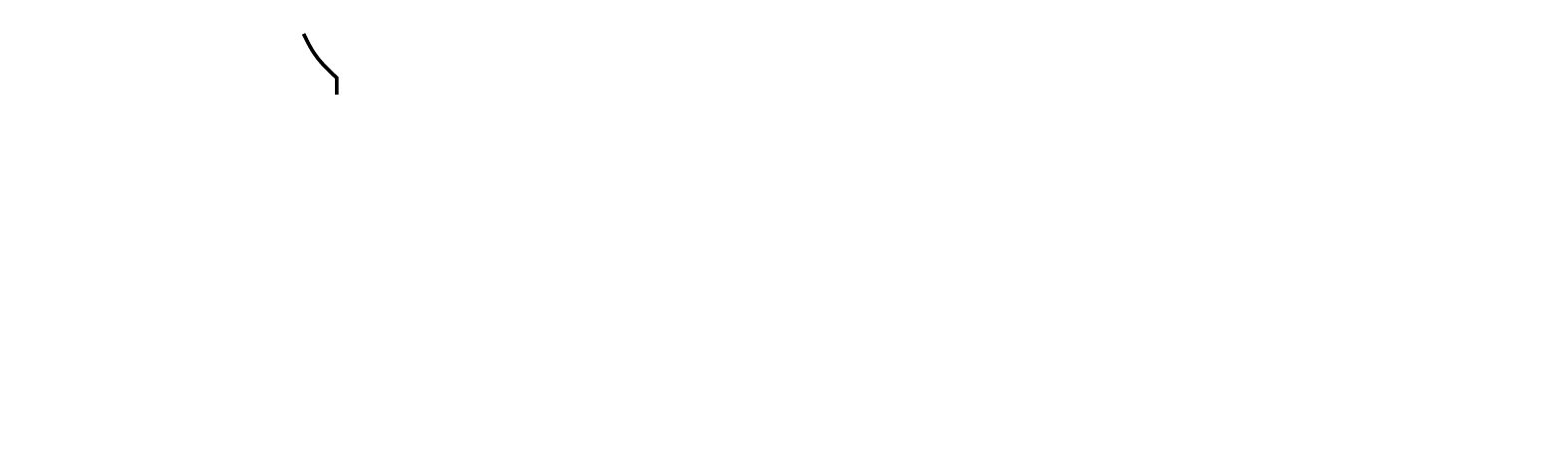}}
\end{equation}

The following result will allow us to express elements of $\overline{\textit{DUR}}(\Gamma_{k,\ell}^{\text{sq}})$ as linear combinations of the d.u.r.\ decorated graphs.
\begin{lem} \label{lem:rewritingdots}
Suppose that the graph $\Theta_{\ell}$ is d.u.r.\ for some $\ell$ and consider $\Gamma_{k,\ell+1}$. Let $r,s \in \N$ and $\mathbf{a} \in \{0,1\}^{\ell}$, and write $\gamma_{r,s,\theta_\mathbf{a}}=\gamma_{r,s,\theta_\mathbf{a}}(\Gamma_{k,\ell+1}^{\textnormal{sq}})$.
\begin{enumerate}
    \item We have 
    $$\gamma_{s,r+1,\theta_\mathbf{a}}-\gamma_{s+1,r,\theta_\mathbf{a}}=\sum_{|\widetilde{\mathbf{a}}|=|\mathbf{a}|+r} b_{\widetilde{\mathbf{a}}}(\gamma_{s,1,\theta_{\widetilde{\mathbf{a}}}}-\gamma_{s+1,0,\theta_{\widetilde{\mathbf{a}}}}),$$ 
    for $b_{\widetilde{\mathbf{a}}} \in \Z$ such that $b_{\widetilde{\mathbf{a}}}=0$ if $|\widetilde{\mathbf{a}}| >\ell$.
    
    \item As a consequence 
    $$\gamma_{0,r+1,\theta_\mathbf{a}} = \gamma_{r,1,\theta_{\mathbf{a}}} + \sum_{t=1}^m \sum_{|\widetilde{\mathbf{a}}|=|\mathbf{a}|+t} b_{\widetilde{\mathbf{a}}}(\gamma_{r-t,1,\theta_{\widetilde{\mathbf{a}}}}-\gamma_{r-t+1,0,\theta_{\widetilde{\mathbf{a}}}}),$$ 
    where $m=\min \{r,\ell-|\mathbf{a}|\}$.
\end{enumerate}
\end{lem}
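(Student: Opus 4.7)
The plan is to prove part (1) by induction on $r \geq 0$, and then deduce part (2) by iterating part (1). For the base case $r = 0$, part (1) is trivial: the identity $\gamma_{s, 1, \theta_\mathbf{a}} - \gamma_{s+1, 0, \theta_\mathbf{a}} = \sum_{|\widetilde{\mathbf{a}}|=|\mathbf{a}|} b_{\widetilde{\mathbf{a}}} \bigl( \gamma_{s, 1, \theta_{\widetilde{\mathbf{a}}}} - \gamma_{s+1, 0, \theta_{\widetilde{\mathbf{a}}}} \bigr)$ holds with $b_\mathbf{a} = 1$ and $b_{\widetilde{\mathbf{a}}} = 0$ otherwise.

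For the inductive step of part (1), I would apply dot migration (\cref{lem:dotmigration}) at the split vertex whose upper-right label-$1$ edge carries the decoration $x^{r+1}$ (or $x^r$). Writing $x$ and $y$ for the variables on the two marked label-$1$ edges, one factors
\[
x^{s} y^{r+1} - x^{s+1} y^{r} = x^{s} y^{r} (y - x).
\]
The idea is to use dot migration together with the tree isomorphism (\cref{eq:treeiso}) and the isomorphism of \cref{lem:id17general} to transport the monomial $x^s y^r$ together with the asymmetric factor $(y-x)$ through the graph structure of $\Gamma_{k,\ell+1}^{\text{sq}}$, depositing $r$ units of decoration onto the $\Theta_\ell$ subgraph and leaving only the decorations $(s, 1)$ or $(s+1, 0)$ on the two marked positions. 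Since $\Theta_\ell$ is d.u.r.\ by hypothesis, the newly-introduced decoration is expressible in the basis $\{\theta_{\widetilde{\mathbf{a}}}\}_{\widetilde{\mathbf{a}} \in \{0,1\}^\ell}$; the homogeneity of the $\infty$-evaluation (\cref{eq:degevalgraph}) forces the constraint $|\widetilde{\mathbf{a}}| = |\mathbf{a}| + r$, and $b_{\widetilde{\mathbf{a}}} = 0$ whenever $|\widetilde{\mathbf{a}}| > \ell$ as no such tuple exists in $\{0,1\}^{\ell}$.

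For part (2), I would iterate part (1). At step $i \in \{1, \ldots, r\}$, writing $\gamma_{i-1,\,r-i+2,\,\theta_\mathbf{a}} = \gamma_{i-1,\,(r-i+1)+1,\,\theta_\mathbf{a}}$ and applying part (1) with $s = i - 1$ gives
\[
\gamma_{i-1,\,r-i+2,\,\theta_\mathbf{a}} = \gamma_{i,\,r-i+1,\,\theta_\mathbf{a}} + \sum_{|\widetilde{\mathbf{a}}|=|\mathbf{a}|+(r-i+1)} b_{\widetilde{\mathbf{a}}}^{(i)} \bigl( \gamma_{i-1,\,1,\,\theta_{\widetilde{\mathbf{a}}}} - \gamma_{i,\,0,\,\theta_{\widetilde{\mathbf{a}}}} \bigr).
\]
Telescoping these identities collapses $\gamma_{0,r+1,\theta_\mathbf{a}} \to \gamma_{1,r,\theta_\mathbf{a}} \to \cdots \to \gamma_{r,1,\theta_\mathbf{a}}$ and accumulates the correction terms; after reindexing $t = r - i + 1$, the claimed formula falls out. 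The bound $m = \min\{r, \ell - |\mathbf{a}|\}$ is automatic because each correction term with $t > \ell - |\mathbf{a}|$ already has vanishing $b$-coefficients by part (1).

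The main obstacle is the inductive step of part (1). The asymmetric factor $(y - x)$ cannot be migrated as a decoration across a single trivalent vertex, so the argument necessarily involves moving decoration across several vertices while tracking Littlewood--Richardson coefficients, or alternatively reducing the statement to a polynomial identity via the explicit pairing formula $\langle \Gamma^{d \cdot e} \rangle_1$ against every d.u.r.\ element $\Gamma^e$. In either approach, the assumption that $\Theta_\ell$ is d.u.r.\ is crucial: it is what guarantees that the transferred decoration, once it reaches the $\Theta_\ell$ subgraph, admits an expansion in the $\theta_{\widetilde{\mathbf{a}}}$ basis with integer coefficients, producing the required $b_{\widetilde{\mathbf{a}}} \in \Z$.
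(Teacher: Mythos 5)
There is a genuine gap in your proof of part (1): the inductive step is never actually carried out. You correctly isolate the difficulty — the factor $x^s y^{r+1}-x^{s+1}y^r = x^sy^r(y-x)$ contains the antisymmetric term $(y-x)$, which is not a symmetric polynomial and therefore cannot be placed as a decoration or migrated across a vertex via \cref{lem:dotmigration} — but the proposed remedy (``transport the monomial together with the asymmetric factor through the graph structure'' using the tree isomorphism and \cref{lem:id17general}) is a description of intent, not an argument. Nothing in the proposal explains how the factor $(y-x)$ ends up depositing exactly $r$ units of symmetric decoration on the $\Theta_\ell$ subgraph, and the induction on $r$ never gets past its base case.

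The missing idea, which makes part (1) a one-step argument with no induction at all, is to recognize the left-hand side as the image of the \emph{zip} map: by \cref{eq:zip} (in the elementary case $a=b=1$, i.e.\ \cref{eq:zipelementary}), zipping the two $1$-labelled strands carrying decorations $x^s$ and $x^r$ produces precisely $\gamma_{s,r+1,\theta_\mathbf{a}}-\gamma_{s+1,r,\theta_\mathbf{a}}$ — the factor $(y-x)$ you are trying to migrate is exactly what \emph{zip} creates, so one should run the computation backwards through \emph{zip} rather than forwards through dot migration. The preimage $f_{s,r,\mathbf{a}}$ lives on the \emph{unzipped} graph, which is a disjoint union of a bigon-type graph and $\Theta_\ell$, with the $x^r$ decoration sitting on the $\Theta_\ell$ component. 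By \cref{lem:monoidal} and the hypothesis that $\Theta_\ell$ is d.u.r., that component expands with integer coefficients $b_{\widetilde{\mathbf{a}}}$ in the basis $\{\theta_{\widetilde{\mathbf{a}}}\}$, with $|\widetilde{\mathbf{a}}|=|\mathbf{a}|+r$ forced by homogeneity and $b_{\widetilde{\mathbf{a}}}=0$ for $|\widetilde{\mathbf{a}}|>\ell$ by \cref{eq:degreemaxdecoration}; applying \emph{zip} to this expansion, by linearity, yields the claimed identity. Your treatment of part (2) — telescoping part (1) and truncating the sum at $m=\min\{r,\ell-|\mathbf{a}|\}$ — is correct and matches the paper, but it rests entirely on the unproven part (1).
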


\begin{proof}
We start by noticing, as shown in \cref{fig:zipgammakl}, that 
$$\gamma_{s,r+1,\theta_\mathbf{a}}-\gamma_{s+1,r,\theta_\mathbf{a}}=\textit{zip}(f_{s,r,\mathbf{a}}).$$ 
\begin{equation} \label{fig:zipgammakl}
{\scriptsize \raisebox{-7\baselineskip}{\def\svgwidth{370pt}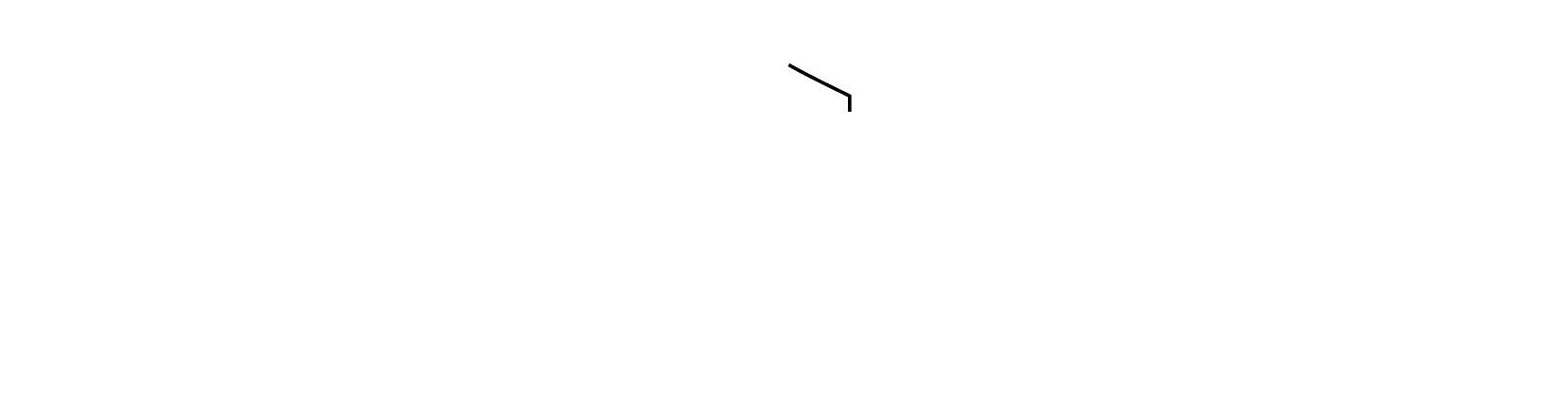}}
\end{equation}

The decorated graph $f_{s,r,\mathbf{a}}$ has shape $\raisebox{-0.65\baselineskip}{\def\svgwidth{25pt}
\begingroup%
  \makeatletter%
  \providecommand\color[2][]{%
    \errmessage{(Inkscape) Color is used for the text in Inkscape, but the package 'color.sty' is not loaded}%
    \renewcommand\color[2][]{}%
  }%
  \providecommand\transparent[1]{%
    \errmessage{(Inkscape) Transparency is used (non-zero) for the text in Inkscape, but the package 'transparent.sty' is not loaded}%
    \renewcommand\transparent[1]{}%
  }%
  \providecommand\rotatebox[2]{#2}%
  \newcommand*\fsize{\dimexpr\f@size pt\relax}%
  \newcommand*\lineheight[1]{\fontsize{\fsize}{#1\fsize}\selectfont}%
  \ifx\svgwidth\undefined%
    \setlength{\unitlength}{106.93405798bp}%
    \ifx\svgscale\undefined%
      \relax%
    \else%
      \setlength{\unitlength}{\unitlength * \real{\svgscale}}%
    \fi%
  \else%
    \setlength{\unitlength}{\svgwidth}%
  \fi%
  \global\let\svgwidth\undefined%
  \global\let\svgscale\undefined%
  \makeatother%
  \begin{picture}(1,0.89745272)%
    \lineheight{1}%
    \setlength\tabcolsep{0pt}%
    \put(0.49828874,0.08545245){\makebox(0,0)[lt]{\lineheight{1.25}\smash{\begin{tabular}[t]{l}\tiny $k+1$\end{tabular}}}}%
    \put(0.51480024,0.74820292){\makebox(0,0)[lt]{\lineheight{1.25}\smash{\begin{tabular}[t]{l}\tiny $k+1$\end{tabular}}}}%
    \put(0.67262153,0.42099247){\makebox(0,0)[lt]{\lineheight{1.25}\smash{\begin{tabular}[t]{l}\tiny $1$\end{tabular}}}}%
    \put(0,0.42448297){\makebox(0,0)[lt]{\lineheight{1.25}\smash{\begin{tabular}[t]{l}\tiny $k$\end{tabular}}}}%
    \put(0,0){\includegraphics[width=\unitlength,page=1]{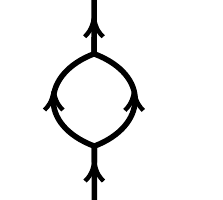}}%
  \end{picture}%
\endgroup%
} \sqcup \Theta_{\ell}$ and, by hypothesis, $\textit{DUR}(\Theta_{\ell}) = \{\theta_\mathbf{a}\}_{\mathbf{a} \in \{0,1\}^{\ell}}$ are a $\Z$-basis of $\mathscr{S}_1(\Theta_{\ell})$. Using \cref{lem:monoidal}, we can therefore express $f_{s,r,\mathbf{a}}$ as 
$$f_{s,r,\mathbf{a}}=\sum_{|\widetilde{\mathbf{a}}|=|\mathbf{a}|+r} b_{\widetilde{\mathbf{a}}} \cdot \gamma_{s,\theta_{\widetilde{\mathbf{a}}}}(\raisebox{-0.65\baselineskip}{\def\svgwidth{25pt}} \sqcup \Theta_{\ell})$$
(see \cref{fig:dotrewriting}), with $b_{\widetilde{\mathbf{a}}} \in \Z$.
\begin{equation} \label{fig:dotrewriting}
{\scriptsize \raisebox{-7\baselineskip}{\def\svgwidth{300pt}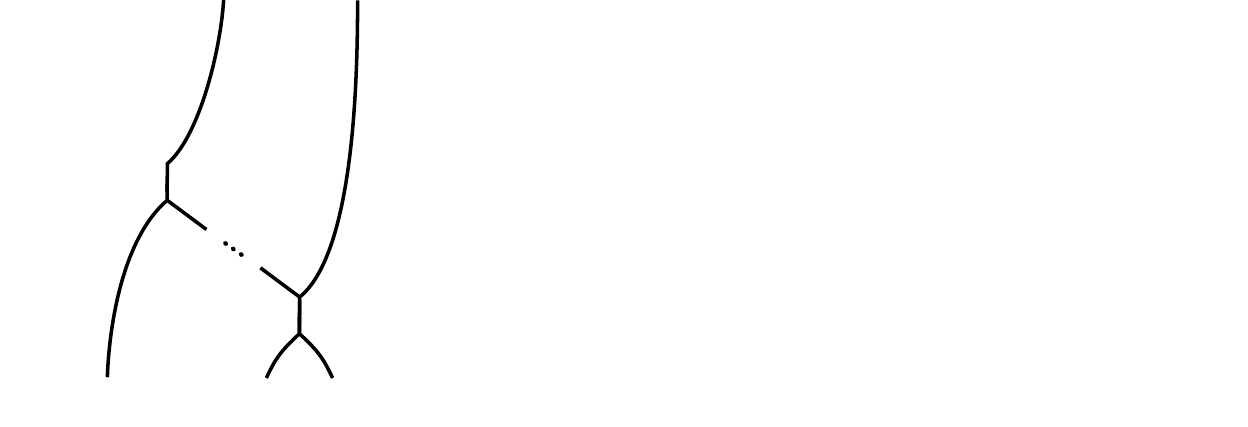}}
\end{equation}

Point (1) of the statement then follows from this:
\[
\textit{zip} \left( \sum_{|\widetilde{\mathbf{a}}|=|\mathbf{a}|+r} b_{\widetilde{\mathbf{a}}} \cdot \gamma_{s,\theta_{\widetilde{\mathbf{a}}}}(\raisebox{-0.65\baselineskip}{\def\svgwidth{25pt}} \sqcup \Theta_{\ell}) \right) = \sum_{|\widetilde{\mathbf{a}}|=|\mathbf{a}|+r} b_{\widetilde{\mathbf{a}}}(\gamma_{s,1,\theta_{\widetilde{\mathbf{a}}}}-\gamma_{s+1,0,\theta_{\widetilde{\mathbf{a}}}}).
\] 
Lastly, we observe the following:
\begin{remark} \label{rem:whenr+jtoobig}
If $|\mathbf{a}|+r >\ell$, by \cref{eq:degreemaxdecoration} (since $\ell=t_{\Theta_\ell}$), the decorated graph $f_{s,r,\mathbf{a}}$ is equal to zero, therefore its image under the zip map is zero as well.
\end{remark}

As for the second part of the statement, we observe that, using point (1) recursively: 
\begin{equation*}
\begin{split}
\gamma_{0,r+1,\theta_\mathbf{a}} & = \gamma_{1,r,\theta_{\mathbf{a}}} + \sum_{|\widetilde{\mathbf{a}}|=|\mathbf{a}|+r} b_{\widetilde{\mathbf{a}}}(\gamma_{0,1,\theta_{\widetilde{\mathbf{a}}}}-\gamma_{1,0,\theta_{\widetilde{\mathbf{a}}}}) = \\
& = \gamma_{2,r-1,\theta_{\mathbf{a}}} + \sum_{|\widetilde{\mathbf{a}}|=|\mathbf{a}|+r-1} b_{\widetilde{\mathbf{a}}}(\gamma_{1,1,\theta_{\widetilde{\mathbf{a}}}}-\gamma_{2,0,\theta_{\widetilde{\mathbf{a}}}}) + \sum_{|\widetilde{\mathbf{a}}|=|\mathbf{a}|+r} b_{\widetilde{\mathbf{a}}}(\gamma_{0,1,\theta_{\widetilde{\mathbf{a}}}}-\gamma_{1,0,\theta_{\widetilde{\mathbf{a}}}}) = \\
& = \ldots = \gamma_{r,1,\theta_\mathbf{a}} + \sum_{t=1}^r \sum_{|\widetilde{\mathbf{a}}|=|\mathbf{a}|+t} b_{\widetilde{\mathbf{a}}}(\gamma_{r-t,1,\theta_{\widetilde{\mathbf{a}}}}-\gamma_{r-t+1,0,\theta_{\widetilde{\mathbf{a}}}}).    
\end{split}
\end{equation*} 

However, by \cref{rem:whenr+jtoobig}, $b_{\widetilde{\mathbf{a}}}=0$ if $|\widetilde{\mathbf{a}}| >\ell$. Thus
\[
\sum_{t=1}^r \sum_{|\widetilde{\mathbf{a}}|=|\mathbf{a}|+t} b_{\widetilde{\mathbf{a}}}(\gamma_{r-t,1,\theta_{\widetilde{\mathbf{a}}}}-\gamma_{r-t+1,0,\theta_{\widetilde{\mathbf{a}}}}) = \sum_{t=1}^m \sum_{|\widetilde{\mathbf{a}}|=|\mathbf{a}|+t} b_{\widetilde{\mathbf{a}}}(\gamma_{r-t,1,\theta_{\widetilde{\mathbf{a}}}}-\gamma_{r-t+1,0,\theta_{\widetilde{\mathbf{a}}}})
\]
where $m=\min  \{r,\ell-|\mathbf{a}|\}$.

\end{proof}

We are now ready to show that the vinyl graphs $\Gamma_{k,l}$ are d.u.r.\ for all $k$ and $l$.

\begin{proof}[Proof of \cref{prop:gammaklisdur}]
We proceed by induction over $\ell$. Let first $\ell=0$ and $k\in \N$. The vinyl graph $\Gamma_{k,0}$ is the closure of $\; \raisebox{-0.65\baselineskip}{\def\svgwidth{25pt}
\begingroup%
  \makeatletter%
  \providecommand\color[2][]{%
    \errmessage{(Inkscape) Color is used for the text in Inkscape, but the package 'color.sty' is not loaded}%
    \renewcommand\color[2][]{}%
  }%
  \providecommand\transparent[1]{%
    \errmessage{(Inkscape) Transparency is used (non-zero) for the text in Inkscape, but the package 'transparent.sty' is not loaded}%
    \renewcommand\transparent[1]{}%
  }%
  \providecommand\rotatebox[2]{#2}%
  \newcommand*\fsize{\dimexpr\f@size pt\relax}%
  \newcommand*\lineheight[1]{\fontsize{\fsize}{#1\fsize}\selectfont}%
  \ifx\svgwidth\undefined%
    \setlength{\unitlength}{85.68269251bp}%
    \ifx\svgscale\undefined%
      \relax%
    \else%
      \setlength{\unitlength}{\unitlength * \real{\svgscale}}%
    \fi%
  \else%
    \setlength{\unitlength}{\svgwidth}%
  \fi%
  \global\let\svgwidth\undefined%
  \global\let\svgscale\undefined%
  \makeatother%
  \begin{picture}(1,0.78804226)%
    \lineheight{1}%
    \setlength\tabcolsep{0pt}%
    \put(0.44526273,0.35965513){\makebox(0,0)[lt]{\lineheight{1.25}\smash{\begin{tabular}[t]{l}\tiny $k+1$\end{tabular}}}}%
    \put(0.54770936,0.61287336){\makebox(0,0)[lt]{\lineheight{1.25}\smash{\begin{tabular}[t]{l}\tiny $1$\end{tabular}}}}%
    \put(0.01157408,0.61599963){\makebox(0,0)[lt]{\lineheight{1.25}\smash{\begin{tabular}[t]{l}\tiny $k$\end{tabular}}}}%
    \put(0,0){\includegraphics[width=\unitlength,page=1]{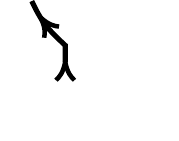}}%
    \put(0.56889486,0.10772896){\makebox(0,0)[lt]{\lineheight{1.25}\smash{\begin{tabular}[t]{l}\tiny $1$\end{tabular}}}}%
    \put(-0.00319129,0.1092921){\makebox(0,0)[lt]{\lineheight{1.25}\smash{\begin{tabular}[t]{l}\tiny $k$\end{tabular}}}}%
    \put(0,0){\includegraphics[width=\unitlength,page=2]{gammak.pdf}}%
  \end{picture}%
\endgroup%
}$, or equivalently, the closure of $\; \raisebox{-0.70\baselineskip}{\def\svgwidth{25pt}}$. Since, by \cref{rem:unionstrandsisdur}, the closure of $\;\;\raisebox{-0.65\baselineskip}{\def\svgwidth{14pt}
\begingroup%
  \makeatletter%
  \providecommand\color[2][]{%
    \errmessage{(Inkscape) Color is used for the text in Inkscape, but the package 'color.sty' is not loaded}%
    \renewcommand\color[2][]{}%
  }%
  \providecommand\transparent[1]{%
    \errmessage{(Inkscape) Transparency is used (non-zero) for the text in Inkscape, but the package 'transparent.sty' is not loaded}%
    \renewcommand\transparent[1]{}%
  }%
  \providecommand\rotatebox[2]{#2}%
  \newcommand*\fsize{\dimexpr\f@size pt\relax}%
  \newcommand*\lineheight[1]{\fontsize{\fsize}{#1\fsize}\selectfont}%
  \ifx\svgwidth\undefined%
    \setlength{\unitlength}{65.66262688bp}%
    \ifx\svgscale\undefined%
      \relax%
    \else%
      \setlength{\unitlength}{\unitlength * \real{\svgscale}}%
    \fi%
  \else%
    \setlength{\unitlength}{\svgwidth}%
  \fi%
  \global\let\svgwidth\undefined%
  \global\let\svgscale\undefined%
  \makeatother%
  \begin{picture}(1,1.4640853)%
    \lineheight{1}%
    \setlength\tabcolsep{0pt}%
    \put(0.29421222,1.00364701){\makebox(0,0)[lt]{\lineheight{1.25}\smash{\begin{tabular}[t]{l}\tiny $k+1$\end{tabular}}}}%
    \put(0,0){\includegraphics[width=\unitlength,page=1]{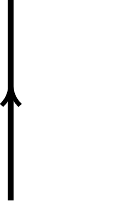}}%
  \end{picture}%
\endgroup%
}\;$ is d.u.r., we conclude that $\Gamma_{k,0}$ is d.u.r.\ as well by \cref{lem:un-bubblepreservesdur}.

We now assume that the statement is true for $\ell>0$ and for all $k\in\N$, i.e.\ that $\textit{DUR}(\Gamma_{k,\ell})$ is a basis for $\mathscr{S}_{1}(\Gamma_{k,\ell})$. Let us show that $\Gamma_{k,\ell+1}$ is d.u.r.\ for all $k \geq 0$. Since $\Gamma_{k,\ell+1}=\Gamma_{k,\ell+1}^{\textnormal{sq}}$ (\cref{rem:carreequalsgammakl}), we will actually show that the decorated graphs $\gamma_{i,j,\theta_\mathbf{a}}(\Gamma_{k,\ell+1}^{\textnormal{sq}})$ are a basis over $\Z$ of $\mathscr{S}_{1}(\Gamma_{k,\ell+1}^{\textnormal{sq}})$, for all $k \geq 0$.

\cref{lem:id17general} yields the isomorphism of \cref{fig:identity17gammakl}: 
\[
\mathscr{S}_{1}(\Gamma_{k,\ell+1})=\mathscr{S}_{1}(\Gamma_{k,\ell+1}^{\textnormal{sq}})\simeq [k] \; \mathscr{S}_{1}(\mathbb{S}_{k+1} \sqcup \Gamma_{0,\ell}) \oplus \mathscr{S}_{1}(\Gamma_{k+1,\ell}).
\]
\begin{equation} \label{fig:identity17gammakl}
{\scriptsize \raisebox{-6\baselineskip}{\def\svgwidth{310pt}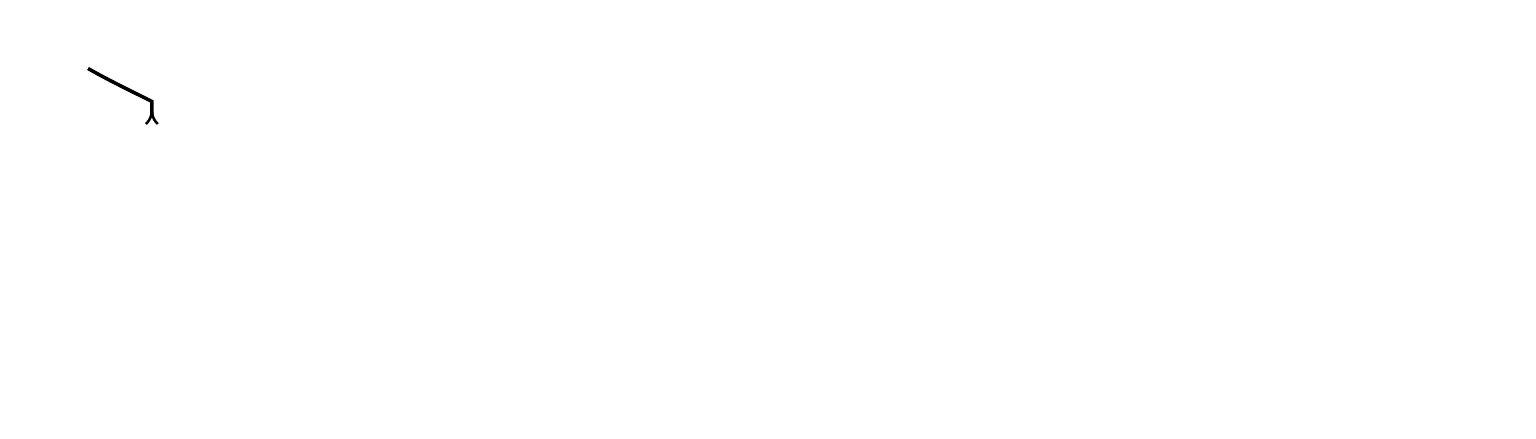}}
\end{equation}

This isomorphism increases $k$, but reduces $\ell$, so by the induction hypothesis and \cref{lem:monoidal}, both vinyl graphs on the RHS are d.u.r., and their d.u.r.\ bases are 
\[
\textit{DUR}(\Gamma_{k+1,\ell}) = \bigl\{ \gamma_{i,\theta_\mathbf{a}}(\Gamma_{k+1,\ell}),\; \text{ for } 0 \leq i \leq k+1, \; \mathbf{a} \in \{0,1\}^{\ell} \bigr\}
\]
and
\[
\textit{DUR}(\mathbb{S}_{k+1} \sqcup \Gamma_{0,\ell}) = \textit{DUR}(\mathbb{S}_{k+1} \sqcup \Theta_{\ell}) = \bigl\{\mathbb{S}_{k+1} \sqcup \theta_\mathbf{a},\; \text{ for } \mathbf{a} \in \{0,1\}^{\ell} \bigr\}.
\]
For simplicity, we will denote $\mathbb{S}_{k+1} \sqcup \theta_\mathbf{a}$ again by $\theta_\mathbf{a}$.

Our strategy will be to apply the isomorphisms $\phi_0^{\alpha}\colon \; \mathscr{S}_{1}(\mathbb{S}_{k+1} \sqcup \Gamma_{0,\ell}) \longrightarrow \mathscr{S}_{1}(\Gamma_{k,\ell+1}^{\textnormal{sq}})$ and $\phi_1 \colon\; \mathscr{S}_{1}(\Gamma_{k+1,\ell}) \longrightarrow \mathscr{S}_{1}(\Gamma_{k,\ell+1}^{\textnormal{sq}})$, realising the identity of \cref{lem:id17general}, to the d.u.r.\ basis $\textit{DUR}(\mathbb{S}_{k+1} \sqcup \Gamma_{0,\ell}) \sqcup \textit{DUR}(\Gamma_{k+1,\ell})$ of the RHS. This will yield a basis of the LHS, i.e.\ of $\mathscr{S}_{1}(\Gamma_{k,\ell+1}^{\textnormal{sq}})$, and we will then use \cref{lem:rewritingdots} and a change of basis to turn the obtained basis into the d.u.r.\ set.

Let's start by studying the image of the d.u.r.\ set of $\Gamma_{k+1,\ell}$ under $\phi_1$.
From its movie \cref{eq:moviephi1} we obtain
$$\gamma_{i,\theta_\mathbf{a}}(\Gamma_{k+1,\ell}) \overset{\phi_1}{\longmapsto} \gamma_{0,i,\theta_\mathbf{a}}(\Gamma_{k,\ell+1}^{\textnormal{sq}}).$$
Using \cref{lem:rewritingdots}, point (2), to express $\gamma_{0,i,\theta_\mathbf{a}}(\Gamma_{k,\ell+1}^{\textnormal{sq}})$ in terms of the decorated graphs of $\textit{DUR}(\Gamma_{k,\ell+1}^{\textnormal{sq}})$, we get:
\begin{equation*}
\begin{split}
\gamma_{0,\theta_\mathbf{a}} & \overset{\phi_1}{\longmapsto} \gamma_{0,0,\theta_\mathbf{a}} \\
\gamma_{1,\theta_\mathbf{a}} & \overset{\phi_1}{\longmapsto} \gamma_{0,1,\theta_\mathbf{a}} \\
\gamma_{i+1,\theta_\mathbf{a}} & \overset{\phi_1}{\longmapsto} \gamma_{i,1,\theta_{\mathbf{a}}} + \sum_{t=1}^m \sum_{|\widetilde{\mathbf{a}}|=|\mathbf{a}|+t} b_{\widetilde{\mathbf{a}}}(\gamma_{i-t,1,\theta_{\widetilde{\mathbf{a}}}}-\gamma_{i-t+1,0,\theta_{\widetilde{\mathbf{a}}}}) \; \text{ for } 1 \leq i \leq k.
\end{split}
\end{equation*}

As for the maps $\phi_0^{\alpha}$, for $0 \leq \alpha \leq k-1$, we find, for $\mathbf{a} \in \{0,1\}^{\ell}$
\[
\theta_\mathbf{a} \overset{\phi_0^{\alpha}}{\longmapsto} \sum_{r+s=\alpha} \gamma_{s,r+1,\theta_\mathbf{a}}(\Gamma_{k,\ell+1}^{\textnormal{sq}}) - \gamma_{s+1,r,\theta_\mathbf{a}}(\Gamma_{k,\ell+1}^{\textnormal{sq}}).
\]
Again, by \cref{lem:rewritingdots}, point (1):

\begin{equation*}
\begin{split}
\theta_\mathbf{a} & \overset{\phi_0^0}{\longmapsto} \gamma_{0,1,\theta_\mathbf{a}} - \gamma_{1,0,\theta_\mathbf{a}} \\
\theta_\mathbf{a} & \overset{\phi_0^i}{\longmapsto} \gamma_{i,1,\theta_{\mathbf{a}}} - \gamma_{i+1,0,\theta_{\mathbf{a}}} + \sum_{t=1}^m \sum_{|\widetilde{\mathbf{a}}|=|\mathbf{a}|+t} b_{\widetilde{\mathbf{a}}}(\gamma_{i-t,1,\theta_{\widetilde{\mathbf{a}}}} - \gamma_{i-t+1,0,\theta_{\widetilde{\mathbf{a}}}}) \; \text{ for } 1 \leq i \leq k-1.
\end{split}
\end{equation*}

 where $m=\min\{i,\ell -|\mathbf{a}|\}$.

We have expressed the images of the d.u.r.\ bases under $\phi_1$ and $\phi_0^{\alpha}$ as linear combinations of the d.u.r.\ set of $\Gamma_{k,\ell + 1}^{\text{sq}}$. Since we are working over $\Z$, however, in order to show that $\Gamma_{k,\ell + 1}^{\text{sq}}$ is d.u.r.\, we need to make sure that there is a change of basis that turns the basis we just obtained into the d.u.r.\ set. We start by calling, for $0 \leq i \leq k-1$ and $\mathbf{a} \in \{0,1\}^{\ell}$:
\[
\chi_{\mathbf{a}}^{i} = \gamma_{i,1,\theta_{\mathbf{a}}}(\Gamma_{k,\ell+1}^{\textnormal{sq}}) - \gamma_{i+1,0,\theta_{\mathbf{a}}}(\Gamma_{k,\ell+1}^{\textnormal{sq}})
\]
Then:

\begin{equation*}
\begin{gathered}
\phi_0^0(\theta_{\mathbf{a}})=\chi_{\mathbf{a}}^0 \\
\phi_0^1(\theta_{\mathbf{a}})-\sum_{|\widetilde{\mathbf{a}}|=|\mathbf{a}|+1} b_{\widetilde{\mathbf{a}}}\chi_{\widetilde{\mathbf{a}}}^0=\chi_{\mathbf{a}}^1 \\
\vdots \\
\phi_0^i(\theta_{\mathbf{a}}) - \sum_{t=1}^m \sum_{|\widetilde{\mathbf{a}}|=|\mathbf{a}|+t} b_{\widetilde{\mathbf{a}}}\chi_{\widetilde{\mathbf{a}}}^{i-t} =\chi_{\mathbf{a}}^i.
\end{gathered}
\end{equation*}

Now, in order to obtain the $\gamma_{i,1,\theta_{\mathbf{a}}}(\Gamma_{k,\ell+1}^{\textnormal{sq}})$ for all $0 \leq i \leq k$ and $\mathbf{a} \in \{0,1\}^{\ell}$, we do:

\begin{equation*}
\phi_1(\gamma_{i+1,\theta_{\mathbf{a}}}(\Gamma_{k+1,\ell})) - \sum_{t=1}^m \sum_{|\widetilde{\mathbf{a}}|=|\mathbf{a}|+t} b_{\widetilde{\mathbf{a}}}\chi_{\widetilde{\mathbf{a}}}^{i-t} = \gamma_{i,1,\theta_{\mathbf{a}}}(\Gamma_{k,\ell+1}^{\textnormal{sq}}).
\end{equation*}

Finally, we obtain the $\gamma_{i,0,\theta_{\mathbf{a}}}(\Gamma_{k,\ell+1}^{\textnormal{sq}})$:

\begin{equation*}
\begin{gathered}
\phi_1(\gamma_{0,\theta_{\mathbf{a}}}(\Gamma_{k+1,\ell})) = \gamma_{0,0,\theta_{\mathbf{a}}}(\Gamma_{k,\ell+1}^{\textnormal{sq}}) \\
\chi_{\mathbf{a}}^{i-1} - \gamma_{i-1,1,\theta_{\mathbf{a}}}(\Gamma_{k,\ell+1}^{\textnormal{sq}}) = \gamma_{i,0,\theta_{\mathbf{a}}}(\Gamma_{k,\ell+1}^{\textnormal{sq}}) \; \text{ for } 1 \leq i \leq k.
\end{gathered}
\end{equation*}
This ends the proof of the proposition.
\end{proof}

\section{Basis for elementary graphs}
\label{sec:basisgeneral}

When $\Gamma$ is an elementary graph, calculations inside $\mathscr{S}_1(\Gamma)$ become much easier. The dot migration identity \cref{eq:dotmigration}, for instance, becomes:
\begin{equation} \label{eq:dotmigrationelementary}
{\small\raisebox{-0.8\baselineskip}{\def\svgwidth{80pt}
\begingroup%
  \makeatletter%
  \providecommand\color[2][]{%
    \errmessage{(Inkscape) Color is used for the text in Inkscape, but the package 'color.sty' is not loaded}%
    \renewcommand\color[2][]{}%
  }%
  \providecommand\transparent[1]{%
    \errmessage{(Inkscape) Transparency is used (non-zero) for the text in Inkscape, but the package 'transparent.sty' is not loaded}%
    \renewcommand\transparent[1]{}%
  }%
  \providecommand\rotatebox[2]{#2}%
  \newcommand*\fsize{\dimexpr\f@size pt\relax}%
  \newcommand*\lineheight[1]{\fontsize{\fsize}{#1\fsize}\selectfont}%
  \ifx\svgwidth\undefined%
    \setlength{\unitlength}{119.04948114bp}%
    \ifx\svgscale\undefined%
      \relax%
    \else%
      \setlength{\unitlength}{\unitlength * \real{\svgscale}}%
    \fi%
  \else%
    \setlength{\unitlength}{\svgwidth}%
  \fi%
  \global\let\svgwidth\undefined%
  \global\let\svgscale\undefined%
  \makeatother%
  \begin{picture}(1,0.24597832)%
    \lineheight{1}%
    \setlength\tabcolsep{0pt}%
    \put(0,0){\includegraphics[width=\unitlength,page=1]{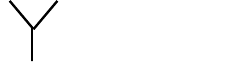}}%
    \put(-0.00278902,0.07548136){\color[rgb]{0.90588235,0.4,0.41568627}\makebox(0,0)[lt]{\lineheight{1.25}\smash{\begin{tabular}[t]{l}$e_1$\end{tabular}}}}%
    \put(0,0){\includegraphics[width=\unitlength,page=2]{e1migration.pdf}}%
    \put(0.48271505,0.21866826){\color[rgb]{0.90588235,0.4,0.41568627}\makebox(0,0)[lt]{\lineheight{1.25}\smash{\begin{tabular}[t]{l}$1$\end{tabular}}}}%
    \put(0.93865799,0.11118063){\color[rgb]{0.90588235,0.4,0.41568627}\makebox(0,0)[lt]{\lineheight{1.25}\smash{\begin{tabular}[t]{l}$1$\end{tabular}}}}%
    \put(0.31049348,0.09754734){\color[rgb]{0,0,0}\makebox(0,0)[lt]{\lineheight{1.25}\smash{\begin{tabular}[t]{l}$=$\end{tabular}}}}%
    \put(0.66509637,0.09801458){\color[rgb]{0,0,0}\makebox(0,0)[lt]{\lineheight{1.25}\smash{\begin{tabular}[t]{l}$+$\end{tabular}}}}%
    \put(0,0){\includegraphics[width=\unitlength,page=3]{e1migration.pdf}}%
  \end{picture}%
\endgroup%
} \quad , \qquad\qquad \raisebox{-0.8\baselineskip}{\def\svgwidth{56pt}
\begingroup%
  \makeatletter%
  \providecommand\color[2][]{%
    \errmessage{(Inkscape) Color is used for the text in Inkscape, but the package 'color.sty' is not loaded}%
    \renewcommand\color[2][]{}%
  }%
  \providecommand\transparent[1]{%
    \errmessage{(Inkscape) Transparency is used (non-zero) for the text in Inkscape, but the package 'transparent.sty' is not loaded}%
    \renewcommand\transparent[1]{}%
  }%
  \providecommand\rotatebox[2]{#2}%
  \newcommand*\fsize{\dimexpr\f@size pt\relax}%
  \newcommand*\lineheight[1]{\fontsize{\fsize}{#1\fsize}\selectfont}%
  \ifx\svgwidth\undefined%
    \setlength{\unitlength}{82.68436739bp}%
    \ifx\svgscale\undefined%
      \relax%
    \else%
      \setlength{\unitlength}{\unitlength * \real{\svgscale}}%
    \fi%
  \else%
    \setlength{\unitlength}{\svgwidth}%
  \fi%
  \global\let\svgwidth\undefined%
  \global\let\svgscale\undefined%
  \makeatother%
  \begin{picture}(1,0.35349429)%
    \lineheight{1}%
    \setlength\tabcolsep{0pt}%
    \put(-0.00401565,0.10867833){\color[rgb]{0.90588235,0.4,0.41568627}\makebox(0,0)[lt]{\lineheight{1.25}\smash{\begin{tabular}[t]{l}$e_2$\end{tabular}}}}%
    \put(0.59750692,0.21198528){\color[rgb]{0.90588235,0.4,0.41568627}\makebox(0,0)[lt]{\lineheight{1.25}\smash{\begin{tabular}[t]{l}$1$\end{tabular}}}}%
    \put(0.4289092,0.14044907){\color[rgb]{0,0,0}\makebox(0,0)[lt]{\lineheight{1.25}\smash{\begin{tabular}[t]{l}$=$\end{tabular}}}}%
    \put(0.91167938,0.21495709){\color[rgb]{0.90588235,0.4,0.41568627}\makebox(0,0)[lt]{\lineheight{1.25}\smash{\begin{tabular}[t]{l}$1$\end{tabular}}}}%
    \put(0,0){\includegraphics[width=\unitlength,page=1]{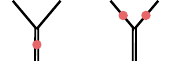}}%
  \end{picture}%
\endgroup%
}}
\end{equation}

and the \emph{zip} map is now given by:
\begin{equation} \label{eq:zipelementary}
{\small \raisebox{-1\baselineskip}{\def\svgwidth{100pt}
\begingroup%
  \makeatletter%
  \providecommand\color[2][]{%
    \errmessage{(Inkscape) Color is used for the text in Inkscape, but the package 'color.sty' is not loaded}%
    \renewcommand\color[2][]{}%
  }%
  \providecommand\transparent[1]{%
    \errmessage{(Inkscape) Transparency is used (non-zero) for the text in Inkscape, but the package 'transparent.sty' is not loaded}%
    \renewcommand\transparent[1]{}%
  }%
  \providecommand\rotatebox[2]{#2}%
  \newcommand*\fsize{\dimexpr\f@size pt\relax}%
  \newcommand*\lineheight[1]{\fontsize{\fsize}{#1\fsize}\selectfont}%
  \ifx\svgwidth\undefined%
    \setlength{\unitlength}{164.33212535bp}%
    \ifx\svgscale\undefined%
      \relax%
    \else%
      \setlength{\unitlength}{\unitlength * \real{\svgscale}}%
    \fi%
  \else%
    \setlength{\unitlength}{\svgwidth}%
  \fi%
  \global\let\svgwidth\undefined%
  \global\let\svgscale\undefined%
  \makeatother%
  \begin{picture}(1,0.21708712)%
    \lineheight{1}%
    \setlength\tabcolsep{0pt}%
    \put(0,0){\includegraphics[width=\unitlength,page=1]{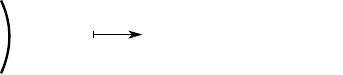}}%
    \put(0.28047065,0.16103886){\color[rgb]{0,0,0}\makebox(0,0)[lt]{\lineheight{1.25}\smash{\begin{tabular}[t]{l}$\textit{zip}$\end{tabular}}}}%
    \put(0,0){\includegraphics[width=\unitlength,page=2]{zipelementary.pdf}}%
    \put(0.71553273,0.16328559){\color[rgb]{0.96470588,0.4,0.36470588}\makebox(0,0)[lt]{\lineheight{1.25}\smash{\begin{tabular}[t]{l}$1$\end{tabular}}}}%
    \put(0.84219987,0.02935725){\color[rgb]{0.96470588,0.4,0.36470588}\makebox(0,0)[lt]{\lineheight{1.25}\smash{\begin{tabular}[t]{l}$1$\end{tabular}}}}%
    \put(0.78259245,0.09649856){\color[rgb]{0,0,0}\makebox(0,0)[lt]{\lineheight{1.25}\smash{\begin{tabular}[t]{l}$-$\end{tabular}}}}%
  \end{picture}%
\endgroup%
}}   
\end{equation}

The following also holds.
\begin{lem} \label{lem:unbubbleidentities}
When $k=1$, the \textit{unbubble} map \cref{eq:unbubble} yields the following, for all $i \in \N$:
\begin{equation} \label{eq:unbubbleidentitiesk}
\textit{unbubble }(\quad{\tiny \raisebox{-1.25\baselineskip}{\def\svgwidth{20pt}
\begingroup%
  \makeatletter%
  \providecommand\color[2][]{%
    \errmessage{(Inkscape) Color is used for the text in Inkscape, but the package 'color.sty' is not loaded}%
    \renewcommand\color[2][]{}%
  }%
  \providecommand\transparent[1]{%
    \errmessage{(Inkscape) Transparency is used (non-zero) for the text in Inkscape, but the package 'transparent.sty' is not loaded}%
    \renewcommand\transparent[1]{}%
  }%
  \providecommand\rotatebox[2]{#2}%
  \newcommand*\fsize{\dimexpr\f@size pt\relax}%
  \newcommand*\lineheight[1]{\fontsize{\fsize}{#1\fsize}\selectfont}%
  \ifx\svgwidth\undefined%
    \setlength{\unitlength}{37.1168615bp}%
    \ifx\svgscale\undefined%
      \relax%
    \else%
      \setlength{\unitlength}{\unitlength * \real{\svgscale}}%
    \fi%
  \else%
    \setlength{\unitlength}{\svgwidth}%
  \fi%
  \global\let\svgwidth\undefined%
  \global\let\svgscale\undefined%
  \makeatother%
  \begin{picture}(1,1.12966821)%
    \lineheight{1}%
    \setlength\tabcolsep{0pt}%
    \put(0,0){\includegraphics[width=\unitlength,page=1]{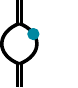}}%
    \put(0.48426192,0.81726358){\color[rgb]{0,0.53333333,0.63137255}\makebox(0,0)[lt]{\lineheight{1.25}\smash{\begin{tabular}[t]{l}\small $i$\end{tabular}}}}%
  \end{picture}%
\endgroup%
}})= - \textit{ unbubble }(\quad{\tiny \raisebox{-1.25\baselineskip}{\def\svgwidth{15pt}
\begingroup%
  \makeatletter%
  \providecommand\color[2][]{%
    \errmessage{(Inkscape) Color is used for the text in Inkscape, but the package 'color.sty' is not loaded}%
    \renewcommand\color[2][]{}%
  }%
  \providecommand\transparent[1]{%
    \errmessage{(Inkscape) Transparency is used (non-zero) for the text in Inkscape, but the package 'transparent.sty' is not loaded}%
    \renewcommand\transparent[1]{}%
  }%
  \providecommand\rotatebox[2]{#2}%
  \newcommand*\fsize{\dimexpr\f@size pt\relax}%
  \newcommand*\lineheight[1]{\fontsize{\fsize}{#1\fsize}\selectfont}%
  \ifx\svgwidth\undefined%
    \setlength{\unitlength}{27.5763251bp}%
    \ifx\svgscale\undefined%
      \relax%
    \else%
      \setlength{\unitlength}{\unitlength * \real{\svgscale}}%
    \fi%
  \else%
    \setlength{\unitlength}{\svgwidth}%
  \fi%
  \global\let\svgwidth\undefined%
  \global\let\svgscale\undefined%
  \makeatother%
  \begin{picture}(1,1.52049769)%
    \lineheight{1}%
    \setlength\tabcolsep{0pt}%
    \put(0,0){\includegraphics[width=\unitlength,page=1]{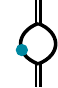}}%
    \put(0,0.36094429){\color[rgb]{0,0.53333333,0.63137255}\makebox(0,0)[lt]{\lineheight{1.25}\smash{\begin{tabular}[t]{l}\small $i$\end{tabular}}}}%
  \end{picture}%
\endgroup%
}}\quad)\, , \qquad \textit{unbubble }(\quad{\tiny \raisebox{-1.25\baselineskip}{\def\svgwidth{24pt}
\begingroup%
  \makeatletter%
  \providecommand\color[2][]{%
    \errmessage{(Inkscape) Color is used for the text in Inkscape, but the package 'color.sty' is not loaded}%
    \renewcommand\color[2][]{}%
  }%
  \providecommand\transparent[1]{%
    \errmessage{(Inkscape) Transparency is used (non-zero) for the text in Inkscape, but the package 'transparent.sty' is not loaded}%
    \renewcommand\transparent[1]{}%
  }%
  \providecommand\rotatebox[2]{#2}%
  \newcommand*\fsize{\dimexpr\f@size pt\relax}%
  \newcommand*\lineheight[1]{\fontsize{\fsize}{#1\fsize}\selectfont}%
  \ifx\svgwidth\undefined%
    \setlength{\unitlength}{45.49945057bp}%
    \ifx\svgscale\undefined%
      \relax%
    \else%
      \setlength{\unitlength}{\unitlength * \real{\svgscale}}%
    \fi%
  \else%
    \setlength{\unitlength}{\svgwidth}%
  \fi%
  \global\let\svgwidth\undefined%
  \global\let\svgscale\undefined%
  \makeatother%
  \begin{picture}(1,0.92154384)%
    \lineheight{1}%
    \setlength\tabcolsep{0pt}%
    \put(0,0){\includegraphics[width=\unitlength,page=1]{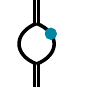}}%
    \put(0.5792789,0.66981223){\color[rgb]{0,0.53333333,0.63137255}\makebox(0,0)[lt]{\lineheight{1.25}\smash{\begin{tabular}[t]{l}\small $i$\end{tabular}}}}%
    \put(0,0){\includegraphics[width=\unitlength,page=2]{unbubblek-k.pdf}}%
    \put(0,0.1757352){\color[rgb]{0,0.53333333,0.63137255}\makebox(0,0)[lt]{\lineheight{1.25}\smash{\begin{tabular}[t]{l}\small $i$\end{tabular}}}}%
  \end{picture}%
\endgroup%
}})=0
\end{equation}
\begin{equation} \label{eq:unbubbleidentities1}
{\tiny \raisebox{-1.25\baselineskip}{\def\svgwidth{73pt}
\begingroup%
  \makeatletter%
  \providecommand\color[2][]{%
    \errmessage{(Inkscape) Color is used for the text in Inkscape, but the package 'color.sty' is not loaded}%
    \renewcommand\color[2][]{}%
  }%
  \providecommand\transparent[1]{%
    \errmessage{(Inkscape) Transparency is used (non-zero) for the text in Inkscape, but the package 'transparent.sty' is not loaded}%
    \renewcommand\transparent[1]{}%
  }%
  \providecommand\rotatebox[2]{#2}%
  \newcommand*\fsize{\dimexpr\f@size pt\relax}%
  \newcommand*\lineheight[1]{\fontsize{\fsize}{#1\fsize}\selectfont}%
  \ifx\svgwidth\undefined%
    \setlength{\unitlength}{145.09920257bp}%
    \ifx\svgscale\undefined%
      \relax%
    \else%
      \setlength{\unitlength}{\unitlength * \real{\svgscale}}%
    \fi%
  \else%
    \setlength{\unitlength}{\svgwidth}%
  \fi%
  \global\let\svgwidth\undefined%
  \global\let\svgscale\undefined%
  \makeatother%
  \begin{picture}(1,0.28911018)%
    \lineheight{1}%
    \setlength\tabcolsep{0pt}%
    \put(0,0){\includegraphics[width=\unitlength,page=1]{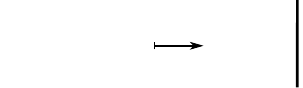}}%
    \put(0.39425012,0.18131002){\color[rgb]{0,0,0}\makebox(0,0)[lt]{\lineheight{1.25}\smash{\begin{tabular}[t]{l}\small $\textit{unbubble}$\end{tabular}}}}%
    \put(0,0){\includegraphics[width=\unitlength,page=2]{unbubble1.pdf}}%
    \put(0,0.11835025){\color[rgb]{0,0.53333333,0.63137255}\makebox(0,0)[lt]{\lineheight{1.25}\smash{\begin{tabular}[t]{l}\small $1$\end{tabular}}}}%
    \put(0,0){\includegraphics[width=\unitlength,page=3]{unbubble1.pdf}}%
  \end{picture}%
\endgroup%
}}    
\end{equation}    
\end{lem}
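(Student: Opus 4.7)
The plan is to unpack the explicit formula for \textit{unbubble} given in \eqref{eq:unbubble}. Specializing to $k=1$, the polynomial $\widetilde{p}_\alpha$ lives in two variables $x_1,x_2$ and reads
\[
\widetilde{p}_\alpha(x_1,x_2) \;=\; \frac{p_1(x_2)\,p_1(x_1)\,x_1^\alpha}{x_2-x_1} \;+\; \frac{p_1(x_1)\,p_1(x_2)\,x_2^\alpha}{x_1-x_2},
\]
where the left $p_1$ is the decoration on the $k$-edge of the bigon and the right $p_1$ is the one on the $1$-edge. Each identity in the lemma will follow from a direct substitution into this formula, after first using \cref{lem:dotmigration} (in its elementary form \eqref{eq:dotmigrationelementary}) to move any dot decoration onto the appropriate bigon edge and then reading off the resulting single-strand decoration.

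For the first identity in \eqref{eq:unbubbleidentitiesk}, the bigon carrying $\bullet^i$ on the right $1$-edge corresponds to $p_k=1$, $p_1=1$, $\alpha=i$, yielding $\widetilde{p}_i=(x_1^i-x_2^i)/(x_2-x_1)$. Migrating the dot in the other bigon onto the $k$-edge gives $p_k=x_1^i$, $p_1=1$, $\alpha=0$, hence $\widetilde{p}_0=(x_2^i-x_1^i)/(x_2-x_1)$. These two polynomials are negatives of each other, which proves the first equality after applying \textit{unbubble}. For the second identity in \eqref{eq:unbubbleidentitiesk}, having $\bullet^i$ on both edges gives $p_k=x_1^i$ and $p_1\cdot x^\alpha = x^i$, so
\[
\widetilde{p}_i \;=\; x_1^i\, x_2^i\Bigl(\tfrac{1}{x_2-x_1}+\tfrac{1}{x_1-x_2}\Bigr) \;=\; 0,
\]
and the image is the zero decorated graph.

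For \eqref{eq:unbubbleidentities1}, the same computation with $p_k=p_1=1$ and $\alpha=1$ gives $\widetilde{p}_1 = (x_1-x_2)/(x_2-x_1) = -1$, so \textit{unbubble} produces the constant $-1$ times the undecorated single-strand graph, which is precisely the stated RHS (up to the sign shown in the picture).

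The argument requires essentially no obstacle beyond careful sign bookkeeping and the verification that each dot migration used is the $k=1$ specialization \eqref{eq:dotmigrationelementary}; the rest is elementary manipulation of the two-term Lagrange-style formula for $\widetilde{p}_\alpha$.
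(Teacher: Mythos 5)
Your computation is correct, and it is exactly the verification the paper intends: the lemma is stated without proof precisely because it follows from substituting $k=1$ into the two-term formula for $\widetilde{p}_{\alpha}$ in \cref{eq:unbubble}, which is what you do (the appeal to dot migration is not actually needed, since in all three pictures the decorations already sit on the edges of the bigon). The only point to pin down is the sign in \cref{eq:unbubbleidentities1}: with the convention of \cref{eq:unbubble} the $x_i^{\alpha}$ factor is attached to the $1$-labelled (right) edge, so a dot on the right edge gives $\widetilde{p}=(x_1-x_2)/(x_2-x_1)=-1$ while a dot on the left ($k$-labelled) edge gives $+1$; your formula covers both cases, and matching it to the figure resolves the hedge in your last paragraph.
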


\begin{remark} \label{rem:graphrepsarelocal}

\begin{itemize}
    \item Unlike in the previous section, the graphs of this section are only represented locally, in the tangles where they differ from one another. For instance, the figure $\reflectbox {\raisebox{-0.8\baselineskip}{\def\svgwidth{20pt}
\begingroup%
  \makeatletter%
  \providecommand\color[2][]{%
    \errmessage{(Inkscape) Color is used for the text in Inkscape, but the package 'color.sty' is not loaded}%
    \renewcommand\color[2][]{}%
  }%
  \providecommand\transparent[1]{%
    \errmessage{(Inkscape) Transparency is used (non-zero) for the text in Inkscape, but the package 'transparent.sty' is not loaded}%
    \renewcommand\transparent[1]{}%
  }%
  \providecommand\rotatebox[2]{#2}%
  \newcommand*\fsize{\dimexpr\f@size pt\relax}%
  \newcommand*\lineheight[1]{\fontsize{\fsize}{#1\fsize}\selectfont}%
  \ifx\svgwidth\undefined%
    \setlength{\unitlength}{32.43885113bp}%
    \ifx\svgscale\undefined%
      \relax%
    \else%
      \setlength{\unitlength}{\unitlength * \real{\svgscale}}%
    \fi%
  \else%
    \setlength{\unitlength}{\svgwidth}%
  \fi%
  \global\let\svgwidth\undefined%
  \global\let\svgscale\undefined%
  \makeatother%
  \begin{picture}(1,1.21529287)%
    \lineheight{1}%
    \setlength\tabcolsep{0pt}%
    \put(0,0){\includegraphics[width=\unitlength,page=1]{graphid17-1l.pdf}}%
  \end{picture}%
\endgroup%
}}$ stands for a generic vinyl graph $\Gamma$ that contains a tangle of shape $\reflectbox {\raisebox{-0.8\baselineskip}{\def\svgwidth{20pt}}}$.

    \item Similarly, when writing $\gamma_{i,j}(\,\reflectbox {\raisebox{-0.8\baselineskip}{\def\svgwidth{20pt}}}\,)$, we are really referring to the subset of $\textit{DUR}(\Gamma)$ containing all decorated graphs $\gamma_{i,j}^{\mathbf{a}}$ that have decoration dictated by $\gamma_{i,j}$ on the tangle $\reflectbox {\raisebox{-0.8\baselineskip}{\def\svgwidth{20pt}}}$ and by the tuple $\mathbf{a}$ outside of it.
\end{itemize}

\end{remark}

\begin{lem} \label{lem:id17preservesdur}
\begin{enumerate}
    \item When $k=1$, the LHS of identity \cref{eq:id17general} is d.u.r.\ if and only if the two vinyl graphs on the RHS are d.u.r.
    \item The same statement holds for the mirror of \cref{eq:id17general}.
\end{enumerate}
\end{lem}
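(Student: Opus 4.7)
The plan is to use the isomorphism from \cref{lem:id17general}, which when $k=1$ reduces (thanks to $[1]=1$) to a direct sum of only two summands
\[
\mathscr{S}_1(\text{LHS of \cref{eq:id17general}}) \simeq \mathscr{S}_1(\text{first RHS}) \oplus \mathscr{S}_1(\text{second RHS}),
\]
realized by the pair of mutually inverse morphisms $\phi=\bigl(\phi_0^0\ \ \phi_1\bigr)$ and the column-vector map with components $\psi_0^0$ and $\psi_1$. A short count based on $\llangle\cdot\rrangle_1$ (and the fact that the degrees of d.u.r.\ decorations reproduce the graded rank) shows that the cardinality $|\textit{DUR}(\text{LHS})|$ equals $|\textit{DUR}(\text{first RHS})|+|\textit{DUR}(\text{second RHS})|$, so the cardinalities line up correctly before any basis comparison is attempted.

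For the \emph{if} direction, assuming both RHS graphs are d.u.r., I would compute the images of each d.u.r.\ basis element under $\phi_0^0$ and $\phi_1$ by unfolding the movies \cref{eq:moviephi0alpha} and \cref{eq:moviephi1}, applying at each intermediate stage the elementary morphisms (\emph{bubble}, \emph{zip}, \emph{tree iso.}, $\varphi^\alpha$) together with the simplified dot migration of \cref{eq:dotmigrationelementary} and the elementary form of the \emph{zip} map \cref{eq:zipelementary}. Because every edge in the local tangle carries label $1$ or $2$, the relevant Schur polynomials are elementary polynomials in one or two variables, and each image becomes an explicit integer linear combination of d.u.r.\ decorated graphs of the LHS. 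Ordering $\textit{DUR}(\text{LHS})$ by the degree of the decoration and the d.u.r.\ basis of the RHS summands compatibly, I expect the resulting transition matrix to be (block) triangular with diagonal entries $\pm 1$, hence invertible over $\Z$, so that the image is itself a d.u.r.\ basis after an integer change of basis.

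For the \emph{only if} direction I would argue symmetrically, applying $\psi_0^0=\emph{unbubble}\circ\emph{unzip}\circ\xi^0$ and $\psi_1=\emph{unbubble}\circ\text{tree iso.}\circ\emph{zip}$ to each element of $\textit{DUR}(\text{LHS})$. The simplified unbubble identities \cref{eq:unbubbleidentitiesk}--\cref{eq:unbubbleidentities1}, available only because $k=1$, keep the outputs tractable: a decoration on the inner $2$-labeled edge unbubbles to either $0$ or a single clean d.u.r.-style generator, and again one obtains a $\Z$-invertible transition matrix between the two systems. Part (2) of the statement, for the mirror of \cref{eq:id17general}, follows from an identical computation after swapping the roles of split and merge vertices, since dot migration and all the elementary morphisms behave symmetrically under rotation by $\pi$.

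The main obstacle is ensuring that the change-of-basis matrix has determinant $\pm 1$ rather than merely being invertible over $\Q$. This is precisely where the hypothesis $k=1$ plays a crucial role: it reduces all Schur-polynomial bookkeeping to monomials in a single variable, trivializes the $(k+1)$-fold \emph{unbubble} sum to the clean outputs of \cref{lem:unbubbleidentities}, and forces the dot-rewriting (in the spirit of \cref{lem:rewritingdots}) to produce only $\pm 1$ coefficients along the diagonal of the transition matrix. Apart from this integrality issue, the argument is a direct analogue of the inductive step of \cref{prop:gammaklisdur}, carried out locally rather than in the specific $\Gamma_{k,\ell}$ family.
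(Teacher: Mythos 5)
Your overall skeleton (use the $k=1$ specialization of \cref{eq:id17general}, push the d.u.r.\ bases through $\phi=(\phi_0^0\ \ \phi_1)$ and $\psi=(\psi_0^0,\psi_1)^T$, and exhibit an integral change of basis) matches the paper, but there are two genuine gaps. First, your claim that part (2) ``follows from an identical computation after swapping the roles of split and merge vertices'' is false: the d.u.r.\ set is defined by decorating the \emph{upper-right} edge of each split vertex, so it does not intertwine the mirror symmetry — the left and right edges at a split vertex play different roles. The paper states this explicitly and gives genuinely different arguments for (1) and (2); e.g.\ in one direction of (1) the problematic image is $\psi_0(\gamma_{1,1})$, a two-strand tangle with a dot on a strand where the d.u.r.\ set allows no decoration at all, a situation that simply does not arise in the mirror case.

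Second, the assertion that the transition matrix is ``(block) triangular with diagonal entries $\pm 1$'' is unjustified and hides the actual crux. Several of the images are \emph{not} d.u.r.\ elements: $\psi_0(\gamma_{1,1})$ is a dotted two-strand, $\phi_1(\gamma_2)$ produces $\gamma_{0,2}$ (decoration degree $2$, outside $T(1,1)$), and $\phi_0(\gamma)$ in the mirror case produces a dotted square. Rewriting these in the d.u.r.\ basis introduces coefficients $b_{\mathbf a}$ that depend on the entire closed graph, not just the local tangle, and are a priori only rational; there is no local triangularity forcing them to be $\pm 1$ or even integral. The paper's key idea, which your proposal is missing, is an integrality bootstrap: one applies $\phi_0$ to the suspect element, observes that its image $\sum_{\mathbf a} b_{\mathbf a}(\gamma_{0,1}^{\mathbf a}-\gamma_{1,0}^{\mathbf a})$ lies in a module where the d.u.r.\ set is already known to be a $\Z$-basis, and concludes $b_{\mathbf a}\in\Z$ by contradiction. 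Without some argument of this type (or an explicit computation of the $b_{\mathbf a}$), the passage from a $\Q$-basis to a $\Z$-basis does not go through. The rest of your outline — the rank count, the use of \cref{eq:dotmigrationelementary}, \cref{eq:zipelementary} and \cref{lem:unbubbleidentities} to make the movies explicit — is consistent with what the paper does.
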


While there is an apparent symmetry of statement (1) and (2), the d.u.r.\ set does not intertwine this symmetry: the left and right edges adjacent to a split vertex play a different role. Hence the proof of (1) and (2) are different.

\begin{proof}
Let us start by proving point (2) of the statement. The mirror of identity \cref{eq:id17general}, for $k=1$, is given in \cref{fig:mapsid17}. 

\begin{figure}[ht]
\centering
\begin{tikzcd}[column sep=1em]
	\mathscr{S}_1(\,\reflectbox {\raisebox{-0.8\baselineskip}{\def\svgwidth{20pt}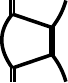}}\,)
	\arrow[rr, orange, swap, shift right=3,  start anchor={[xshift=-2ex]}, end anchor={[xshift=2ex]}, bend right, "\psi_0"] \arrow[rrrr, orange, swap, shift right=3,  start anchor={[xshift=-3ex]}, end anchor={[xshift=2ex]}, bend right=40, "\psi_1"]
	& \simeq & \mathscr{S}_1(\,\reflectbox{\raisebox{-0.8\baselineskip}{\def\svgwidth{10pt}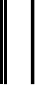}} \,)
	\arrow[ll, blue, swap, shift right=3,  start anchor={[xshift=2ex]}, end anchor={[xshift=-2ex]}, bend right, "\phi_0"]
	& \oplus
	& \mathscr{S}_1(\,\reflectbox{\raisebox{-0.8\baselineskip}{\def\svgwidth{13pt}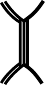}}\,)
	\arrow[llll, blue, swap, shift right=3,  start anchor={[xshift=2ex]}, end anchor={[xshift=-3ex]}, bend right=40, "\phi_1"]
\end{tikzcd}
\caption{}
\label{fig:mapsid17}
\end{figure}

We first assume that $\reflectbox {\raisebox{-0.8\baselineskip}{\def\svgwidth{20pt}}}$ is d.u.r. The d.u.r\ decorated graphs are locally given by $\gamma_{0,0},\gamma_{0,1},\gamma_{1,0},\gamma_{1,1}$ (see left-hand side of \cref{eq:imagebasisid17r}). When $k=1$ the isomorphism $\psi$ realising identity \cref{eq:id17general} is much simpler. Following the movie for $\psi_1$ on the d.u.r.\ basis one obtains
\begin{equation} \label{eq:phi1moviebasis}
{\scriptsize \raisebox{-2.2\baselineskip}{\def\svgwidth{320pt}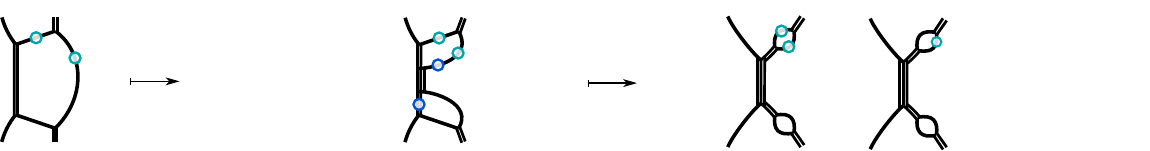}}
\end{equation}

Applying the \emph{unbubble} map to \cref{eq:phi1moviebasis}, and using \cref{lem:unbubbleidentities}, we find the images of the d.u.r.\ basis under $\psi_1$. A similar approach can be used for $\psi_0$. We get:
\begin{equation} \label{eq:imagebasisid17r}
{\small \raisebox{-6.1\baselineskip}{\def\svgwidth{230pt}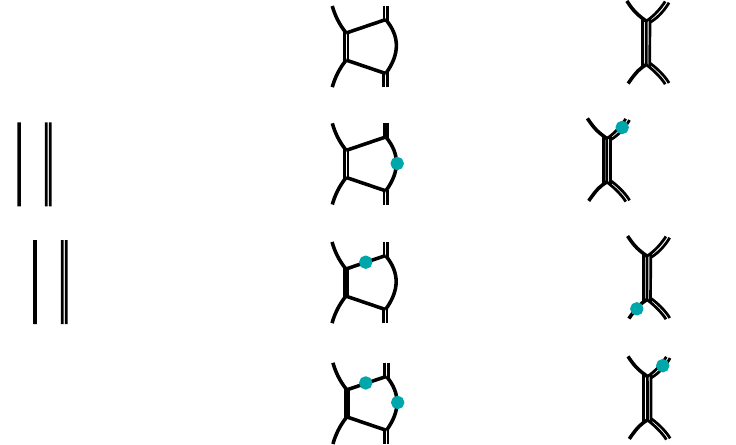}}
\end{equation}

The $\Z$-module map $\psi_0$ is surjective, therefore we deduce from \cref{eq:imagebasisid17r} that $\mathscr{S}_{1}(\reflectbox{\raisebox{-0.8\baselineskip}{\def\svgwidth{10pt}
\begingroup%
  \makeatletter%
  \providecommand\color[2][]{%
    \errmessage{(Inkscape) Color is used for the text in Inkscape, but the package 'color.sty' is not loaded}%
    \renewcommand\color[2][]{}%
  }%
  \providecommand\transparent[1]{%
    \errmessage{(Inkscape) Transparency is used (non-zero) for the text in Inkscape, but the package 'transparent.sty' is not loaded}%
    \renewcommand\transparent[1]{}%
  }%
  \providecommand\rotatebox[2]{#2}%
  \newcommand*\fsize{\dimexpr\f@size pt\relax}%
  \newcommand*\lineheight[1]{\fontsize{\fsize}{#1\fsize}\selectfont}%
  \ifx\svgwidth\undefined%
    \setlength{\unitlength}{16.49477618bp}%
    \ifx\svgscale\undefined%
      \relax%
    \else%
      \setlength{\unitlength}{\unitlength * \real{\svgscale}}%
    \fi%
  \else%
    \setlength{\unitlength}{\svgwidth}%
  \fi%
  \global\let\svgwidth\undefined%
  \global\let\svgscale\undefined%
  \makeatother%
  \begin{picture}(1,2.45038976)%
    \lineheight{1}%
    \setlength\tabcolsep{0pt}%
    \put(0,0){\includegraphics[width=\unitlength,page=1]{graphid17-3l.pdf}}%
  \end{picture}%
\endgroup%
}})$ is spanned by the decorated graph (actually, the family of decorated graphs) $\gamma = \reflectbox{\raisebox{-0.8\baselineskip}{\def\svgwidth{10pt}}}$, which thus constitutes its d.u.r.\ basis. We further observe that the three elements $\gamma_{0,0},\gamma_{0,1}+\gamma_{1,0}$ and $\gamma_{1,1}$ are in the kernel of $\psi_0$ and are linearly independent. They form a generating set for $\ker(\psi_0)$ since they span a space of the right dimension and they can be completed to a basis of $\reflectbox {\raisebox{-0.8\baselineskip}{\def\svgwidth{20pt}}}$ by adding $\gamma_{1,0}$. Their image under $\psi=\left(\begin{array}{c}
     \psi_0 \\
     \psi_1
\end{array}\right)$, 
which gives exactly $\textit{DUR}(\reflectbox{\raisebox{-0.8\baselineskip}{\def\svgwidth{13pt}
\begingroup%
  \makeatletter%
  \providecommand\color[2][]{%
    \errmessage{(Inkscape) Color is used for the text in Inkscape, but the package 'color.sty' is not loaded}%
    \renewcommand\color[2][]{}%
  }%
  \providecommand\transparent[1]{%
    \errmessage{(Inkscape) Transparency is used (non-zero) for the text in Inkscape, but the package 'transparent.sty' is not loaded}%
    \renewcommand\transparent[1]{}%
  }%
  \providecommand\rotatebox[2]{#2}%
  \newcommand*\fsize{\dimexpr\f@size pt\relax}%
  \newcommand*\lineheight[1]{\fontsize{\fsize}{#1\fsize}\selectfont}%
  \ifx\svgwidth\undefined%
    \setlength{\unitlength}{21.51854517bp}%
    \ifx\svgscale\undefined%
      \relax%
    \else%
      \setlength{\unitlength}{\unitlength * \real{\svgscale}}%
    \fi%
  \else%
    \setlength{\unitlength}{\svgwidth}%
  \fi%
  \global\let\svgwidth\undefined%
  \global\let\svgscale\undefined%
  \makeatother%
  \begin{picture}(1,1.89361977)%
    \lineheight{1}%
    \setlength\tabcolsep{0pt}%
    \put(0,0){\includegraphics[width=\unitlength,page=1]{graphid17-2l.pdf}}%
  \end{picture}%
\endgroup%
}})$, is therefore a basis of $\mathscr{S}_{1}(\reflectbox{\raisebox{-0.8\baselineskip}{\def\svgwidth{13pt}}})$. This proves that both $\reflectbox{\raisebox{-0.8\baselineskip}{\def\svgwidth{10pt}}}$ and $\reflectbox{\raisebox{-0.8\baselineskip}{\def\svgwidth{13pt}}}$ are d.u.r.

Conversely, if $\reflectbox{\raisebox{-0.8\baselineskip}{\def\svgwidth{10pt}}}$ and $\reflectbox{\raisebox{-0.8\baselineskip}{\def\svgwidth{13pt}}}$ are both d.u.r., let us group (see \cref{rem:graphrepsarelocal}) their d.u.r.\ bases into the four families $\gamma_0,\gamma_{e_1},\gamma_{e_2},\gamma$ shown on the left-hand side of \cref{eq:imagebasisid17reverse}. We will show that the d.u.r.\ bases are mapped to $\textit{DUR}(\reflectbox {\raisebox{-0.8\baselineskip}{\def\svgwidth{20pt}}})$ by the isomorphism $\phi$ of \cref{fig:mapsid17}. Using dot migration (identity \cref{eq:dotmigration}), we obtain the images of the d.u.r.\ bases under $\phi_0$ \cref{eq:moviephi0alpha} and $\phi_1$ \cref{eq:moviephi1}:
\begin{equation} \label{eq:imagebasisid17reverse}
{\small \raisebox{-6.1\baselineskip}{\def\svgwidth{200pt}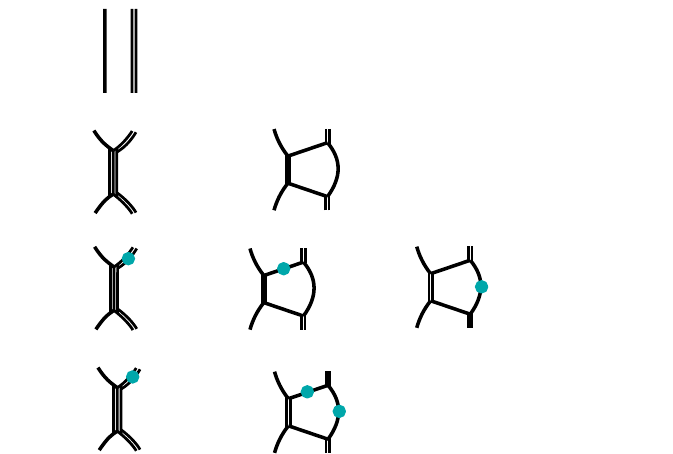}}
\end{equation}

We would like to express $\phi_0(\gamma)$ in terms of the d.u.r.\ set. We observe that $\phi_1(\raisebox{-0.8\baselineskip}{\def\svgwidth{13pt}
\begingroup%
  \makeatletter%
  \providecommand\color[2][]{%
    \errmessage{(Inkscape) Color is used for the text in Inkscape, but the package 'color.sty' is not loaded}%
    \renewcommand\color[2][]{}%
  }%
  \providecommand\transparent[1]{%
    \errmessage{(Inkscape) Transparency is used (non-zero) for the text in Inkscape, but the package 'transparent.sty' is not loaded}%
    \renewcommand\transparent[1]{}%
  }%
  \providecommand\rotatebox[2]{#2}%
  \newcommand*\fsize{\dimexpr\f@size pt\relax}%
  \newcommand*\lineheight[1]{\fontsize{\fsize}{#1\fsize}\selectfont}%
  \ifx\svgwidth\undefined%
    \setlength{\unitlength}{21.51854517bp}%
    \ifx\svgscale\undefined%
      \relax%
    \else%
      \setlength{\unitlength}{\unitlength * \real{\svgscale}}%
    \fi%
  \else%
    \setlength{\unitlength}{\svgwidth}%
  \fi%
  \global\let\svgwidth\undefined%
  \global\let\svgscale\undefined%
  \makeatother%
  \begin{picture}(1,1.89361977)%
    \lineheight{1}%
    \setlength\tabcolsep{0pt}%
    \put(0,0){\includegraphics[width=\unitlength,page=1]{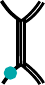}}%
  \end{picture}%
\endgroup%
})= \raisebox{-0.8\baselineskip}{\def\svgwidth{20pt}
\begingroup%
  \makeatletter%
  \providecommand\color[2][]{%
    \errmessage{(Inkscape) Color is used for the text in Inkscape, but the package 'color.sty' is not loaded}%
    \renewcommand\color[2][]{}%
  }%
  \providecommand\transparent[1]{%
    \errmessage{(Inkscape) Transparency is used (non-zero) for the text in Inkscape, but the package 'transparent.sty' is not loaded}%
    \renewcommand\transparent[1]{}%
  }%
  \providecommand\rotatebox[2]{#2}%
  \newcommand*\fsize{\dimexpr\f@size pt\relax}%
  \newcommand*\lineheight[1]{\fontsize{\fsize}{#1\fsize}\selectfont}%
  \ifx\svgwidth\undefined%
    \setlength{\unitlength}{32.43884546bp}%
    \ifx\svgscale\undefined%
      \relax%
    \else%
      \setlength{\unitlength}{\unitlength * \real{\svgscale}}%
    \fi%
  \else%
    \setlength{\unitlength}{\svgwidth}%
  \fi%
  \global\let\svgwidth\undefined%
  \global\let\svgscale\undefined%
  \makeatother%
  \begin{picture}(1,1.21529308)%
    \lineheight{1}%
    \setlength\tabcolsep{0pt}%
    \put(0,0){\includegraphics[width=\unitlength,page=1]{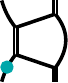}}%
  \end{picture}%
\endgroup%
}$. By hypothesis $\reflectbox{\raisebox{-0.8\baselineskip}{\def\svgwidth{13pt}}}$ is d.u.r., so we can write $\raisebox{-0.8\baselineskip}{\def\svgwidth{13pt}}$ as a linear combination of the d.u.r.\ decorated graphs: $\raisebox{-0.8\baselineskip}{\def\svgwidth{13pt}} = \displaystyle \sum_{\mathbf{a_0}} b_{\mathbf{a_0}}\gamma_0^{\mathbf{a_0}} + \sum_{\mathbf{a_1}} b_{\mathbf{a_1}}\gamma_{e_1}^{\mathbf{a_1}} + \sum_{\mathbf{a_2}} b_{\mathbf{a_2}}\gamma_{e_2}^{\mathbf{a_2}}$, with $b_{\mathbf{a_i}} \in \Z$ (see \cref{rem:graphrepsarelocal}).
Applying $\phi_1$:
$$\raisebox{-0.8\baselineskip}{\def\svgwidth{20pt}} = \sum_{\mathbf{a_0}} b_{\mathbf{a_0}}\gamma_{0,0}^{\mathbf{a_0}} + \sum_{\mathbf{a_1}} b_{\mathbf{a_1}}(\gamma_{0,1}^{\mathbf{a_1}}+\gamma_{1,0}^{\mathbf{a_1}}) + \sum_{\mathbf{a_2}} b_{\mathbf{a_2}}\gamma_{1,1}^{\mathbf{a_2}}.$$ 
Now \cref{eq:imagebasisid17reverse} yields a basis of $\reflectbox {\raisebox{-0.8\baselineskip}{\def\svgwidth{20pt}}}$ where each basis element is a linear combination of d.u.r.\ decorated graphs. One can then easily find a change of basis turning this basis into the d.u.r.\ set of $\reflectbox {\raisebox{-0.8\baselineskip}{\def\svgwidth{20pt}}}$, which proves that this graph is indeed d.u.r.

We now prove point (1) of the statement. Suppose that $\raisebox{-0.8\baselineskip}{\def\svgwidth{20pt}}$ is d.u.r., with d.u.r.\ basis grouped into the four families $\gamma_{0,0},\gamma_{0,1},\gamma_{1,0},\gamma_{1,1}$ (see left-hand side of \cref{eq:imagebasisid17l}). Similarly to the previous point, applying the isomorphism $\psi=\left(\begin{array}{c}
    \psi_0 \\
    \psi_1
\end{array}\right)$ 
that realises identity \cref{eq:id17general} yields
\begin{equation} \label{eq:imagebasisid17l}
{\small \raisebox{-6.1\baselineskip}{\def\svgwidth{200pt}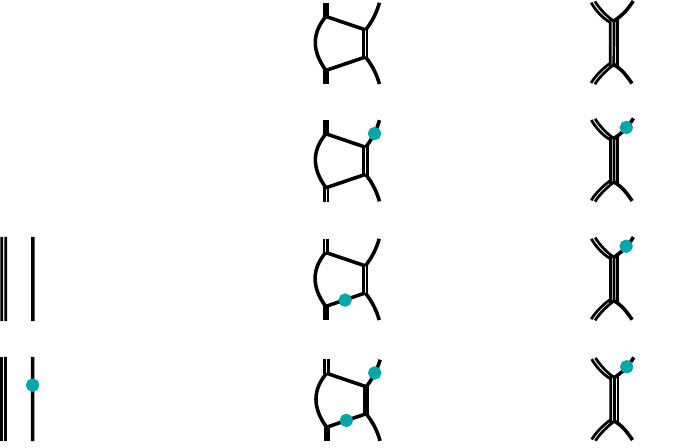}}
\end{equation}

We would like to express $\raisebox{-0.8\baselineskip}{\def\svgwidth{23pt}
\begingroup%
  \makeatletter%
  \providecommand\color[2][]{%
    \errmessage{(Inkscape) Color is used for the text in Inkscape, but the package 'color.sty' is not loaded}%
    \renewcommand\color[2][]{}%
  }%
  \providecommand\transparent[1]{%
    \errmessage{(Inkscape) Transparency is used (non-zero) for the text in Inkscape, but the package 'transparent.sty' is not loaded}%
    \renewcommand\transparent[1]{}%
  }%
  \providecommand\rotatebox[2]{#2}%
  \newcommand*\fsize{\dimexpr\f@size pt\relax}%
  \newcommand*\lineheight[1]{\fontsize{\fsize}{#1\fsize}\selectfont}%
  \ifx\svgwidth\undefined%
    \setlength{\unitlength}{39.08274274bp}%
    \ifx\svgscale\undefined%
      \relax%
    \else%
      \setlength{\unitlength}{\unitlength * \real{\svgscale}}%
    \fi%
  \else%
    \setlength{\unitlength}{\svgwidth}%
  \fi%
  \global\let\svgwidth\undefined%
  \global\let\svgscale\undefined%
  \makeatother%
  \begin{picture}(1,1.03418102)%
    \lineheight{1}%
    \setlength\tabcolsep{0pt}%
    \put(0,0){\includegraphics[width=\unitlength,page=1]{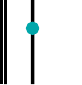}}%
    \put(0.53286808,0.60135367){\color[rgb]{0,0.65490196,0.66666667}\makebox(0,0)[lt]{\lineheight{1.25}\smash{\begin{tabular}[t]{l}$1$\end{tabular}}}}%
  \end{picture}%
\endgroup%
} = \psi_0(\gamma_{1,1})$ in terms of the d.u.r.\ set of $\mathscr{S}_1(\raisebox{-0.8\baselineskip}{\def\svgwidth{10pt}})$. 
We already know this is possible over $\Q$: $\psi(\gamma_{0,1}+\gamma_{1,0})=\raisebox{-0.8\baselineskip}{\def\svgwidth{10pt}}=\gamma$ constitutes the d.u.r.\ set of $\mathscr{S}_1(\raisebox{-0.8\baselineskip}{\def\svgwidth{10pt}})$, and it is a basis over $\Q$ since it's a generator and has the right dimension. So 
$$\raisebox{-0.8\baselineskip}{\def\svgwidth{23pt}} = \sum_{\mathbf{a}} b_{\mathbf{a}} \cdot \gamma^{\mathbf{a}}$$ 
with $b_{\mathbf{a}} \in \Q$. 
Suppose now that $\gamma$ is not a basis of $\mathscr{S}_1(\raisebox{-0.8\baselineskip}{\def\svgwidth{10pt}})$ over $\Z$. Then $b_{\mathbf{a}} \in \Q - \Z$ for some $\mathbf{a}$. This implies that 
$$\phi_0(\quad\raisebox{-0.8\baselineskip}{\def\svgwidth{23pt}}) = \phi_0(\sum_{\mathbf{a}} b_{\mathbf{a}} \cdot \gamma^{\mathbf{a}}) = \sum_{\mathbf{a}} b_{\mathbf{a}} \cdot \phi_0(\gamma^{\mathbf{a}})=\sum_{\mathbf{a}} b_{\mathbf{a}} (\gamma_{0,1}^{\mathbf{a}}-\gamma_{1,0}^{\mathbf{a}}),$$
where the last equality is obtained by following the movie \cref{eq:moviephi0alpha} for $\phi_0$, and using the simplified version of the \emph{zip} map given in \cref{eq:zipelementary}.
But the $\gamma_{i,j}^{\mathbf{a}}$ are a d.u.r.\ basis of $\mathscr{S}_1(\raisebox{-0.8\baselineskip}{\def\svgwidth{20pt}})$ over $\Z$, so $b_{\mathbf{a}} \in \Z$ for all $\mathbf{a}$, which contradicts our assumption. Therefore $\raisebox{-0.8\baselineskip}{\def\svgwidth{10pt}}=\psi(\gamma_{0,1}+\gamma_{1,0})$ is indeed a basis of $\mathscr{S}_1(\raisebox{-0.8\baselineskip}{\def\svgwidth{10pt}})$ over $\Z$, and the images under $\psi$ of $\gamma_{0,0},\, \gamma_{0,1}$ and $\gamma_{1,1}-\sum_{\mathbf{a}} b_{\mathbf{a}} (\gamma_{0,1}+\gamma_{1,0})$ give the d.u.r.\ basis of $\mathscr{S}_1(\raisebox{-0.8\baselineskip}{\def\svgwidth{13pt}})$.

If, conversely, $\raisebox{-0.8\baselineskip}{\def\svgwidth{13pt}}$ and $\raisebox{-0.8\baselineskip}{\def\svgwidth{10pt}}$ are d.u.r., 
the images of their d.u.r.\ bases under $\phi_0$ and $\phi_1$ are:
\begin{equation} \label{eq:imagebasisid17rereverse}
{\small \raisebox{-6.1\baselineskip}{\def\svgwidth{200pt}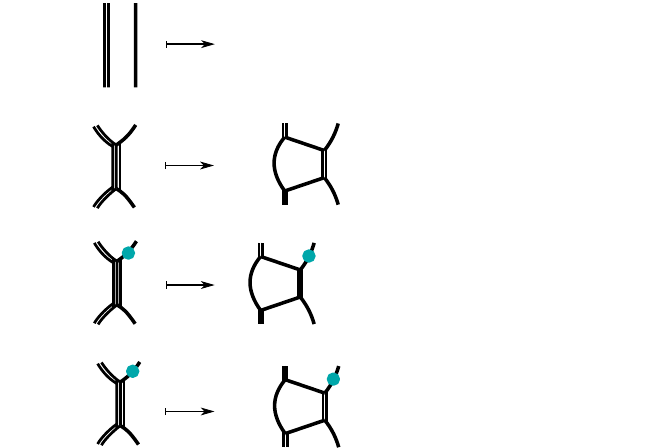}}
\end{equation}

By similar arguments to the previous cases, we can express $\gamma_{0,2}$ in terms of the d.u.r.\ set, by noting that $\gamma_{0,2}-\gamma_{1,1} = \phi_0(\quad\raisebox{-0.8\baselineskip}{\def\svgwidth{23pt}})$, and expressing $\quad \raisebox{-0.8\baselineskip}{\def\svgwidth{23pt}}$ in terms of the d.u.r.\ basis of $\raisebox{-0.8\baselineskip}{\def\svgwidth{10pt}}$. Then one easily finds a change of basis turning the basis of \cref{eq:imagebasisid17rereverse} into the d.u.r.\ set.

\end{proof}

\begin{lem} \label{lem:treeisopreservesdur}
When $a=b=c=1$, the LHS of the \textit{tree isomorphism} \cref{eq:treeiso} is d.u.r.\ if and only if the RHS is.
\end{lem}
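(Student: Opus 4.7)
The tree isomorphism of \cref{eq:treeiso} provides an explicit $R$-module isomorphism $T\colon \mathscr{S}_1(\text{LHS}) \to \mathscr{S}_1(\text{RHS})$ which, by construction, acts as the identity on the decorations carried by the outer edges $a$, $b$, $c$. My plan is to compute the matrix of $T$ with respect to the two candidate d.u.r.\ sets, show that it is unimodular, and conclude via standard linear algebra, in the spirit of the proof of \cref{lem:id17preservesdur}.

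First I would enumerate the two d.u.r.\ sets explicitly. At $a=b=c=1$ both have six elements: denote by $\gamma^L_{p,q}$ the LHS element with decoration $x^p$ on the $c$-edge (upper-right of the first LHS split vertex, $p \in \{0,1,2\}$) and $x^q$ on the $b$-edge (upper-right of the second, $q \in \{0,1\}$), and by $\gamma^R_{r,s}$ the RHS element with decoration $e_r(x_1,x_2)$ on the $(b+c)$-edge ($r \in \{0,1,2\}$) and $x^s$ on the $c$-edge ($s \in \{0,1\}$). Since $T$ preserves outer decorations, $T(\gamma^L_{p,q})$ is simply the RHS graph decorated by $x^q$ on $b$ and $x^p$ on $c$, trivial elsewhere.

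Next I will rewrite this image in the d.u.r.\ basis of the RHS by applying dot migration (\cref{lem:dotmigration}) at the second RHS split vertex, whose upper edges $b$ and $c$ each have label $1$ and whose lower edge $b+c$ has label $2$. The one-variable specialisation yields that $1$, $e_1$, $e_2$ on $b+c$ equal respectively $1\otimes 1$, $x\otimes 1 + 1 \otimes x$, $x\otimes x$ on $(b,c)$. Combining these with the decoration $x^s$ on $c$ produces closed formulas for each $\gamma^R_{r,s}$ as a decoration on $(b,c)$ (with trivial decoration on $b+c$), and inverting these identifies each image $x^q\otimes x^p$: four of the six cases are single basis elements, while the remaining two read $T(\gamma^L_{2,0}) = \gamma^R_{1,1} - \gamma^R_{2,0}$ and $T(\gamma^L_{0,1}) = \gamma^R_{1,0} - \gamma^R_{0,1}$. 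A direct inspection of the resulting $6\times 6$ transition matrix will then show that its determinant is $\pm 1$.

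Once unimodularity is established, the iff will follow: if $\textit{DUR}(\text{LHS})$ is a $\Z$-basis then $T$ sends it to a $\Z$-basis of $\mathscr{S}_1(\text{RHS})$ generating the same $\Z$-lattice as $\textit{DUR}(\text{RHS})$, so the latter is also a basis, and the converse direction uses integrality of the inverse matrix. The main work is the explicit six-element dot-migration calculation; the only potential subtlety is that dot migration has to be applied inside the full closed vinyl graph containing the tree-iso tangle, but since the identity is local this causes no difficulty.
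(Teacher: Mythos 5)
Your proposal is correct and follows essentially the same route as the paper: both enumerate the six-element d.u.r.\ sets on each side, use the fact that the tree isomorphism preserves the decorations on the outer edges together with dot migration at the $(b,c,b{+}c)$ vertex, and read off an explicit integral, triangular (hence unimodular) change of basis. Your two non-trivial identities $T(\gamma^L_{2,0})=\gamma^R_{1,1}-\gamma^R_{2,0}$ and $T(\gamma^L_{0,1})=\gamma^R_{1,0}-\gamma^R_{0,1}$ check out against the relations $e_1\mapsto x\otimes 1+1\otimes x$ and $e_2\mapsto x\otimes x$ that the paper records in its displayed identities.
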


\begin{proof}
D.u.r.\ sets of the graphs on the LHS and RHS of \cref{eq:treeiso} are given locally by
\begin{equation*}
{\small \raisebox{-1\baselineskip}{\def\svgwidth{170pt}
\begingroup%
  \makeatletter%
  \providecommand\color[2][]{%
    \errmessage{(Inkscape) Color is used for the text in Inkscape, but the package 'color.sty' is not loaded}%
    \renewcommand\color[2][]{}%
  }%
  \providecommand\transparent[1]{%
    \errmessage{(Inkscape) Transparency is used (non-zero) for the text in Inkscape, but the package 'transparent.sty' is not loaded}%
    \renewcommand\transparent[1]{}%
  }%
  \providecommand\rotatebox[2]{#2}%
  \newcommand*\fsize{\dimexpr\f@size pt\relax}%
  \newcommand*\lineheight[1]{\fontsize{\fsize}{#1\fsize}\selectfont}%
  \ifx\svgwidth\undefined%
    \setlength{\unitlength}{229.14147574bp}%
    \ifx\svgscale\undefined%
      \relax%
    \else%
      \setlength{\unitlength}{\unitlength * \real{\svgscale}}%
    \fi%
  \else%
    \setlength{\unitlength}{\svgwidth}%
  \fi%
  \global\let\svgwidth\undefined%
  \global\let\svgscale\undefined%
  \makeatother%
  \begin{picture}(1,0.14217686)%
    \lineheight{1}%
    \setlength\tabcolsep{0pt}%
    \put(0,0){\includegraphics[width=\unitlength,page=1]{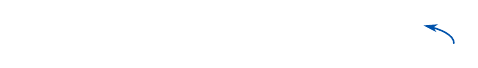}}%
    \put(0.92721217,0.00612902){\color[rgb]{0,0.32156863,0.67058824}\makebox(0,0)[lt]{\lineheight{1.25}\smash{\begin{tabular}[t]{l}$0,1$\end{tabular}}}}%
    \put(0.52226646,0.02863401){\makebox(0,0)[lt]{\lineheight{1.25}\smash{\begin{tabular}[t]{l}$,$\end{tabular}}}}%
    \put(0.67594008,0.03309918){\color[rgb]{0,0.32156863,0.67058824}\makebox(0,0)[lt]{\lineheight{1.25}\smash{\begin{tabular}[t]{l}$0,1$\end{tabular}}}}%
    \put(0,0){\includegraphics[width=\unitlength,page=2]{basisidentity13.pdf}}%
    \put(0.37169813,0.0588792){\color[rgb]{0,0.32156863,0.67058824}\makebox(0,0)[lt]{\lineheight{1.25}\smash{\begin{tabular}[t]{l}$0,1$\end{tabular}}}}%
    \put(-0.00217353,0.00773275){\color[rgb]{0,0.32156863,0.67058824}\makebox(0,0)[lt]{\lineheight{1.25}\smash{\begin{tabular}[t]{l}$e_0,e_1,e_2$\end{tabular}}}}%
    \put(0,0){\includegraphics[width=\unitlength,page=3]{basisidentity13.pdf}}%
  \end{picture}%
\endgroup%
}}
\end{equation*}

Dot migration \cref{eq:dotmigration} and the \emph{tree} isomorphism \cref{eq:treeiso} yield the following identities, which prove the statement.
\begin{equation*}
\raisebox{-1.8\baselineskip}{\def\svgwidth{330pt}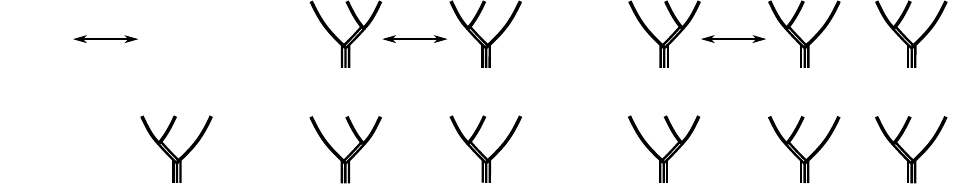}
\end{equation*}

\end{proof}

In the previous section we proved that $\Theta_{\ell}=\Gamma_{0,\ell}$ is d.u.r. The following result from \cite{BPRW} holds\footnote{It can be proved by induction on the number of strands of a graph, by using Jones' basis of the Hecke algebra.}.

\begin{prop}[\cite{BPRW}, Proposition 2.25] \label{lem:moveselementary}
Let $\Gamma$ be an elementary vinyl graph.
For any $\mathbf{u}=(\ell^1,\ldots , \ell^n)$, with $\ell^i \in \N$ $\forall i$, define $\Theta_{\mathbf{u}}=\bigsqcup_i \Theta_{\ell^i}$.
Then 
\begin{enumerate}
    \item There exist two $\N[q,q^{-1}]$-linear combinations $\sum_{i=1}^m a_i\Theta_{\mathbf{u_i}}$ and $\sum_{i=1}^{m'} a'_i\Theta_{\mathbf{u'_i}}$ such that 
    \begin{equation}
    \mathscr{S}_1(\Gamma) \oplus \bigoplus_{i=1}^m a_i\mathscr{S}_1(\Theta_{\mathbf{u_i}}) \oplus b\mathscr{S}_1(C) \simeq \bigoplus_{i=1}^{m'} a'_i\mathscr{S}_1(\Theta_{\mathbf{u'_i}}) \oplus b'\mathscr{S}_1(C')
    \end{equation}
    for some $b,b' \in \N[q,q^{-1}]$ and $C,C'$ disjoint unions of circles (the coefficients $a_i,a'_i,b,b'$ represent degree shifts).
    \item This isomorphism is achieved via a finite sequence of the moves \cref{eq:move1} and \cref{eq:move2} below.

    \begin{equation} \label{eq:move1}
    \mathscr{S}_1(\;\raisebox{-1.3\baselineskip}{\def\svgwidth{11pt}
\begingroup%
  \makeatletter%
  \providecommand\color[2][]{%
    \errmessage{(Inkscape) Color is used for the text in Inkscape, but the package 'color.sty' is not loaded}%
    \renewcommand\color[2][]{}%
  }%
  \providecommand\transparent[1]{%
    \errmessage{(Inkscape) Transparency is used (non-zero) for the text in Inkscape, but the package 'transparent.sty' is not loaded}%
    \renewcommand\transparent[1]{}%
  }%
  \providecommand\rotatebox[2]{#2}%
  \newcommand*\fsize{\dimexpr\f@size pt\relax}%
  \newcommand*\lineheight[1]{\fontsize{\fsize}{#1\fsize}\selectfont}%
  \ifx\svgwidth\undefined%
    \setlength{\unitlength}{14.37280568bp}%
    \ifx\svgscale\undefined%
      \relax%
    \else%
      \setlength{\unitlength}{\unitlength * \real{\svgscale}}%
    \fi%
  \else%
    \setlength{\unitlength}{\svgwidth}%
  \fi%
  \global\let\svgwidth\undefined%
  \global\let\svgscale\undefined%
  \makeatother%
  \begin{picture}(1,3.48439208)%
    \lineheight{1}%
    \setlength\tabcolsep{0pt}%
    \put(0,0){\includegraphics[width=\unitlength,page=1]{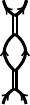}}%
  \end{picture}%
\endgroup%
}\;) \;\simeq\; [2] \mathscr{S}_1(\raisebox{-1.1\baselineskip}{\def\svgwidth{15pt}
\begingroup%
  \makeatletter%
  \providecommand\color[2][]{%
    \errmessage{(Inkscape) Color is used for the text in Inkscape, but the package 'color.sty' is not loaded}%
    \renewcommand\color[2][]{}%
  }%
  \providecommand\transparent[1]{%
    \errmessage{(Inkscape) Transparency is used (non-zero) for the text in Inkscape, but the package 'transparent.sty' is not loaded}%
    \renewcommand\transparent[1]{}%
  }%
  \providecommand\rotatebox[2]{#2}%
  \newcommand*\fsize{\dimexpr\f@size pt\relax}%
  \newcommand*\lineheight[1]{\fontsize{\fsize}{#1\fsize}\selectfont}%
  \ifx\svgwidth\undefined%
    \setlength{\unitlength}{22.06656412bp}%
    \ifx\svgscale\undefined%
      \relax%
    \else%
      \setlength{\unitlength}{\unitlength * \real{\svgscale}}%
    \fi%
  \else%
    \setlength{\unitlength}{\svgwidth}%
  \fi%
  \global\let\svgwidth\undefined%
  \global\let\svgscale\undefined%
  \makeatother%
  \begin{picture}(1,2.27114957)%
    \lineheight{1}%
    \setlength\tabcolsep{0pt}%
    \put(0,0){\includegraphics[width=\unitlength,page=1]{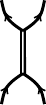}}%
  \end{picture}%
\endgroup%
})
    \end{equation}
    \begin{equation} \label{eq:move2}
    \mathscr{S}_1(\;\raisebox{-1.33\baselineskip}{\def\svgwidth{22pt}
\begingroup%
  \makeatletter%
  \providecommand\color[2][]{%
    \errmessage{(Inkscape) Color is used for the text in Inkscape, but the package 'color.sty' is not loaded}%
    \renewcommand\color[2][]{}%
  }%
  \providecommand\transparent[1]{%
    \errmessage{(Inkscape) Transparency is used (non-zero) for the text in Inkscape, but the package 'transparent.sty' is not loaded}%
    \renewcommand\transparent[1]{}%
  }%
  \providecommand\rotatebox[2]{#2}%
  \newcommand*\fsize{\dimexpr\f@size pt\relax}%
  \newcommand*\lineheight[1]{\fontsize{\fsize}{#1\fsize}\selectfont}%
  \ifx\svgwidth\undefined%
    \setlength{\unitlength}{36.43002027bp}%
    \ifx\svgscale\undefined%
      \relax%
    \else%
      \setlength{\unitlength}{\unitlength * \real{\svgscale}}%
    \fi%
  \else%
    \setlength{\unitlength}{\svgwidth}%
  \fi%
  \global\let\svgwidth\undefined%
  \global\let\svgscale\undefined%
  \makeatother%
  \begin{picture}(1,1.79697212)%
    \lineheight{1}%
    \setlength\tabcolsep{0pt}%
    \put(0,0){\includegraphics[width=\unitlength,page=1]{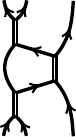}}%
  \end{picture}%
\endgroup%
}\;) \oplus \mathscr{S}_1(\;\raisebox{-1.16\baselineskip}{\def\svgwidth{24pt}
\begingroup%
  \makeatletter%
  \providecommand\color[2][]{%
    \errmessage{(Inkscape) Color is used for the text in Inkscape, but the package 'color.sty' is not loaded}%
    \renewcommand\color[2][]{}%
  }%
  \providecommand\transparent[1]{%
    \errmessage{(Inkscape) Transparency is used (non-zero) for the text in Inkscape, but the package 'transparent.sty' is not loaded}%
    \renewcommand\transparent[1]{}%
  }%
  \providecommand\rotatebox[2]{#2}%
  \newcommand*\fsize{\dimexpr\f@size pt\relax}%
  \newcommand*\lineheight[1]{\fontsize{\fsize}{#1\fsize}\selectfont}%
  \ifx\svgwidth\undefined%
    \setlength{\unitlength}{44.39299239bp}%
    \ifx\svgscale\undefined%
      \relax%
    \else%
      \setlength{\unitlength}{\unitlength * \real{\svgscale}}%
    \fi%
  \else%
    \setlength{\unitlength}{\svgwidth}%
  \fi%
  \global\let\svgwidth\undefined%
  \global\let\svgscale\undefined%
  \makeatother%
  \begin{picture}(1,1.48598566)%
    \lineheight{1}%
    \setlength\tabcolsep{0pt}%
    \put(0,0){\includegraphics[width=\unitlength,page=1]{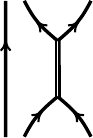}}%
  \end{picture}%
\endgroup%
}) \;\simeq\; \mathscr{S}_1(\reflectbox{\raisebox{-1.16\baselineskip}{\def\svgwidth{24pt}}}\;) \oplus \mathscr{S}_1(\;\reflectbox{\raisebox{-1.33\baselineskip}{\def\svgwidth{22pt}}}\;)
    \end{equation}
\end{enumerate}

\end{prop}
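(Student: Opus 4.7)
The plan is to induct on a complexity measure of $\Gamma$, such as the total number of split vertices (equivalently, merge vertices) of the underlying elementary graph. The base case, with zero trivalent vertices, is a disjoint union of circles and is already of the required form (the empty sum $\Theta_{\emptyset}$ suffices on the left, and the circles absorb into the $C$ term). For the inductive step, given an elementary graph $\Gamma$ with $N>0$ split vertices, the goal is to locate a local configuration to which either \cref{eq:move1} or \cref{eq:move2} applies, and then invoke the inductive hypothesis on the simpler graphs on the other side of the move.

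The strategy is to exploit the correspondence between elementary vinyl graphs of braid index $n$ and the Hecke algebra $H_n(q)$: each split-merge pair between heights $i$ and $i+1$ is an idempotent $e_i$ (up to normalisation), and closing up the open graph corresponds to the Markov trace. Under this dictionary, \cref{eq:move1} implements the bubble relation $e_i^2=[2]e_i$, and \cref{eq:move2} implements the square-switch $e_i e_{i\pm 1} e_i \sim e_i$; the commutation $e_i e_j=e_j e_i$ for $|i-j|\ge 2$ is automatic by planar isotopy. Jones' basis for $H_n$ rewrites any word in the $e_i$'s using exactly these three relations, producing a canonical form whose closure is a disjoint union of graphs in which the $i$-th strand is only connected to the $(i+1)$-th strand through a single cascading pattern — precisely the graphs $\Theta_\ell$.

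Concretely, I would first use planar isotopy and the recursive step of Jones' normal form to bring $\Gamma$ into a presentation where the top trivalent pair is either adjacent to its partner (forming a digon, so that \cref{eq:move1} applies) or sits above another pair whose generator is $e_{i\pm 1}$ (forming a square face, so that \cref{eq:move2} applies). In the first case, an application of \cref{eq:move1} replaces $\mathscr{S}_1(\Gamma)$ by $[2]$ copies of the state space of a graph with two fewer split vertices, and the inductive hypothesis finishes the job. In the second case, \cref{eq:move2} exchanges $\mathscr{S}_1(\Gamma)$ up to summands of the same complexity for state spaces of graphs on which the digon configuration has been exposed — so that the next step can apply \cref{eq:move1}. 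Iterating yields the desired decomposition, and by construction the entire reduction is a finite sequence of the two moves, establishing both parts of the statement simultaneously.

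The main obstacle is the bookkeeping required to ensure termination: a naive application of \cref{eq:move2} does not lower the number of split vertices, and in isolation could produce a loop. The essential content of Jones' basis is precisely that it equips words in the $e_i$'s with a well-founded ordering (built on successive Hecke-algebra cosets $H_{n-1}\subset H_n$) with respect to which the combination "one \cref{eq:move2} followed by one \cref{eq:move1}" strictly decreases complexity. Transporting this ordering across the graph/algebra dictionary, and checking that every intermediate graph remains elementary (i.e.\ that no label $\geq 3$ is introduced along the way), is the delicate point where the Hecke-algebraic input is genuinely used rather than being cosmetic.
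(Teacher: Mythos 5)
The paper does not actually prove this proposition: it is imported wholesale from [BPRW, Proposition 2.25], with only a footnote indicating the strategy (``induction on the number of strands of a graph, by using Jones' basis of the Hecke algebra''). Your sketch is in exactly that spirit, so at the level of strategy you and the paper agree. Two points, however, deserve attention.

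First, the induction you set up --- on the number of split vertices --- does not terminate on its own, as you yourself note: move \cref{eq:move2} preserves that count, and you resolve the circularity only by gesturing at ``a well-founded ordering'' coming from Jones' basis. The standard way to make this precise, and the one the footnote points to, is to induct on the \emph{braid index} (number of strands) rather than on the number of vertices: Jones' basis gives the recursive decomposition $H_n = H_{n-1} + H_{n-1}b_{n-1}H_{n-1}$, so one rewrites any word in the generators $b_1,\dots,b_{n-1}$ as a combination of words containing $b_{n-1}$ at most once, pushes the remaining letters into $H_{n-1}$, and closes up; the graphs $\Theta_{\mathbf u}$ are precisely the closures of the resulting normal forms. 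With that induction the termination issue disappears, and your ``one \cref{eq:move2} followed by one \cref{eq:move1}'' heuristic becomes a consequence rather than an assumption. As written, your argument has a genuine gap at this point: the well-founded ordering is asserted, not exhibited.

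Second, a smaller imprecision: you describe move \cref{eq:move2} as the Temperley--Lieb square-switch $e_ie_{i\pm1}e_i\sim e_i$. What \cref{eq:move2} actually categorifies is the Hecke-algebra identity $b_ib_{i+1}b_i + b_{i+1} = b_{i+1}b_ib_{i+1} + b_i$ --- a two-term direct-sum isomorphism on each side, not a rewriting of a single word. This matters for the bookkeeping, since applying \cref{eq:move2} produces extra summands that must themselves be carried through the induction (they account for the $\Theta_{\mathbf{u_i}}$ terms on the left-hand side of the statement). Your concern about intermediate graphs acquiring labels $\geq 3$ is, on the other hand, vacuous here: both moves involve only $1$- and $2$-labelled edges, so elementarity is preserved automatically.
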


It follows from \cref{lem:moveselementary} that in order to prove that elementary graphs are d.u.r., it is sufficient to show that the two moves \cref{eq:move1} and \cref{eq:move2} preserve the d.u.r.\ property. We are ready to prove \cref{thm:d.u.r.}.

\begin{proof}[Proof of \cref{thm:d.u.r.}]
We showed in \cref{prop:gammaklisdur} that the vinyl graph $\Theta_{\ell}$ is d.u.r. By \cref{lem:moveselementary} the only thing left to do is prove that the isomorphisms of move \cref{eq:move1} and move \cref{eq:move2} map d.u.r.\ bases to d.u.r.\ bases. For the first move, this follows directly from \cref{lem:un-bubblepreservesdur}. As for the second move, isomorphism \cref{eq:move2} is obtained by composing identities \cref{eq:id17general}, \cref{eq:treeiso} and the mirror of \cref{eq:id17general}. It is therefore enough to show that the property of having a d.u.r.\ basis is preserved by these two identities (and their mirrors). This is the object of \cref{lem:id17preservesdur} and \cref{lem:treeisopreservesdur}. 

\end{proof}

\section{Computing the homology}
\label{sec:comp-gll_1-homol}

In this section we describe an algorithm and a program which computes the non-equivariant symmetric
$\mathfrak{gl}_1$-homology of uncolored links. The program is made public in \cite{foam-gl1}.
First, let us give an outline of the definition of the symmetric $\mathfrak{gl}_1$-homology. 

\subsection{The symmetric $\mathfrak{gl}_1$-homology} \label{sec:homology}

In \cref{sec:gl1eval} we reviewed the construction of the $R$-modules $\mathscr{S}_1(\Gamma)$ associated to vinyl graphs $\Gamma$, for $R=\Q,\,\Z$ or $\mathbb{F}_p$, with $p$ prime. These modules are a key component in the definition of the symmetric $\mathfrak{gl}_1$-homology of links. We sketch the construction of this homological invariant here for uncolored links.  More details can be found in \cite{zbMATH07206869,zbMATH07632766}. 

Start with a braid diagram $\beta$ whose closure $\widehat{\beta}$ is a representative of an uncolored link $L$. If $\beta$ has $n$ crossings $x_1,\ldots, x_n$, let $n_+$ (resp.\ $n_-$) be the number of positive (resp.\ negative) ones. For each crossing, consider the two resolutions below:

\begin{equation*}
{\scriptsize \raisebox{-0\baselineskip}{\def\svgwidth{180pt}
\begingroup%
  \makeatletter%
  \providecommand\color[2][]{%
    \errmessage{(Inkscape) Color is used for the text in Inkscape, but the package 'color.sty' is not loaded}%
    \renewcommand\color[2][]{}%
  }%
  \providecommand\transparent[1]{%
    \errmessage{(Inkscape) Transparency is used (non-zero) for the text in Inkscape, but the package 'transparent.sty' is not loaded}%
    \renewcommand\transparent[1]{}%
  }%
  \providecommand\rotatebox[2]{#2}%
  \newcommand*\fsize{\dimexpr\f@size pt\relax}%
  \newcommand*\lineheight[1]{\fontsize{\fsize}{#1\fsize}\selectfont}%
  \ifx\svgwidth\undefined%
    \setlength{\unitlength}{607.4238756bp}%
    \ifx\svgscale\undefined%
      \relax%
    \else%
      \setlength{\unitlength}{\unitlength * \real{\svgscale}}%
    \fi%
  \else%
    \setlength{\unitlength}{\svgwidth}%
  \fi%
  \global\let\svgwidth\undefined%
  \global\let\svgscale\undefined%
  \makeatother%
  \begin{picture}(1,0.43646625)%
    \lineheight{1}%
    \setlength\tabcolsep{0pt}%
    \put(0,0){\includegraphics[width=\unitlength,page=1]{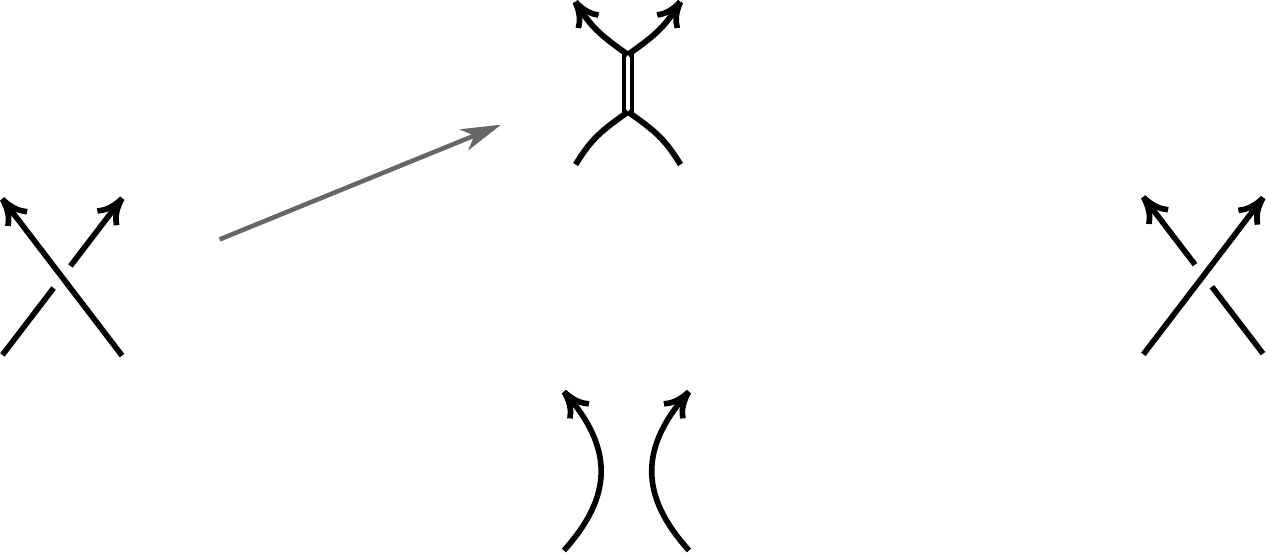}}%
    \put(0.16773049,0.2798184){\color[rgb]{0.4,0.4,0.4}\rotatebox{22.445115}{\makebox(0,0)[lt]{\lineheight{1.25}\smash{\begin{tabular}[t]{l}0-resolution\end{tabular}}}}}%
    \put(0.15806163,0.13594517){\color[rgb]{0.4,0.4,0.4}\rotatebox{-22.408164}{\makebox(0,0)[lt]{\lineheight{1.25}\smash{\begin{tabular}[t]{l}1-resolution\end{tabular}}}}}%
    \put(0,0){\includegraphics[width=\unitlength,page=2]{resolutions.pdf}}%
    \put(0.63054925,0.0468867){\color[rgb]{0.4,0.4,0.4}\rotatebox{22.445115}{\makebox(0,0)[lt]{\lineheight{1.25}\smash{\begin{tabular}[t]{l}0-resolution\end{tabular}}}}}%
    \put(0,0){\includegraphics[width=\unitlength,page=3]{resolutions.pdf}}%
    \put(0.62232331,0.36273587){\color[rgb]{0.4,0.4,0.4}\rotatebox{-22.408164}{\makebox(0,0)[lt]{\lineheight{1.25}\smash{\begin{tabular}[t]{l}1-resolution\end{tabular}}}}}%
  \end{picture}%
\endgroup%
} }   
\end{equation*}

Let $v=(v_1,\ldots,v_n) \in \{0,1\}^{n}$ and $|v|=\displaystyle\sum_i v_i$. Replacing each crossing $x_i$ of $\widehat{\beta}$ with its $v_i$-resolution produces an elementary vinyl graph $\widehat{\beta_v}$, called a \emph{complete resolution} of $\widehat{\beta}$. One constructs a \emph{hypercube of resolutions} whose vertices are the state spaces $\widetilde{\mathscr{S}}_1(\widehat{\beta_v}) = q^{2n_- - n_+ -|v|}\mathscr{S}_1(\widehat{\beta_v})$, and whose edges are instances of the \emph{zip} \cref{eq:zipelementary} or the \emph{unzip} \cref{eq:unzip} maps. More precisely, if $v,v'$ differ only on one entry $v_i \ne v_i'$, and $|v'| = |v| + 1$, we define the $R$-module morphism $d_{v,v'}\thinspace\colon \widetilde{\mathscr{S}}_1(\widehat{\beta_v})) \to \widetilde{\mathscr{S}}_1(\widehat{\beta_v}))$ as
\[
\begin{split}
d_{v,v'} = \textit{zip } \qquad \text{ if $x_i$ is positive,} \\
d_{v,v'} = \textit{unzip } \qquad \text{ if $x_i$ is negative.}
\end{split}
\]
Flattening the hypercube of resolutions gives rise to a chain complex of graded $R$-modules: the chain spaces and differentials are
\[
\bigoplus_{|v|=i+n_-} \widetilde{\mathscr{S}}_1(\widehat{\beta_v})) = C_{i}(\widehat{\beta})), \qquad \qquad \sum_{\substack{|v|=i+n_-, \\ |v'|=i+1+n_-}} d_{v,v'}\; = \; d_i \; \colon \; C_{i}(\widehat{\beta})) \to C_{i+1}(\widehat{\beta}))
\]
When $R=\Q$ this complex is a link invariant up to chain homotopy equivalence. Its homology is thus a link invariant, the (non-equivariant) \emph{symmetric $\mathfrak{gl}_1$-homology} of $L$. For $R=\Z$ or $\mathbb{F}_p$, however, as we will see in \cref{ex:notinvariant}, the chain complex is not invariant under the Reidemeister I move, and is thus only a framed link invariant.

\subsection{The program {\tt foam-gl1.py}}

The goal of the rest of this section is to give details on how our computer program is constructed and on the algorithm that underlies it. The program {\tt foam-gl1.py} is written in python3 and uses the
libraries NumPy \cite{harris2020array} and PyTorch \cite{torch}. It takes as input a braid representative $\beta$ of an uncolored link $L$, expressed either as a sequence of integers or as a word made up of uppercase (positive crossings) and lowercase (negative crossings) letters. A typical use is:
\[
  \text{\tt> python3 foam-gl1.py AbAb}
\]
In this line, \texttt{AbAb} represents the braid
$\sigma_1\sigma_2^{-1}\sigma_1\sigma_2^{-1}$ and the program computes
the (non-equivariant) symmetric $\mathfrak{gl}_1$-homology of its closure, that is the figure-eight knot. 

The main task of the program is to compute the differentials of the chain complex $C_*(\widehat{\beta}))$ and their ranks. 
Due to the nature of $\mathscr{S}_1(\cdot)$ (see \cref{def:statespaces}), the first step is to compute the evaluation, and therefore the induced pairing, of elementary decorated graphs (i.e.\ decorated graphs whose underlying vinyl graphs are elementary).

\subsection{Computing the evaluation}
\label{sec:computing-evaluation}

Let $\Gamma^d$ be an elementary decorated graph of level $k$ which is a complete resolution of $\widehat{\beta}$. The 1-evaluation $\langle \Gamma^d \rangle_1$ is obtained by setting all variables of the $\infty$-evaluation to zero (see \cref{def:evaldecoratedgraphs}). \cref{rem:degreemaxdecoration} allows us to significantly reduce the number of calculations we need to carry out: indeed, it follows from \cref{eq:degreeeval1} that we only need to compute the evaluation for decorated graphs of degree $t_\Gamma$ (as $\langle \Gamma^d \rangle_1$ is zero otherwise). Moreover, for such decorated graphs the $\infty$-evaluation is already a degree $0$ polynomial, and is therefore equal to the 1-evaluation.

In our program, $\Gamma^d_{\text{open}}$ is viewed as a composition of pieces of two kinds: \emph{monomial slices} are $k$ vertical 1-labelled strands with decorations by homogeneous symmetric polynomials (since the strands all have label 1, the decorations are all powers of $x$), and \emph{dumbbells} are $k-2$ vertical 1-labelled strands with a $\raisebox{-0.7\baselineskip}{\def\svgwidth{11pt}
\begingroup%
  \makeatletter%
  \providecommand\color[2][]{%
    \errmessage{(Inkscape) Color is used for the text in Inkscape, but the package 'color.sty' is not loaded}%
    \renewcommand\color[2][]{}%
  }%
  \providecommand\transparent[1]{%
    \errmessage{(Inkscape) Transparency is used (non-zero) for the text in Inkscape, but the package 'transparent.sty' is not loaded}%
    \renewcommand\transparent[1]{}%
  }%
  \providecommand\rotatebox[2]{#2}%
  \newcommand*\fsize{\dimexpr\f@size pt\relax}%
  \newcommand*\lineheight[1]{\fontsize{\fsize}{#1\fsize}\selectfont}%
  \ifx\svgwidth\undefined%
    \setlength{\unitlength}{19.27925477bp}%
    \ifx\svgscale\undefined%
      \relax%
    \else%
      \setlength{\unitlength}{\unitlength * \real{\svgscale}}%
    \fi%
  \else%
    \setlength{\unitlength}{\svgwidth}%
  \fi%
  \global\let\svgwidth\undefined%
  \global\let\svgscale\undefined%
  \makeatother%
  \begin{picture}(1,1.83831597)%
    \lineheight{1}%
    \setlength\tabcolsep{0pt}%
    \put(0,0){\includegraphics[width=\unitlength,page=1]{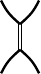}}%
  \end{picture}%
\endgroup%
}$ in a specified position. The monomial slice of \cref{eq:mondumb} is encoded by the list $[3,0,0,1,2,0,0]$, while the dumbbell is represented by the integer $4$.

\begin{equation} \label{eq:mondumb}
\raisebox{-1.1\baselineskip}{\def\svgwidth{300pt}
\begingroup%
  \makeatletter%
  \providecommand\color[2][]{%
    \errmessage{(Inkscape) Color is used for the text in Inkscape, but the package 'color.sty' is not loaded}%
    \renewcommand\color[2][]{}%
  }%
  \providecommand\transparent[1]{%
    \errmessage{(Inkscape) Transparency is used (non-zero) for the text in Inkscape, but the package 'transparent.sty' is not loaded}%
    \renewcommand\transparent[1]{}%
  }%
  \providecommand\rotatebox[2]{#2}%
  \newcommand*\fsize{\dimexpr\f@size pt\relax}%
  \newcommand*\lineheight[1]{\fontsize{\fsize}{#1\fsize}\selectfont}%
  \ifx\svgwidth\undefined%
    \setlength{\unitlength}{264.4613716bp}%
    \ifx\svgscale\undefined%
      \relax%
    \else%
      \setlength{\unitlength}{\unitlength * \real{\svgscale}}%
    \fi%
  \else%
    \setlength{\unitlength}{\svgwidth}%
  \fi%
  \global\let\svgwidth\undefined%
  \global\let\svgscale\undefined%
  \makeatother%
  \begin{picture}(1,0.0995188)%
    \lineheight{1}%
    \setlength\tabcolsep{0pt}%
    \put(0,0){\includegraphics[width=\unitlength,page=1]{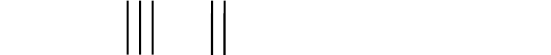}}%
    \put(-0.0012555,0.04403753){\color[rgb]{0,0,0}\makebox(0,0)[lt]{\lineheight{1.25}\smash{\begin{tabular}[t]{l}$\text{a dumbbell:}$\end{tabular}}}}%
    \put(0,0){\includegraphics[width=\unitlength,page=2]{mondumb.pdf}}%
    \put(0.52104879,0.04403744){\color[rgb]{0,0,0}\makebox(0,0)[lt]{\lineheight{1.25}\smash{\begin{tabular}[t]{l}$\text{a monomial slice:}$\end{tabular}}}}%
    \put(0,0){\includegraphics[width=\unitlength,page=3]{mondumb.pdf}}%
    \put(0.79118637,0.07570485){\color[rgb]{0.96470588,0.4,0.36470588}\makebox(0,0)[lt]{\lineheight{1.25}\smash{\begin{tabular}[t]{l}$3$\end{tabular}}}}%
    \put(0.91787969,0.046914){\color[rgb]{0.96470588,0.4,0.36470588}\makebox(0,0)[lt]{\lineheight{1.25}\smash{\begin{tabular}[t]{l}$1$\end{tabular}}}}%
    \put(0,0){\includegraphics[width=\unitlength,page=4]{mondumb.pdf}}%
    \put(0.94618111,0.08228153){\color[rgb]{0.96470588,0.4,0.36470588}\makebox(0,0)[lt]{\lineheight{1.25}\smash{\begin{tabular}[t]{l}$2$\end{tabular}}}}%
  \end{picture}%
\endgroup%
}
\end{equation}

These pieces are stacked vertically, by alternating a monomial slice and a dumbbell, so that the first and last pieces are monomial slices.

For each piece $\Psi$, the program (in the functions {\tt matrix\_dumbbell}, {\tt matrix\_monomial} and {\tt my\_matrix\_monomial}) computes a \emph{coloring matrix}: the rows (resp. columns) of this matrix are indexed by all possible colorings (\cref{def:coloring}) of the bottom (resp. top) strands of $\Psi$ (i.e.\ all permutations of $k$ elements), and the $(i,j)$-th entry encodes $\langle \Psi, c \rangle_{\infty}$ (see \cref{def:evaldecoratedgraphs}), where $c$ is the coloring of $\Psi$ determined by $i$ and $j$. 

Coloring matrices behave well with respect to the splitting of $\Gamma^d_{\text{open}}$: the coloring matrix of $\Gamma^d_{\text{open}}$ is just the composition (on the left as one stacks up) of the matrices of its pieces. Finally, the evaluation $\langle \Gamma^d \rangle_1$ is obtained by taking the trace of the coloring matrix of $\Gamma^d_{\text{open}}$. This is because the admissible colorings of $\Gamma^d$ (the closure of $\Gamma^d_{\text{open}}$) correspond to the colorings of $\Gamma^d_{\text{open}}$ where the pigments on the bottom and those on the top strands coincide. This is what the functions {\tt mat\_evaluation} and {\tt evaluation} compute. The function {\tt pairing} then determines the pairing $(\cdot\,,\cdot)_1$ of two decorated graphs.

\subsection{Computing the differentials}

In \cref{thm:d.u.r.} we showed that the d.u.r.\ set is a basis of the $R$-module $\mathscr{S}_1(\Gamma)$ for all elementary vinyl graphs $\Gamma$. Since the resolutions $\widehat{\beta}_v$ are elementary graphs ($L$ is uncolored), this implies that the d.u.r.\ set is a basis for the chain spaces $C_{i}(\widehat{\beta})$. One can thus express the differentials as matrices in these bases. 
The main issue one has to face when computing these matrices is how to express the images of the \emph{zip} and \emph{unzip} maps as linear combinations of the basis elements. Given $\mathbf{y}=\{y_1,\ldots,y_n\}$ and $\mathbf{y'}=\{y'_1,\ldots,y'_n\}$, let $(\mathbf{y},\mathbf{y'})_1$ be the matrix having as $(i,j)$-th entry the pairing $( y_i,y'_j )_1$ of \cref{def:pairing}. We call it the \emph{pairing matrix} of $\mathbf{y},\mathbf{y'}$. The following trick comes in handy:

\begin{lem} \label{lem:diffpairing}
Let $\widehat{\beta}_v,\widehat{\beta}_{v'}$ be two complete resolutions related by an edge $\widetilde{\mathscr{S}}_1(\widehat{\beta}_v) \overset{d_{v,v'}}{\longrightarrow} \widetilde{\mathscr{S}}_1(\widehat{\beta}_{v'})$ of the hypercube of resolutions and let $\mathbf{b}=\{b_1,\ldots,b_n\}$ be a basis of $\widetilde{\mathscr{S}}_1(\widehat{\beta}_{v'})$. Consider $u \in \widetilde{\mathscr{S}}_1(\widehat{\beta}_v)$ and let $d_{v,v'}(u)=\displaystyle\sum_i a_ib_i$, with $a_i \in R$, be the decomposition of $d_{v,v'}(u)$ as a linear combination of basis elements. We have:
\begin{equation*}
\displaystyle\sum_i a_ib_i = ( \mathbf{b},\mathbf{b} )_1^{-1} \cdot (d_{v,v'}(u),\mathbf{b})_1,
\end{equation*}
\end{lem}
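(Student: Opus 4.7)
The plan is to prove the formula by pairing both sides of the equation $d_{v,v'}(u) = \sum_i a_i b_i$ against each basis element and inverting the resulting linear system. First, I would establish two key properties of the pairing $(\cdot\,,\cdot)_1$ on $\mathscr{S}_1(\Gamma)$: symmetry, which is immediate from the fact that $\mathscr{D}(\Gamma)$ is a commutative $R$-algebra (so $\Gamma^{d\cdot d'}=\Gamma^{d'\cdot d}$), and non-degeneracy, which is built into \cref{def:statespaces} since $\mathscr{S}_1(\Gamma)$ is obtained by quotienting $\mathscr{D}(\Gamma)$ by precisely the kernel of the pairing.

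Next, pairing both sides of $d_{v,v'}(u) = \sum_i a_i b_i$ with each $b_j$, $j=1,\ldots,n$, produces the linear system
\[
(d_{v,v'}(u), b_j)_1 \;=\; \sum_i a_i \,(b_i, b_j)_1 \qquad (j = 1, \ldots, n).
\]
Writing $\mathbf{a} = (a_1, \ldots, a_n)^T$ for the column vector of coefficients (which is the correct interpretation of the symbol ``$\sum_i a_i b_i$'' appearing in the statement), and using the symmetry of the pairing to dispense with transposes, this rewrites as the matrix equation
\[
(d_{v,v'}(u), \mathbf{b})_1 \;=\; (\mathbf{b}, \mathbf{b})_1 \cdot \mathbf{a}.
\]

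Finally, I would invert the Gram matrix $(\mathbf{b}, \mathbf{b})_1$ to isolate $\mathbf{a}$, yielding the stated formula. The only point that requires care is the invertibility of this Gram matrix: since $\mathbf{b}$ is a basis of the free $R$-module $\widetilde{\mathscr{S}}_1(\widehat{\beta}_{v'})$, any non-trivial linear combination of the $b_i$'s pairing to zero with every $b_j$ would pair to zero with all of $\widetilde{\mathscr{S}}_1(\widehat{\beta}_{v'})$, contradicting non-degeneracy; hence $(\mathbf{b}, \mathbf{b})_1$ is invertible over the fraction field of $R$. When $R = \Q$ or $\mathbb{F}_p$ this is all that is needed, and when $R = \Z$ one performs the inversion in $\Q$ and observes a posteriori that the resulting coefficient vector lies in $\Z^n$ because $d_{v,v'}(u)$ admits an integer decomposition in the basis $\mathbf{b}$. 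Apart from this minor base-ring bookkeeping, there is no genuine obstacle.
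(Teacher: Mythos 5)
Your proposal is correct and follows essentially the same route as the paper: expand $(d_{v,v'}(u),\mathbf{b})_1$ by bilinearity into $(\mathbf{b},\mathbf{b})_1\cdot\mathbf{a}$ and invert the Gram matrix. The only difference is that you explicitly justify the invertibility of $(\mathbf{b},\mathbf{b})_1$ via non-degeneracy of the pairing (and handle the $R=\Z$ case by inverting over $\Q$), a point the paper's proof takes for granted.
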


\begin{proof}
The vector $(d_{v,v'}(u),\mathbf{b})_1$ is given by 
\[
(d_{v,v'}(u),\mathbf{b})_1 = (\displaystyle\sum_i a_ib_i,\mathbf{b})_1 = a_1 
\left( \begin{array}{c}
     (b_1,b_1)_1 \\
     \vdots \\
     (b_1,b_n)_1
\end{array} \right) + \ldots + a_n 
\left( \begin{array}{c}
     (b_n,b_1)_1 \\
     \vdots \\
     (b_n,b_n)_1
\end{array} \right).
\]
Since $\left( \begin{array}{c}
     (b_i,b_1)_1 \\
     \vdots \\
     (b_i,b_n)_1
\end{array} \right)$ is the $i$-th column of the matrix $( \mathbf{b},\mathbf{b} )_1$, it is clear that 
\[
( \mathbf{b},\mathbf{b} )_1^{-1} \cdot (d_{v,v'}(u),\mathbf{b})_1=\left( \begin{array}{c}
     a_1 \\
     \vdots \\
     a_n
\end{array} \right), 
\]
which proves the statement.
\end{proof}

Using \cref{lem:diffpairing}, computing the differentials boils down to computing pairing matrices.

Of the \emph{zip} and \emph{unzip} maps constituting the differentials $d_{v,v'}$, the latter is easier to work with, as it does not increase the number of decorations. The following statement shows that the pairing matrix corresponding to a \emph{zip} map is simply the transpose of that of an \emph{unzip}, with opposite signs.

\begin{lem} \label{lem:positivecx}
Let $v,v' \in \{0,1\}^n$ be equal on all entries but the $k$-th one, where 
\[
\begin{split}
v_k=1 \quad \text{and} \quad v'_k=0 \qquad \text{if } x_k \text{ is a positive crossing of } \beta  \\
v_k=0 \quad \text{and} \quad v'_k=1 \qquad \text{if } x_k \text{ is a negative crossing of } \beta 
\end{split}
\]
The state spaces $\widetilde{\mathscr{S}}_1(\widehat{\beta}_v)$ and $\widetilde{\mathscr{S}}_1(\widehat{\beta}_{v'})$ are thus related by an edge of the hypercube of resolutions. given by:
\[
\begin{split}
\widetilde{\mathscr{S}}_1(\widehat{\beta}_{v'}) \overset{\textit{zip}}{\to} \widetilde{\mathscr{S}}_1(\widehat{\beta}_{v}) \qquad \text{if } x_k \text{ is positive}  \\
\widetilde{\mathscr{S}}_1(\widehat{\beta}_{v}) \overset{\textit{unzip}}{\to} \widetilde{\mathscr{S}}_1(\widehat{\beta}_{v'}) \qquad \text{if } x_k \text{ is negative}
\end{split}
\]
Consider the d.u.r.\ bases $\mathbf{b}$ and $\mathbf{b'}$ of $\widetilde{\mathscr{S}}_1(\widehat{\beta}_v)$ and $\widetilde{\mathscr{S}}_1(\widehat{\beta}_{v'})$ respectively. 
The pairing matrices $(\textit{unzip}(\mathbf{b}),\mathbf{b'})_1$ and $-(\textit{zip}(\mathbf{b'}),\mathbf{b})_1$ are the transpose of one another.
\end{lem}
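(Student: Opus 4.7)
\medskip

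The plan is to establish the entry-wise identity
\[
(\textit{unzip}(b_i), b'_j)_1 \;=\; -\,(\textit{zip}(b'_j), b_i)_1
\]
for every pair of d.u.r.\ basis elements $b_i \in \mathbf{b}$ and $b'_j \in \mathbf{b'}$. By bilinearity and the local nature of the zip and unzip maps (both act as the identity outside a small tangle containing the $k$-th crossing resolution), each side unfolds, via \cref{def:pairing}, into a $1$-evaluation of a closed decorated graph that differs from the other only in a neighborhood $T$ of that crossing. Outside $T$ the two decorated graphs are literally identical, inheriting the decorations of $b_i$ and $b'_j$.

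The first step is to use the explicit local formulas to rewrite both sides. On the one hand, \cref{eq:unzip} says that $\textit{unzip}(b_i)$ replaces the dumbbell of $\widehat{\beta}_v$ inside $T$ by two parallel strands while preserving every decoration; pairing it against $b'_j$ on the outside therefore produces a single closed decorated graph on $\widehat{\beta}_{v'}$. On the other hand, the elementary zip formula \cref{eq:zipelementary} expresses $\textit{zip}(b'_j)$ inside $T$ as a difference of two dumbbell decorated graphs, with a single bullet on the upper-right strand minus a single bullet on the upper-left strand; pairing with $b_i$ gives the corresponding difference of two closed graphs on $\widehat{\beta}_v$.

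The heart of the proof is then a \emph{local adjunction identity}: for any fixed coloring $c_\text{out}$ of the edges outside $T$ and any d.u.r.\ decoration on the upper-right edge of the dumbbell in $b_i$, the contributions to the respective $1$-evaluations satisfy
\[
\sum_{c \text{ extending } c_\text{out}} \langle \textit{unzip}(b_i)\cdot b'_j, c\rangle_\infty \;=\; -\sum_{c \text{ extending } c_\text{out}} \langle \textit{zip}(b'_j)\cdot b_i, c\rangle_\infty
\]
where only the summand in $x_1,x_2$ in $T$ differs and the decorations outside $T$ factor out identically. On the dumbbell side the local split vertex introduces a denominator $(x_a-x_b)$ from $Q(\Gamma,c)$ (\cref{def:evaldecoratedgraphs}) and the zip formula contributes a numerator $x_a - x_b$ (as $\bullet^1_{\text{UR}} - \bullet^1_{\text{UL}}$); on the parallel-strand side there is no split vertex and no extra factor. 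Carefully accounting for how the d.u.r.\ decorations on the upper-right edge of the dumbbell are reabsorbed onto the right strand after unzipping, and for the minus sign carried by the zip formula, produces exactly the opposite sign of what appears on the other side, via the top-degree specialization argument of \cref{rem:degreemaxdecoration} that lets us work with $\infty$-evaluations rather than $1$-evaluations throughout.

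The main obstacle, and the only place where honest computation is needed, is this final bookkeeping: matching colorings of the two closed graphs and tracking how the denominator $(x_a - x_b)$ from the dumbbell's split vertex cancels against the difference of local decorations produced by the zip. The verification itself is a finite check (four d.u.r.\ cases, since each of $b_i$ and $b'_j$ has at most two local d.u.r.\ patterns), and the sign $-1$ in the statement is precisely the one recorded by the minus in \cref{eq:zipelementary}.
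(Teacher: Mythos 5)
Your strategy coincides with the paper's: reduce to the entry-wise identity $(\textit{unzip}(b_i),b'_j)_1=-(\textit{zip}(b'_j),b_i)_1$, observe that both sides are $1$-evaluations of closed decorated graphs that agree outside a small tangle around the $k$-th crossing, and check locally that the split-vertex factor of $Q(\Gamma,c)$ cancels against the difference of decorations produced by the zip, which is where the sign $-1$ comes from. This is exactly the identity $\langle\,\text{parallel strands}\,\rangle_\infty=-\langle\,\text{dumbbell with the zip difference of bullets}\,\rangle_\infty$ that the paper verifies coloring by coloring.

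Two points in the "final bookkeeping" you defer deserve attention. First, the check is not a matter of "four d.u.r.\ cases": the local identity has to hold with \emph{arbitrary} fixed decorations outside the tangle (the $P/Q$ of the paper's proof), and the d.u.r.\ hypothesis on $\mathbf{b},\mathbf{b'}$ plays no role in the argument. Second, and more substantively, your term-by-term cancellation does not by itself account for all colorings of the dumbbell resolution: the merge--split admits colorings with no counterpart on the parallel-strand side, namely those in which the two pigments $i,j$ swap as they pass through the $2$-labelled edge (the colorings $c''$ and $c'''$ of \cref{eq:colorings}). For these the two bullets of the zip difference land on strands carrying the \emph{same} pigment, so the local numerator is $x_i-x_i=0$ and the contribution vanishes — an easy but necessary separate computation, without which the sums over colorings on the two sides would not match. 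With that vanishing supplied, your argument closes and agrees with the paper's.
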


\begin{proof}
It is enough\footnote{Viewing the maps \emph{zip} and \emph{unzip} as foams, this statement is trivial: the \emph{zip} foam is the \emph{unzip} foam flipped upside-down, and one clearly sees that the two pairing matrices compute the evaluations of the same foams} to show that
\[
\langle \; \raisebox{-0.7\baselineskip}{\def\svgwidth{11pt}
\begingroup%
  \makeatletter%
  \providecommand\color[2][]{%
    \errmessage{(Inkscape) Color is used for the text in Inkscape, but the package 'color.sty' is not loaded}%
    \renewcommand\color[2][]{}%
  }%
  \providecommand\transparent[1]{%
    \errmessage{(Inkscape) Transparency is used (non-zero) for the text in Inkscape, but the package 'transparent.sty' is not loaded}%
    \renewcommand\transparent[1]{}%
  }%
  \providecommand\rotatebox[2]{#2}%
  \newcommand*\fsize{\dimexpr\f@size pt\relax}%
  \newcommand*\lineheight[1]{\fontsize{\fsize}{#1\fsize}\selectfont}%
  \ifx\svgwidth\undefined%
    \setlength{\unitlength}{18.38020952bp}%
    \ifx\svgscale\undefined%
      \relax%
    \else%
      \setlength{\unitlength}{\unitlength * \real{\svgscale}}%
    \fi%
  \else%
    \setlength{\unitlength}{\svgwidth}%
  \fi%
  \global\let\svgwidth\undefined%
  \global\let\svgscale\undefined%
  \makeatother%
  \begin{picture}(1,1.93944322)%
    \lineheight{1}%
    \setlength\tabcolsep{0pt}%
    \put(0,0){\includegraphics[width=\unitlength,page=1]{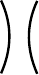}}%
  \end{picture}%
\endgroup%
} \; \rangle_\infty = \; - \; \langle \; \raisebox{-0.7\baselineskip}{\def\svgwidth{40pt}
\begingroup%
  \makeatletter%
  \providecommand\color[2][]{%
    \errmessage{(Inkscape) Color is used for the text in Inkscape, but the package 'color.sty' is not loaded}%
    \renewcommand\color[2][]{}%
  }%
  \providecommand\transparent[1]{%
    \errmessage{(Inkscape) Transparency is used (non-zero) for the text in Inkscape, but the package 'transparent.sty' is not loaded}%
    \renewcommand\transparent[1]{}%
  }%
  \providecommand\rotatebox[2]{#2}%
  \newcommand*\fsize{\dimexpr\f@size pt\relax}%
  \newcommand*\lineheight[1]{\fontsize{\fsize}{#1\fsize}\selectfont}%
  \ifx\svgwidth\undefined%
    \setlength{\unitlength}{66.01123985bp}%
    \ifx\svgscale\undefined%
      \relax%
    \else%
      \setlength{\unitlength}{\unitlength * \real{\svgscale}}%
    \fi%
  \else%
    \setlength{\unitlength}{\svgwidth}%
  \fi%
  \global\let\svgwidth\undefined%
  \global\let\svgscale\undefined%
  \makeatother%
  \begin{picture}(1,0.5368989)%
    \lineheight{1}%
    \setlength\tabcolsep{0pt}%
    \put(0,0){\includegraphics[width=\unitlength,page=1]{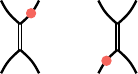}}%
    \put(0.29183112,0.40649264){\color[rgb]{0.96470588,0.4,0.36470588}\makebox(0,0)[lt]{\lineheight{1.25}\smash{\begin{tabular}[t]{l}$1$\end{tabular}}}}%
    \put(0.60716348,0.07308317){\color[rgb]{0.96470588,0.4,0.36470588}\makebox(0,0)[lt]{\lineheight{1.25}\smash{\begin{tabular}[t]{l}$1$\end{tabular}}}}%
    \put(0.45877338,0.24022888){\color[rgb]{0,0,0}\makebox(0,0)[lt]{\lineheight{1.25}\smash{\begin{tabular}[t]{l}$-$\end{tabular}}}}%
  \end{picture}%
\endgroup%
} \; \rangle_\infty.
\]
Let $c$ be a coloring of $\raisebox{-0.7\baselineskip}{\def\svgwidth{11pt}}$ (see \cref{eq:colorings}) and call $i$ and $j$ the pigments on the left and right strand respectively (by definition of $c$, we have $i \ne j$). Let $\langle \; \raisebox{-0.7\baselineskip}{\def\svgwidth{11pt}} \; , c \rangle_\infty = \frac{P}{Q}$. The coloring $c$ induces the coloring $c'$ on $\raisebox{-0.7\baselineskip}{\def\svgwidth{11pt}}$, and possibly (depending on the shape of the rest of the graph) colorings $c''$ and $c'''$.

\begin{equation} \label{eq:colorings}
{\scriptsize \raisebox{-2.3\baselineskip}{\def\svgwidth{170pt}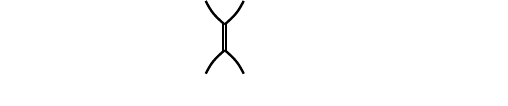}}
\end{equation}

The following computations prove the statement (the polynomials $P'$ and $Q'$ encode the $\infty$-evaluation at $c''$ on the portion of the graph that is outside of the tangle $\raisebox{-0.7\baselineskip}{\def\svgwidth{11pt}}$):
\begin{gather*}
\langle \; \raisebox{-0.7\baselineskip}{\def\svgwidth{40pt}} \; , c' \rangle_\infty = \frac{P\cdot x_j}{Q \cdot (x_i-x_j)} - \frac{P\cdot x_i}{Q \cdot (x_i-x_j)} = - \frac{P}{Q} \\
\langle \; \raisebox{-0.7\baselineskip}{\def\svgwidth{40pt}} \; , c'' \rangle_\infty = \frac{P'\cdot x_i}{Q' \cdot (x_j-x_i)} - \frac{P'\cdot x_i}{Q' \cdot (x_j-x_i)} = 0, \\
\langle \; \raisebox{-0.7\baselineskip}{\def\svgwidth{40pt}} \; , c''' \rangle_\infty = 0.
\end{gather*}

\end{proof}

Consider the positive braid $\beta^+$ obtained from $\beta$ by turning all crossings to positive ones. By construction, the differentials of $C_*(\widehat{\beta}^+)$ are now only \emph{zip} maps. The program first finds the pairing matrices for $\beta^+$. This is done by computing transposes of the pairing matrices of \emph{unzip} maps, using \cref{lem:positivecx} (functions {\tt compute\_all\_diff\_unsmoothing} and {\tt all\_diff}). In order to recover the complex and pairing matrices for $\beta$ one then simply needs to reorder the vertices of the hypercube of resolutions (this is encoded in {\tt res\_by\_hom\_deg} in {\tt homology}) and take transposes of some of the pairing matrices ({\tt piece\_of\_matrix} in the function {\tt piece\_of\_real\_diff}), according to the signs of the crossings of $\beta$. Finally, using \cref{lem:diffpairing}, the program computes the differentials: this is done in {\tt inverse\_basis\_matrix}, {\tt all\_inverse\_basis\_matrices} and {\tt piece\_of\_real\_diff}.

\begin{remark}
Other than computing the differentials, \cref{lem:diffpairing} has a wider function. It allows one to express any element of $\mathscr{S}_1(\widehat{\beta}_v)$ as a linear combination of elements of the d.u.r.\ basis. This is the purpose of the function {\tt el\_as\_basis\_comb}.
\end{remark}

\subsection{Taking homology}
\label{sec:computing-homology}

Over a field, computing the homology of a finite
dimensional chain complex boils down to computing the ranks of the
differentials. In our context, since we consider a complex of ($q$)-graded
vector spaces, the computation can be split according to $q$-degree.

Over $\Q$, the rank is computed\footnote{Over the rationals, in principle it would be enough to compute the rank of the pairing matrices (by \cref{lem:diffpairing}). This however is no longer true in positive characteristic.} using the package {\tt torch}, which is more efficient than {\tt numpy} when dealing with very large matrices. This is done in the functions {\tt compute\_rank} and {\tt homology}.

Since the d.u.r.\ set is a basis of state spaces over any $R$, this allows in principle to compute homology over $\Z$ and over any field. As we shall see in \cref{ex:notinvariant}, we can disprove
invariance of such theories using our program. Over the integers, the rank of the differential provides enough information to compute the torsion free part of the homology, but does not see the torsion part. Our program, however, can compute the homology over finite fields. If one wishes to compute in characteristic $3$, for instance, one has to insert {\tt -p 3} between the name of the program and the braid. In this setting, the rank is computed using the software SpaSM developed by Bouillaguet \cite{spasm}.

\subsection{Some computations}

The example below shows that the symmetric $\mathfrak{gl}_n$-homology is not a link invariant over the integers.

\begin{example} \label{ex:notinvariant}
Consider the three braids $\sigma_1$, $\sigma_1\sigma_2^{-1}$ and $\sigma_1\sigma_2$. Their closures all represent the unknot, and they are obtained from the trivial diagram by performing respectively one R1$_+$, one R1$_+$ and one R1$_-$, and two R1$_+$ (here R1$_+$ (resp.\ R1$_-$) stands for a positive (resp.\ negative) Reidemeister I move). Over $\mathbb{F}_3$, our program yields three distinct results for the three braids above, showing that the symmetric $\mathfrak{gl}_1$-homology is not a link invariant over fields of positive characteristic. It follows that it is also not an invariant of links over $\Z$.
\end{example}

Using \texttt{foam-gl1.py}, we have computed the non-equivariant symmetric $\mathfrak{gl}_1$-homology of almost all prime knots with up to 10 crossings. The outputs of the computations are listed in the appendix. These results coincide with the reduced triply graded homology computations made by Nakagane--Sano \cite{computedHOMFLY}. Based on this, we formulate the following conjecture.

\begin{conjecture}
The total rank of the symmetric $\mathfrak{gl}_1$-homology is equal to the total rank of the reduced triply graded homology.   
\end{conjecture}

\subsection{Room for improvement}
\label{sec:improvement}

The program is based on a rather naive approach and is therefore far
from being fast and optimal. On an ordinary computer, it is efficient for braid length up to 12 and braid index $\leq 6$. Among all kinds of possible improvements
let us list a few things:

\begin{itemize}
\item \emph{Gaussian elimination:} Computing the rank of large
  matrices takes a lot of RAM resources. It would be
  far more efficient to proceed by Gaussian elimination as explained
  by Bar-Natan \cite{BN2} and implemented, for instance, by Lewark \cite{khoca}.
\item Many of the braids we consider feature powers of generators. A factor
  $\sigma_i^j$ contributes as a $|j|$-dimensional hypercube. However,
  we know \cite{Krasner} that, in this case, the complex can be simplified.
\item The program is entirely written in Python which is by essence
  not very fast. However its critical parts (such as computation of ranks) are
  implemented using the libraries NumPy and Torch, which are entirely coded in
  \texttt{C} and therefore do not suffer too much from the slowness
  of Python. The part which computes the pairing matrices relies entirely on
  Python and could certainly be substantially faster if rewritten in \texttt{C}. 
\end{itemize}

\appendix

\section{Computations for small knots} \label{sec:computations}

We list the Poincar\'e polynomial $P_{\mathfrak{gl}_1}$ of (non-equivariant) symmetric $\mathfrak{gl}_1$-homology over the rationals for all prime knots with up to 10 crossings, braid index at most 4 and braid length up to 12. These polynomials have been computed using the program \texttt{foam-gl1.py} \cite{foam-gl1}. On these knots, the correspondence between $P_{\mathfrak{gl}_1}=P_{\mathfrak{gl}_1}(t,q)$ and the Poincar\'e polynomial $P$ corresponding to triply graded homology, with the conventions of Nakagane--Sano \cite{computedHOMFLY}, is given by the following equality:
\begin{equation}
P_{\mathfrak{gl}_1}(t,q^{-1}) =  P(qt,qt,t).
\end{equation}

\begin{center}
\begin{adjustwidth}{-2cm}{}

\end{adjustwidth}
\end{center}

\bibliographystyle{myamsalpha}
\bibliography{References}

\newcommand{\etalchar}[1]{$^{#1}$}
\providecommand{\bysame}{\leavevmode\hbox to3em{\hrulefill}\thinspace}
\providecommand{\MR}{\relax\ifhmode\unskip\space\fi MR }
\providecommand{\MRhref}[2]{%
  \href{http://www.ams.org/mathscinet-getitem?mr=#1}{#2}
}
\providecommand{\href}[2]{#2}
\begin{thebibliography}{HMvdW{\etalchar{+}}20}

\bibitem[BN07]{BN2}
D.~Bar-Natan: \emph{Fast {K}hovanov homology computations}, J. Knot Theory Ramifications \textbf{16} (2007), no.~3, 243--255. \xox{arXiv}{math/0606318}.

\bibitem[BPRW22]{BPRW}
A.~Beliakova, K.~K. Putyra, L.-H. Robert, and E.~Wagner: \emph{A proof of {D}unfield--{G}ukov--{R}asmussen {C}onjecture}, 2022. \xox{arXiv}{2210.00878}.

\bibitem[BHMV95]{zbMATH00863709}
C.~Blanchet, N.~Habegger, G.~Masbaum, and P.~Vogel: \emph{Topological quantum field theories derived from the {Kauffman} bracket}, Topology \textbf{34} (1995), no.~4, 883--927.

\bibitem[Cau17]{zbMATH06791663}
S.~Cautis: \emph{Remarks on coloured triply graded link invariants}, Algebr. Geom. Topol. \textbf{17} (2017), no.~6, 3811--3836.

\bibitem[CL]{chalivingston}
J.~Cha and C.~Livingston: \emph{{KnotInfo: Table of Knot Invariants}}, \url{www.indiana.edu/~knotinfo}.  Retrieved 2021.

\bibitem[CK12]{cooperkrushkal}
B.~Cooper and V.~Krushkal: \emph{Categorification of the {J}ones-{W}enzl projectors}, Quantum Topol. \textbf{3} (2012), no.~2, 139--180.

\bibitem[HMvdW{\etalchar{+}}20]{harris2020array}
C.~R. Harris, K.~J. Millman, S.~J. van~der Walt, R.~Gommers, P.~Virtanen, D.~Cournapeau, E.~Wieser, J.~Taylor, S.~Berg, N.~J. Smith, R.~Kern, M.~Picus, S.~Hoyer, M.~H. van Kerkwijk, M.~Brett, A.~Haldane, J.~F. del R{\'{i}}o, M.~Wiebe, P.~Peterson, P.~G{\'{e}}rard-Marchant, K.~Sheppard, T.~Reddy, W.~Weckesser, H.~Abbasi, C.~Gohlke, and T.~E. Oliphant: \emph{Array programming with {NumPy}}, Nature \textbf{585} (2020), no.~7825, 357--362.

\bibitem[KR08a]{khr1}
M.~Khovanov and L.~Rozansky: \emph{Matrix factorizations and link homology}, Fund. Math. \textbf{199} (2008), no.~1, 1--91. \xox{arXiv}{math/0401268}.

\bibitem[KR08b]{khr2}
\bysame: \emph{Matrix factorizations and link homology {II}}, Geom. Topol. \textbf{12} (2008), 1387--1425.

\bibitem[Kho00]{KH1}
M.~Khovanov: \emph{A categorification of the {J}ones polynomial}, Duke Math. J. \textbf{101} (2000), no.~3, 359--426. \xox{arXiv}{math/9908171}.

\bibitem[Kra09]{Krasner}
D.~Krasner: \emph{A computation in {K}hovanov-{R}ozansky homology}, Fund. Math. \textbf{203} (2009), 75--95.

\bibitem[LL18]{khoca}
L.~Lewark and A.~Lobb: \emph{khoca}, 2018.  Computer program available from \url{https://github.com/LLewark/khoca}.

\bibitem[{Mac}15]{macdonald}
I.~G. {Macdonald}: \emph{{Symmetric functions and Hall polynomials}}, Oxford Classic Texts in the Physical Sciences. The Clarendon Press, Oxford University Press, New York, second edition. With contribution by A. V. Zelevinsky and a foreword by Richard Stanley, Reprint of the 2008 paperback edition [ MR1354144], 2015.

\bibitem[MSV09]{zbMATH05530200}
M.~Mackaay, M.~Sto{\v{s}}i{\'c}, and P.~Vaz: \emph{{{\(sl(N)\)}}-link homology {{\((N\geq 4)\)}} using foams and the {Kapustin}-{Li} formula}, Geom. Topol. \textbf{13} (2009), no.~2, 1075--1128.

\bibitem[MW18]{zbMATH06893251}
M.~Mackaay and B.~Webster: \emph{Categorified skew {Howe} duality and comparison of knot homologies}, Adv. Math. \textbf{330} (2018), 876--945.

\bibitem[Mar23]{foam-gl1}
L.~Marino: \emph{foam-gl1}, 2023.  Computer program available from \url{https://sites.google.com/view/laura--marino/foam-gl1}.

\bibitem[NS]{computedHOMFLY}
K.~Nakagane and T.~Sano: \emph{kr-calc.}, \url{https://github.com/taketo1024/kr-calc}.

\bibitem[NS21]{computHOMFLY}
\bysame: \emph{Computations of {H}{O}{M}{F}{L}{Y} homology}, 2021. \xox{arXiv}{2111.00388}.

\bibitem[PGC{\etalchar{+}}17]{torch}
A.~Paszke, S.~Gross, S.~Chintala, G.~Chanan, E.~Yang, Z.~DeVito, Z.~Lin, A.~Desmaison, L.~Antiga, and A.~Lerer: \emph{{P}y{T}orch}, 2017.  Software available from \url{https://pytorch.org/}.

\bibitem[QRSW21]{rootofunity}
Y.~Qi, L.-H. Robert, J.~Sussan, and E.~Wagner: \emph{A categorification of the colored {Jones} polynomial at a root of unity}, 2021. \xox{arXiv}{2111.13195}.

\bibitem[QR16]{QuefRose}
H.~Queffelec and D.~E.~V. Rose: \emph{The {{\(\mathfrak{sl}_n\)}} foam 2-category: a combinatorial formulation of {Khovanov}-{Rozansky} homology via categorical skew {Howe} duality}, Adv. Math. \textbf{302} (2016), 1251--1339.

\bibitem[QRS18]{QRS}
H.~Queffelec, D.~E.~V. Rose, and A.~Sartori: \emph{Annular evaluation and link homology}, 2018. \xox{arXiv}{1802.04131}.

\bibitem[Ras15]{ras4}
J.~Rasmussen: \emph{Some differentials on {K}hovanov-{R}ozansky homology}, Geom. Topol. \textbf{19} (2015), no.~6, 3031--3104. \xox{arXiv}{math/0607544}.

\bibitem[RW20a]{zbMATH07305663}
L.-H. Robert and E.~Wagner: \emph{A closed formula for the evaluation of foams}, Quantum Topol. \textbf{11} (2020), no.~3, 411--487.

\bibitem[RW20b]{zbMATH07206869}
\bysame: \emph{Symmetric {Khovanov}-{Rozansky} link homologies}, J. {\'E}c. Polytech., Math. \textbf{7} (2020), 573--651.

\bibitem[RW22]{zbMATH07632766}
\bysame: \emph{A quantum categorification of the {Alexander} polynomial}, Geom. Topol. \textbf{26} (2022), no.~5, 1985--2064.

\bibitem[Shu03]{khoho}
A.~Shumakovitch: \emph{Kho{H}o}, 2003.  Computer program available from \url{https://github.com/AShumakovitch}.

\bibitem[{The}17]{spasm}
{The SpaSM group}: \emph{{SpaSM}: a sparse direct solver modulo $p$}, v1.2 ed., 2017. \url{http://github.com/cbouilla/spasm}.

\bibitem[Web]{computedW}
B.~Webster: \emph{Kr.m2}, \url{http://katlas.math.toronto.edu/wiki/User:Ben/KRhomology}.

\bibitem[Wed19]{zbMATH07096528}
P.~Wedrich: \emph{Exponential growth of colored {HOMFLY}-{PT} homology}, Adv. Math. \textbf{353} (2019), 471--525.

\end{thebibliography}
\end{document}